\definecolor{lightblue}{rgb}{0.8,0.8,1}
\numberwithin{equation}{section}
\numberwithin{figure}{section}
\newtheoremstyle{italicised}
        {\topsep}{\topsep}  
        {\itshape}  
        {}  
        {\bfseries}  
        {}  
        {1ex}  
        {}  
\theoremstyle{italicised}
\newtheorem{thm}{Theorem}[section]
\newtheorem{lem}[thm]{Lemma}
\newtheorem{prop}[thm]{Proposition}
\newtheorem{coro}[thm]{Corollary}
\newtheorem{athm}{Theorem}
\newtheorem{acoro}[athm]{Corollary}
\newtheoremstyle{upright}
        {\topsep}{\topsep}  
        {\upshape}  
        {}  
        {\bfseries}  
        {}  
        {1ex}  
        {}  
\theoremstyle{upright}
\newtheorem{defn}[thm]{Definition}
\newtheorem{rmk}[thm]{Remark}
\newtheorem{eg}[thm]{Example}
\newtheorem{notation}[thm]{Notation}
\newtheorem{assumption}[thm]{Assumption}
\newtheoremstyle{italicised-restate}
        {\topsep}{\topsep}  
        {\itshape}  
        {}  
        {\bfseries}  
        {}  
        {1ex}  
        {\thmname{#1}\thmnote{ \bfseries #3}}  
\theoremstyle{italicised-restate}
\renewcommand*{\@seccntformat}[1]{\upshape\csname the#1\endcsname.\hspace{1ex}}
\renewcommand*{\section}{\@startsection{section}{1}{\z@}%
  {2.5ex \@plus 1ex \@minus 0.2ex}%
  {1.5ex \@plus 0.2ex}%
  {\large\bfseries}}
\renewcommand*{\subsection}{\@startsection{subsection}{2}{\z@}%
  {2.5ex \@plus 1ex \@minus 0.2ex}%
  {-1.5ex \@plus -0.2ex}%
  {\normalfont\normalsize\bfseries}}
\renewcommand*{\subsubsection}{\@startsection{subsubsection}{3}{\z@}%
  {2.5ex \@plus 1ex \@minus 0.2ex}%
  {-1.5ex \@plus -0.2ex}%
  {\normalfont\normalsize\bfseries}}
\renewcommand*{\paragraph}{\@startsection{paragraph}{4}{\z@}%
  {2.5ex \@plus 1ex \@minus 0.2ex}%
  {-1.5ex \@plus -0.2ex}%
  {\normalfont\normalsize\bfseries}}
\renewcommand*{\subparagraph}{\@startsection{subparagraph}{5}{\z@}%
  {2.5ex \@plus 1ex \@minus 0.2ex}%
  {-1.5ex \@plus -0.2ex}%
  {\normalfont\normalsize\slshape}}
\newcommand{\cB}{\mathcal{B}}
\newcommand{\cC}{\mathcal{C}}
\newcommand{\cG}{\mathcal{G}}
\newcommand{\cI}{\mathcal{I}}
\newcommand{\cL}{\mathcal{L}}
\newcommand{\cP}{\mathcal{P}}
\newcommand{\bC}{\mathbb{C}}
\newcommand{\bF}{\mathbb{F}}
\newcommand{\bN}{\mathbb{N}}
\newcommand{\bQ}{\mathbb{Q}}
\newcommand{\bR}{\mathbb{R}}
\newcommand{\bZ}{\mathbb{Z}}
\newcommand{\Emb}{\mathrm{Emb}}
\newcommand{\hconn}{\ensuremath{h\mathrm{conn}}}
\newcommand{\image}{\ensuremath{\mathrm{im}}}
\newcommand{\degree}{\ensuremath{\mathrm{deg}}}
\newcommand{\kernel}{\mathrm{ker}}
\newcommand{\coker}{\mathrm{coker}}
\newcommand{\Aut}{\mathrm{Aut}}
\newcommand{\Diff}{\mathrm{Diff}}
\newcommand{\undn}{\ensuremath{\underline{n}}}
\newcommand{\mbar}{\ensuremath{{\,\,\overline{\!\! M\!}\,}}}
\newcommand{\nbar}{\ensuremath{{\,\,\overline{\!\! N}}}}
\renewcommand{\geq}{\geqslant}
\renewcommand{\leq}{\leqslant}
\renewcommand{\footnoterule}{%
  \kern -3pt
  \hrule width \textwidth height 0.4pt
  \kern 2.6pt
}
\newcommand{\incl}[3][right]%
{%
\draw[<-,>=#1 hook] #2 to ($ #2!0.5!#3 $);
\draw[->,>=stealth'] ($ #2!0.5!#3 $) to #3;%
}
\newcommand{\inclusion}[5][right]%
{%
\draw[<-,>=#1 hook] #4 to ($ #4!0.5!#5 $) node[#2,font=\small]{#3};
\draw[->,>=stealth'] ($ #4!0.5!#5 $) to #5;%
}
\newcommand{\emb}{\ensuremath{\hookrightarrow}}
\newcommand{\ab}{\ensuremath{\mathsf{Ab}}}
\newcommand{\scard}{\ensuremath{\left\lvert S\right\rvert}}
\newcommand{\rcard}{\ensuremath{\left\lvert R\right\rvert}}
\newcommand{\undnplusonescript}{\ensuremath{\underline{\raisebox{0pt}[0pt][0pt]{\ensuremath{\scriptstyle n\!+\!1}}}}}
\newcommand{\Se}{\ensuremath{\mathsf{Se}^{\mathsf{fin}}}}
\newcommand{\Setp}{\ensuremath{\mathsf{Set}_*^{\mathsf{fin}}}}
\newcommand{\cBf}{\ensuremath{\mathcal{B}_{\mathsf{f}}}}
\newcommand{\catr}{\ensuremath{\mathsf{Cat}_R}}
\newcommand{\catrr}{\ensuremath{\mathsf{Cat}^R}}
\newcommand{\topr}{\ensuremath{\mathsf{Top}_R}}
\newcommand{\toprr}{\ensuremath{\mathsf{Top}^R}}
\newcommand{\grrmod}{\ensuremath{\text{gr-}R\text{-mod}}}
\newcommand{\Ubeta}{\ensuremath{\boldsymbol{\mathcal{U}\beta}}}
\newcommand{\cf}{\textit{cf}.\ }
\newcommand{\Cf}{\textit{Cf}.\ }
\newenvironment{itemizeb}%
{\begin{compactitem}

}%
{\end{compactitem}}
\begin{document}
\title{\Large\bfseries Twisted homological stability for configuration spaces\vspace{-1ex}}
\author{\small Martin Palmer\quad $/\!\!/$\quad 19 December 2017\vspace{-1ex}}
\date{}
\maketitle
{
\makeatletter
\renewcommand*{\BHFN@OldMakefntext}{}
\makeatother
\footnotetext{2010 \textit{Mathematics Subject Classification}: Primary 55R80; secondary 57N65.}
\footnotetext{\textit{Key words and phrases}: Configuration spaces, homological stability, polynomial twisted coefficients.}
\footnotetext{\textit{Address}: Mathematisches Institut der Universit{\"a}t Bonn, Endenicher Allee 60, 53115 Bonn, Germany}
\footnotetext{\textit{Email address}: \textsf{palmer@math.uni-bonn.de}}
\footnotetext{[---Also available at \href{https://mdp.ac/papers/ths-for-cs}{mdp.ac/papers/ths-for-cs}, where any addenda or informal related notes will also be posted.---]}
}
\begin{abstract}
Let $M$ be an open, connected manifold. A classical theorem of McDuff and Segal states that the sequence $\{C_n(M)\}$ of configuration spaces of $n$ unordered, distinct points in $M$ is \emph{homologically stable} with coefficients in $\bZ$ -- in each degree, the integral homology is eventually independent of $n$. The purpose of this paper is to prove that this phenomenon also holds for homology with twisted coefficients. We first define an appropriate notion of \emph{finite-degree twisted coefficient system} for $\{C_n(M)\}$ and then use a spectral sequence argument to deduce the result from the untwisted homological stability result of McDuff and Segal. The result and the methods are generalisations of those of Betley~\cite{Betley2002Twistedhomologyof} for the symmetric groups.
\end{abstract}
\tableofcontents


\section{Introduction}\label{sIntro}

For a pair of spaces $M$ and $X$, the \emph{configuration space of $n$ unordered points in $M$ with labels in $X$} is defined by
\[
C_n(M,X) \coloneqq (\mathrm{Emb}(n,M) \times X^n)/\Sigma_n.
\]
Here $n$ is the discrete space of cardinality $n$, so $\Emb(n,M)$ is the subspace of $M^n$ where no two points coincide. The symmetric group $\Sigma_n$ acts diagonally, permuting the points and the list of labels, so an element of $C_n(M,X)$ is a subset of $M$ of cardinality $n$, together with an element of $X$ ``attached'' to each point.

\begin{assumption}\label{Assumption}
Henceforth we assume that $M$ is an open, connected manifold with $\dim(M)\geq 2$, and that $X$ is a path-connected space. To be precise, by an \emph{open} manifold we mean a manifold with empty boundary, each of whose (path-)components is non-compact but paracompact.
\end{assumption}

Since $M$ is open, there are well-defined ``stabilisation maps'' $C_n(M,X)\to C_{n+1}(M,X)$, which we define precisely in \S\ref{ssTCS-stabmap} below. They are so called because the sequence of spaces $\{C_n(M,X)\}$ is homologically stable with respect to them:

\begin{thm}[\cite{Segal1973Configurationspacesand, McDuff1975Configurationspacesof, Segal1979topologyofspaces, Randal-Williams2013Homologicalstabilityunordered}]\label{tUntwistedStability}
Under the conditions on $M$ and $X$ assumed above, the map $C_n(M,X)\to C_{n+1}(M,X)$ induces an isomorphism on integral homology in degrees $*\leq \frac{n}{2}$, and is split-injective on homology in all degrees.
\end{thm}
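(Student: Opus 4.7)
The plan is to prove the isomorphism range by induction on $n$ via an augmented semi-simplicial resolution of $C_n(M,X)$ whose associated spectral sequence expresses the homology of $C_n(M,X)$ in terms of configurations of strictly fewer points. First I would nail down the stabilisation map precisely: since $M$ is open and connected, choose a properly embedded ray $\gamma\colon[0,\infty)\emb M$ going to infinity in some end of $M$, and define $s_n\colon C_n(M,X)\to C_{n+1}(M,X)$ by applying a compactly supported isotopy that pushes $\gamma$ along itself to make room near $\gamma(0)$, then inserting a new point with a fixed basepoint label $x_0\in X$. Since $M$ is connected and $X$ is path-connected, $s_n$ is well-defined up to homotopy, independent of the choices.

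Next I would construct the resolution $R_\bullet(n)\to C_n(M,X)$ whose space of $p$-simplices parametrises configurations $c\in C_n(M,X)$ together with an ordered $(p+1)$-tuple of pairwise-disjoint embedded arcs, each running from a distinct point of the underlying configuration $\lvert c\rvert$ to infinity in $M$ and meeting $\lvert c\rvert$ only at its endpoint. The $i$-th face map deletes the $i$-th arc and its endpoint from the configuration, and the augmentation forgets all arc data. The crucial geometric input is to show that the augmentation $\lVert R_\bullet(n)\rVert\to C_n(M,X)$ is approximately $n/2$-connected. This is proved by a general-position and disjoining argument, using $\dim M\geq 2$ to make arcs transverse to a given configuration and to each other, and using non-compactness of the components of $M$ to run the free ends of the arcs to infinity.

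Each $R_p(n)$ then deformation retracts onto $C_{n-p-1}(M,X)$ (the space of $(p+1)$-tuples of disjoint arcs to infinity being contractible once the remaining configuration is fixed), and one checks that under this identification the face maps correspond, up to homotopy, to iterated compositions of stabilisation maps $s_{n-p-1}$. The spectral sequence
\[
E^1_{p,q}=H_q\bigl(C_{n-p-1}(M,X);\bZ\bigr) \Longrightarrow H_{p+q}\bigl(\lVert R_\bullet(n)\rVert;\bZ\bigr),
\]
combined with the connectivity estimate and the inductive hypothesis, then yields the claimed isomorphism range for $s_n$ by a standard diagram chase comparing the spectral sequence for $n$ with that for $n+1$. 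Split injectivity of $s_n$ on homology in all degrees follows by a separate argument: either by Segal's scanning/group-completion theorem, which identifies $\bigsqcup_n C_n(M,X)$ with the components of an infinite loop space (so each $s_n$ is a retract up to homology of a loop-space inclusion), or by a direct transfer argument that factors $s_n$ through an auxiliary ``thickened'' configuration space.

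The main obstacle in this plan is the connectivity estimate for the arc-resolution $\lVert R_\bullet(n)\rVert\to C_n(M,X)$: given a family of configurations parametrised by a sphere, one must produce a coherent family of non-intersecting arcs to infinity from sufficiently many of the points, and this is where essentially all the manifold-topological content of the theorem resides. The two hypotheses on $M$ — openness (non-compactness of each component) and $\dim M\geq 2$ — enter precisely at this step, respectively to guarantee that arcs to infinity exist at all and that they can be disjoined from a positive-codimension obstacle.
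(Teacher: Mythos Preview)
The paper does not prove this theorem. It is cited from the literature (Segal, McDuff, Randal-Williams) and used as a black box in the proof of Theorem~\ref{tMain}: in the spectral sequence~\eqref{Emtsss} the map on $E^2$ pages is an isomorphism in a range precisely \emph{because} of Theorem~\ref{tUntwistedStability}. So there is no proof in the paper to compare your proposal against for the isomorphism-range statement. Your sketch is essentially the Randal-Williams approach from~\cite{Randal-Williams2013Homologicalstabilityunordered}, one of the references cited for the theorem, and as an outline it is accurate.

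For the split-injectivity claim, however, the paper \emph{does} contain a proof, in \S\ref{sInjectivity}, in fact in the greater generality of an arbitrary twisted coefficient system. That proof is quite different from either option you mention. It does not use scanning or group completion, nor a geometric transfer through a thickened configuration space. Instead it applies Dold's criterion (Lemma~\ref{lDold}): one constructs explicit chain maps $t_{k,n}$ by summing over all $k$-element subsets $S\subseteq\{1,\ldots,n\}$ the ``forget the points outside $S$'' maps on the universal cover, paired with the corresponding morphisms $\pi_{S,n}$ in $\cB(M,X)$ on coefficients. A direct combinatorial check of how these transfers interact with the stabilisation map then verifies Dold's condition~\eqref{eDold}. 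This is closer in spirit to your second option (a transfer), but is carried out entirely at the chain level and works uniformly for all twisted coefficient systems, not just constant $\bZ$.
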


\subsection{Twisted homological stability.}

Several other families of groups or spaces which are homologically stable are also known to have homological stability for \emph{twisted coefficients}. For example general linear groups \cite{Dwyer1980Twistedhomologicalstability}, mapping class groups of surfaces \cite{Ivanov1993homologystabilityTeichmuller, CohenMadsen2009Surfacesinbackground, Boldsen2012Improvedhomologicalstability} and the symmetric groups \cite{Betley2002Twistedhomologyof} are known to satisfy this phenomenon. A machine for proving twisted homological stability for many natural families of groups is constructed in \cite{Randal-WilliamsWahl2017Homologicalstabilityautomorphism}, and in particular covers the cases of mapping class groups of non-orientable surfaces and orientable $3$-manifolds.

The minimum data required in order to pose the question of twisted homological stability for a sequence of based, path-connected spaces $\{Y_n\}$ is a functor $\pi_1(\{Y_n\}) \to \mathsf{Ab}$, where the source is the category (groupoid) where the objects are the natural numbers, all morphisms are automorphisms and $\mathrm{Aut}(n) = \pi_1(Y_n)$. In other words, this is just a choice of $\pi_1(Y_n)$-module for each $n$. There is of course no chance of stability with respect to such a general ``twisted coefficient system'', as the $\pi_1(Y_n)$-modules for differing $n$ may be completely unrelated.

To obtain a notion of \emph{twisted coefficient system} with a chance of stability, one needs to add some (non-endo)morphisms to $\pi_1(\{Y_n\})$ and require that the functor from this new source category to $\mathsf{Ab}$ satisfy some finiteness conditions defined in terms of the new morphisms. The correct way to do this depends on the particular context one is working in (although a very general context for classifying spaces of discrete groups is introduced in \cite{Randal-WilliamsWahl2017Homologicalstabilityautomorphism}).

In \S\S\ref{sTCS},\ref{sHeightDegree} below we will define a \emph{twisted coefficient system of degree $d$} for the sequence $\{ C_n(M,X) \}$ to be a functor from a certain category $\cB(M,X)$ to $\mathsf{Ab}$ satisfying a certain finiteness condition. To state the main result, it is enough to mention that it includes the data of a $\pi_1 C_n(M,X)$-module $T_n$ for each $n$, and that the stabilisation map induces a natural map

\begin{equation}\label{eStabMapOnTwistedHomology}
H_*(C_n(M,X);T_n) \longrightarrow H_*(C_{n+1}(M,X);T_{n+1}).
\end{equation}

The main result of this paper is the following:

\begin{athm}\label{tMain}
Under Assumption \ref{Assumption}, if $T$ is a twisted coefficient system for $\{C_n(M,X)\}$ of degree $d$, then the map \eqref{eStabMapOnTwistedHomology} is an isomorphism in degrees $*\leq\frac{n-d}{2}$, and is split-injective in all degrees.
\end{athm}

This is a generalisation of \cite[Theorem 4.3]{Betley2002Twistedhomologyof}, where twisted homological stability is proved for the symmetric groups $\{ \Sigma_n \}$, corresponding to the case $M=\bR^\infty$ and $X=*$. In fact, we also slightly strengthen Betley's result in the case of the symmetric groups. The category $\cB(\bR^\infty,*)$ is equivalent to the category $\text{FI}\sharp$ of finite sets and partially-defined injections. This is a subcategory of the category $\Gamma$ of finite pointed sets (viewed as the category of finite sets and partially-defined functions). Betley's result is stated for functors $T \colon \Gamma \to \ab$, whereas our result only requires $T$ to be defined on the subcategory $\text{FI}\sharp \subset \Gamma$.

\begin{rmk}[\emph{Split-injectivity}]
The split-injectivity statement of this theorem is fairly easy, and has essentially the same proof as in the untwisted case. It is proved separately in \S\ref{sInjectivity}, and its proof does not depend on the twisted coefficient system being of finite degree -- this assumption is only required for surjectivity in the stable range.
\end{rmk}

\begin{rmk}[\emph{When $\cdot 2$ is invertible}]\label{rImprovedRanges}
If $T\colon \cB(M,X)\to\ab$ is a twisted coefficient system of $\bZ[\frac12]$-modules, i.e.\ its image lies in the subcategory $\bZ[\frac12]\text{-}\mathsf{mod}$ of $\mathsf{Ab}$, then the stability range in Theorem \ref{tMain} can be improved to $*\leq n-d$, as long as $M$ is at least $3$-dimensional. When $M$ is a surface, a similar improvement is possible if $T$ is a \emph{rational} twisted coefficient system, i.e.\ its image lies in the subcategory $\mathsf{Vect}_{\bQ}$ of $\mathsf{Ab}$. The improved range in this case is $*\leq n-d$ when $M$ is non-orientable and $*<n-d$ when $M$ is orientable. This uses the improved homological stability ranges, for untwisted coefficients, obtained in \cite{Church2012Homologicalstabilityconfiguration, Randal-Williams2013Homologicalstabilityunordered, KupersMiller2015Improvedhomologicalstability, Knudsen2014Bettinumbersand}. See Remark \ref{rRationalRange} after the proof of Theorem \ref{tMain} in \S\ref{sProof}.
\end{rmk}

\begin{rmk}[\emph{Related results}]\label{rRelatedResults}
We summarise here some related twisted homological stability results that are not included in the statement of Theorem \ref{tMain}.

Theorem D of \cite{Randal-WilliamsWahl2017Homologicalstabilityautomorphism} proves homological stability for the braid groups $\beta_n$ with coefficients in any functor $\Ubeta \to \ab$ of finite degree, where $\Ubeta$ is a certain category with the braid groups as its automorphism groups. There is a functor $\Ubeta \to \cB(\bR^2,*)$, and precomposing with this functor preserves degree, so this extends Theorem \ref{tMain} in the case of braid groups $(M,X)=(\bR^2,*)$. (We note however that the statement in Theorem \ref{tMain} about split-injectivity in all degrees is not recovered by their theorem.) For example, the unreduced Burau representations fit into their setting. They also recover a result of Church and Farb \cite[Corollary 4.4]{ChurchFarb2013Representationtheoryand}, who prove twisted homological stability for the braid groups with coefficients in certain finite degree functors $\Ubeta \to \text{FI} \to \ab$ built out of irreducible representations of the symmetric groups. Chen \cite{Chen2017Homologybraidgroups} computes explicitly the homology $H_*(\beta_n;\text{Bur}^{\text{r}}_n)$ with coefficients in the reduced Burau representations over $\bC$, and directly reads off stability from his calculations. The reduced and unreduced Burau representations fit into a much more general family of braid group representations, called the \emph{Lawrence-Krammer-Bigelow representations}, which are discussed briefly in \S\ref{ss:examples-braid-groups} below.

In \cite[Theorem 3.4.1]{SamSnowden2014RepresentationscategoriesG}, Sam and Snowden prove that the sequence of groups $G \wr \Sigma_n$ is homologically stable with coefficients in any finite degree functor $\text{FI}_G \to R\text{-mod}$ if $G$ is polycyclic-by-finite. This extends Theorem \ref{tMain} in the case $(M,X)=(\bR^\infty,BG)$, since $\text{FI}_G$ embeds as a subcategory of $\cB(\bR^\infty,BG)$, and precomposition by this embedding preserves degree. Their methods are quite different to those of \cite{Randal-WilliamsWahl2017Homologicalstabilityautomorphism} but are in fact more analogous to ours, in that we both proceed by deducing twisted homological stability from untwisted homological stability. (They use methods of \cite{SamSnowden2017Groebnermethodsrepresentations} to prove that the category of representations of $\text{FI}_G$ is Noetherian, and then apply Theorem 4.2 of \cite{PutmanSam2014Representationstabilityand} and the known stability of $G \wr \Sigma_n$ with constant coefficients to deduce twisted stability.) Twisted homological stability with finite degree coefficients is in fact true for $G \wr \Sigma_n$ (and indeed also $G \wr \beta_n$) for any group $G$, by Theorem D of \cite{Randal-WilliamsWahl2017Homologicalstabilityautomorphism}.

Recently, Krannich \cite{Krannich2017Homologicalstabilitytopological} has extended the techniques of \cite{Randal-WilliamsWahl2017Homologicalstabilityautomorphism} to a topological setting, where one begins with an $\bN$-graded $E_1$-module over an $E_2$-algebra. Considering $C(\bR^2) = \coprod_n C_n(\bR^2)$ as a module over itself, he recovers Theorem D of \cite{Randal-WilliamsWahl2017Homologicalstabilityautomorphism} for the braid groups. Moreover, if $M$ is an open, connected $d$-manifold, considering $C(M,X)$ as a module over $C(\bR^d,X)$, he proves twisted homological stability for the configuration spaces $C_n(M,X)$ with coefficients in any finite degree functor defined on an analogue $\cC^X(M)$ of $\Ubeta$. An earlier version of this paper contained a conjecture about extending Theorem \ref{tMain} to coefficient systems defined only on a certain subcategory of $\cB(M,X)$ (see section \S\ref{ssDegree} for the definition of this subcategory). This has now been confirmed by the results of Krannich: there is a functor $\cC^X(M) \to \cB(M,X)$ whose image is the subcategory of the conjecture, and precomposition by this functor preserves the degree of coefficient systems.

Another family of coefficient systems (different from those appearing in Theorem \ref{tMain}) with respect to which twisted homological stability is known, is \emph{abelian coefficient systems}. See \S 5.6.2 of \cite{Randal-WilliamsWahl2017Homologicalstabilityautomorphism} for the case of the braid groups and Theorem D(i) of \cite{Krannich2017Homologicalstabilitytopological} for configuration spaces in general. A special case of the latter theorem (homological stability for configuration spaces with the abelian twisted coefficients $\bZ[\bZ/2]$) was proved earlier by the author in \cite{Palmer2013Homologicalstabilityoriented}.
\end{rmk}

\subsection{Stable twisted homology.}

After establishing homological stability for a sequence of spaces, the natural next step is to compute its limiting (or ``stable'') homology. For configuration spaces, and with constant coefficients, the answer is given by \cite{Segal1973Configurationspacesand} and \cite{McDuff1975Configurationspacesof}. It is also known for some particular non-constant twisted coefficient systems in the case of the braid groups: the reduced Burau representations \cite{Chen2017Homologybraidgroups} and the reduced and unreduced Coxeter representations \cite[\S I.5]{Vassiliev1992Complementsdiscriminantssmooth}. The latter computation is also recovered and extended in work in progress of Arthur Souli{\'e}. As far as the author is aware, no other computations yet exist for the stable homology of configuration spaces with non-constant twisted coefficients.

There is a general method for computing the stable twisted homology of a sequence of groups, introduced by Djament and Vespa \cite{DjamentVespa2010Surlhomologiedes, DjamentVespa2015Surlhomologiedes} and used by them for orthogonal and symplectic groups, and $\mathrm{Aut}(F_n)$. This may be adaptable to surface braid groups, but it is less likely to be applicable for configuration spaces on higher-dimensional manifolds, since these are not aspherical. Randal-Williams \cite{Randal-Williams2016Cohomologyautomorphismgroups} has a different, more topological approach to computing stable twisted homology, which he has applied to $\mathrm{Aut}(F_n)$ and to mapping class groups of surfaces, and which may be more easily adaptable to configuration spaces.

\subsection{Corollaries.}

Two special cases of Theorem \ref{tMain} are as follows. For the first, fix a principal ideal domain $R$ and a path-connected based space $Z$ with $H_*(Z;R)$ flat over $R$ in all degrees. For example we could take $R$ to be a field, or we could take $R=\bZ$ and assume that the integral homology of $Z$ is torsion-free. Also choose non-negative integers $q,h$ and suppose that $\widetilde{H}_*(Z;R)=0$ in the range $*\leq h$. The homology group $H_q(Z^n;R)$ is a $\bZ[\Sigma_n]$-module given by permuting the factors of $Z^n$, and hence also a $\bZ[\pi_1(C_n(M,X))]$-module via the projection $\pi_1(C_n(M,X)) \to \Sigma_n$.

\begin{acoro}\label{cHomology}
There are isomorphisms
\[
H_*\bigl(C_n(M,X);H_q(Z^n;R)\bigr) \;\cong\; H_*\bigl(C_{n+1}(M,X);H_q(Z^{n+1};R)\bigr)
\]
in the range $*\leq \frac12 \bigl( n-\bigl\lfloor \frac{q}{h+1} \bigr\rfloor \bigr)$. If we take $R=\bQ$, or $R$ is a ring in which $2$ is invertible and $M$ is at least $3$-dimensional, then this holds in the larger range $*\leq n-\bigl\lfloor \frac{q}{h+1} \bigr\rfloor$ \textup{(}except in the case where $M$ is an orientable surface, in which case the larger range is $*< n-\bigl\lfloor \frac{q}{h+1} \bigr\rfloor$\textup{)}.
\end{acoro}

\begin{rmk}[\emph{Configurations with twisted labels}]\label{rTwistedLabels}
A consequence of Corollary \ref{cHomology} is (untwisted) homological stability for configuration spaces $C_n(M,\pi)$ with labels in a fibre bundle $\pi\colon E\to M$ with path-connected fibres. Here, $C_n(M,\pi) = \{ (e_1,\ldots,e_n)\in E^n \;|\; \pi(e_i) \neq \pi(e_j) \text{ for } i\neq j \}/\Sigma_n$, generalising the notion of configuration spaces with labels in a fixed space $X$. This uses the Serre spectral sequence for the fibre bundle $C_n(M,\pi) \to C_n(M)$ that forgets the labels, and which has $E^2$ page isomorphic to the twisted homology groups of $C_n(M)$ with coefficients in the homology groups of $F^n$, where $F$ is the typical fibre of $\pi$. Corollary \ref{cHomology} says that the stabilisation maps induce a map of spectral sequences which is an isomorphism in a range on the $E^2$ page, as long as we take field coefficients. One can then reconstruct an integral homological stability result from the fields $\bF_p$ and $\bQ$. This is proved in more detail in Appendix B of \cite{CanteroPalmer2015homologicalstabilityconfiguration} and also in Appendix~A of \cite{KupersMiller2014Encellattachments}.

We note that this result can alternatively be proved using a generalisation of the proof of \cite{Randal-Williams2013Homologicalstabilityunordered}, which is concerned with configuration spaces with labels in a fixed space. This alternative proof is also sketched in Appendix A of \cite{KupersMiller2014Encellattachments}. Moreover, homological stability for $C_n(M,\pi)$, with twisted as well as constant coefficients, is implied by the recent work \cite{Krannich2017Homologicalstabilitytopological} of Krannich (\cf Remark \ref{rRelatedResults}).
\end{rmk}

Now we describe a second special case of Theorem \ref{tMain}.
For an ordered partition $\mu = (\mu_1,\ldots,\mu_k)$ of $\lvert \mu \rvert = \mu_1 + \cdots + \mu_k$, denote by $\Sigma_\mu$ the product of symmetric groups $\Sigma_{\mu_1} \times \cdots \times \Sigma_{\mu_k}$, which is naturally a subgroup of $\Sigma_{\lvert\mu\rvert}$. Fix an ordered partition $\lambda$, and assume that $n\geq \lvert \lambda \rvert$, so that there is an induced ordered partition $\lambda[n] \coloneqq (n-\lvert\lambda\rvert,\lambda_1,\ldots,\lambda_k)$ of $n$. Then $\Sigma_n/\Sigma_{\lambda[n]}$ is a (transitive) $\Sigma_n$-set. If $R$ is a ring, then $R[\Sigma_n/\Sigma_{\lambda[n]}]$ is a $\pi_1(C_n(M,X))$-module via the projection $\pi_1(C_n(M,X)) \to \Sigma_n$.

\begin{acoro}\label{cPartitions}
There are isomorphisms
\[
H_*\bigl(C_n(M,X); R \bigl[ \Sigma_n/\Sigma_{\lambda[n]} \bigr] \bigr) \;\cong\; H_*\bigl(C_{n+1}(M,X); R \bigl[ \Sigma_{n+1}/\Sigma_{\lambda[n+1]} \bigr] \bigr)
\]
in the range $*\leq \frac12 \bigl( n-\lvert \lambda \rvert \bigr)$. If we take $R=\bQ$, or $R$ is a ring in which $2$ is invertible and $M$ is at least $3$-dimensional, then this holds in the larger range $*\leq n-\lvert \lambda \rvert$ \textup{(}except in the case where $M$ is an orientable surface, in which case the larger range is $*< n-\lvert \lambda \rvert$\textup{)}.
\end{acoro}

In particular this includes stability for coefficients in $\bZ[\Sigma_n/\Sigma_{n-k}]$ or in $\bZ[\Sigma_n/(\Sigma_k \times \Sigma_{n-k})]$ in the range $*\leq\frac{n-k}{2}$ by taking $\lambda$ to be $(1,\ldots,1)$ or $(k)$ respectively.

\begin{rmk}[\emph{Coloured configuration spaces}]\label{rColoured}
Corollary \ref{cPartitions} may in fact be deduced quickly from untwisted homological stability, as follows. First note that
\[
H_*\bigl(C_n(M,X); R \bigl[ \Sigma_n/\Sigma_{\lambda[n]} \bigr] \bigr) \;\cong\; H_*(C_{\lambda[n]}(M,X);R),
\]
where the \emph{coloured configuration space} $C_{\lambda[n]}(M,X)$ is defined to be the covering space of $C_n(M,X)$ with $\bigl\lvert \Sigma_n/\Sigma_{\lambda[n]} \bigr\rvert = \binom{n}{\lambda_1}\binom{n-\lambda_1}{\lambda_2} \cdots \binom{n-\lambda_1 -\cdots -\lambda_{k-1}}{\lambda_k}$ sheets, in which the $n$ points are coloured according to the partition $\lambda[n]$. There is a stabilisation map
\[
C_{\lambda[n]}(M,X) \longrightarrow C_{\lambda[n+1]}(M,X)
\]
given by adding a point of the first colour to a coloured configuration (similarly to the stabilisation map defined in Definition \ref{dStabilisationMap}). This commutes up to homotopy with the projections to $C_\lambda(M,X)$, which are fibre bundles, and the map of fibres is the ordinary stabilisation map $C_{n-\lvert \lambda \rvert}(M_{\lvert \lambda \rvert},X) \to C_{n+1-\lvert \lambda \rvert}(M_{\lvert \lambda \rvert},X)$, where $M_{\lvert \lambda \rvert}$ denotes the manifold $M$ with $\lvert \lambda \rvert$ points removed. The result then follows by applying the relative Serre spectral sequence associated to this map of fibre bundles over $C_\lambda(M,X)$.
\end{rmk}

\begin{rmk}[\emph{Representation stability}]\label{rRepresentationStability}
Write $F_n(M,X)$ for the configuration space of $n$ ordered, distinct points in $M$ labelled by $X$. This may also be written $C_{(1,1,\ldots,1)}(M,X)$ in the notation of the previous remark and is an $(n!)$-sheeted covering space of $C_n(M,X)$. The sequence of graded $\bQ[\Sigma_n]$-modules $H^*(F_n(M,X);\bQ)$ is \emph{representation stable}, a notion introduced in \cite{ChurchFarb2013Representationtheoryand} and first proved in this case by \cite{Church2012Homologicalstabilityconfiguration}.

There is an argument of S{\o}ren Galatius, involving only the elementary representation theory of the symmetric groups, that proves representation stability for $H^*(F_n(M,X);\bQ)$ using, as an input, twisted homological stability for $C_n(M,X)$ with coefficients in $\bQ[\Sigma_n/\Sigma_{\lambda[n]}]$, which is a special case of Corollary \ref{cPartitions} above. This suggests an underlying connection between representation stability and twisted homological stability. We note that representation stability for $H^*(F_n(M,X);\bQ)$ may also be deduced from twisted homological stability for $C_n(M,X)$ with respect to a different twisted coefficient system than the one considered in Corollary \ref{cPartitions}: see Corollary 5.17 of \cite{Krannich2017Homologicalstabilitytopological} for the details. The twisted coefficient system used in that case does not fit into the setting of Theorem~\ref{tMain}.
\end{rmk}

\paragraph*{A note on terminology.}
To keep our terminology from becoming ambiguous, we will always use the terms ``local coefficient system'' and ``twisted coefficient system'' as follows. For a space $Y$, a \emph{local coefficient system} for $Y$ will have its usual meaning as a bundle of abelian groups over $Y$, or a functor from the fundamental groupoid of $Y$ to $\ab$, or (when $Y$ is based and path-connected) a $\pi_1(Y)$-module. The phrase \emph{twisted coefficient system} will always be used in the sense of Definition \ref{dTCS} below; in particular it applies to a \emph{sequence} of spaces.

\paragraph*{Acknowledgements.}
The content of this paper appeared, in a slightly different form, as part of the author's PhD thesis in 2013, and he would like to thank his supervisor, Ulrike Tillmann, for her invaluable advice and guidance throughout his PhD. He would also like to thank many other people for enlightening discussions: Cristina Anghel and Christian Blanchet (for discussions about the Lawrence representations of the braid groups), Aur{\'e}lien Djament, S{\o}ren Galatius (for sharing his proof of representation stability for ordered configuration spaces, \cf Remark \ref{rRepresentationStability}), Manuel Krannich, Oscar Randal-Williams and Arthur Souli{\'e}. Additionally, he would like to thank the anonymous referee, as well as Nathalie Wahl, for very helpful remarks on and corrections to the earlier drafts of this paper.


\section{Twisted coefficient systems}\label{sTCS}

\subsection{Setup.}\label{ssTCS-setup}
First we fix some data. Recall from Assumption \ref{Assumption} that $M$ is an open, connected manifold of dimension at least $2$ and $X$ is a path-connected space. This assumption on $M$ means that we may pick a connected manifold $\mbar$ with non-empty boundary $\partial\mbar$ whose interior is $M$ (although we must allow $\partial\mbar$ to be non-compact in general). Also choose a basepoint $x_0$ for $X$. Choose a point $a\in \partial\mbar$, and let $U$ be a coordinate neighbourhood of $a$ with an identification $U\cong \bR^d_+ = \{ x\in \bR^d \;|\; x_1\geq 0 \}$ which sends $a$ to $0$. Also choose a self-embedding $e\colon \mbar\emb\mbar$ which is isotopic to the identity, is \emph{equal} to the identity outside $U$, and such that $e(a)\in M$ (i.e.\ in the interior of $\mbar$). Moreover, we choose an isotopy $I\colon e\simeq \mathrm{id}_{\mbar}$. We obtain a sequence of points in $M$ by defining
\[
a_1 \coloneqq e(a) \qquad\qquad a_n \coloneqq e(a_{n-1}) \text{ for } n\geq 2.
\]
The isotopy $I$ provides us with canonical paths $p_n\colon [0,1]\to M$ between $a_n$ and $a_{n+1}$.

\subsection{The configuration space and the stabilisation map.}\label{ssTCS-stabmap}
Recall that the configuration space of $n$ unordered points in $M$ with labels in $X$ is defined to be
\[
C_n(M,X) \coloneqq ((M^n \smallsetminus \Delta)\times X^n)/\Sigma_n = (\Emb(n,M)\times X^n)/\Sigma_n,
\]
where $\Delta = \{(p_1,\ldots,p_n)\in M^n \;|\; p_i = p_j \text{ for some } i\neq j\}$ is the so-called \emph{fat diagonal} of $M^n$, and the symmetric group $\Sigma_n$ acts diagonally, permuting the points of $M$ along with their labels in $X$. Thus a labelled configuration is an unordered set of ordered pairs in $M\times X$, generically denoted by $\{(p_1,x_1),\ldots,(p_n,x_n)\}$. When $X$ is a point we will also write $C_n(M) = C_n(M,X)$.

\begin{defn}\label{dStabilisationMap}
The \emph{stabilisation map} $s_n\colon C_n(M,X) \to C_{n+1}(M,X)$ is defined by
\[
\{(p_1,x_1),\ldots,(p_n,x_n)\} \;\mapsto\; \{(e(p_1),x_1),\ldots,(e(p_n),x_n),(a_1,x_0)\}.
\]
Essentially, the existing configuration is ``pushed'' further into the interior of the manifold by $e$, and the new configuration point $a_1$ added in the newly vacated space. Up to homotopy, the only ``extra data'' that this map depends on is the component of $\partial\mbar$ containing $a$.
\end{defn}

\subsection{Twisted coefficient systems.}\label{ssTCS-TCS}
We define the category $\cB(M,X)$ to have the non-negative integers as its objects, and a morphism $m \to n$ is a choice of $k\leq \mathrm{min}\{m,n\}$ and a path in $C_k(M,X)$ from a $k$-element subset of $\{(a_1,x_0),\ldots,(a_m,x_0)\}$ to a $k$-element subset of $\{(a_1,x_0),\ldots,(a_n,x_0)\}$ up to endpoint-preserving homotopy. The identity is given by $k=m=n$ and the constant path. Composition of two morphisms is given by concatenating paths and deleting configuration points for which the concatenated path is defined only half-way. For example (omitting the labels in $X$):
\begin{equation}\label{eComposition}
\centering
\begin{split}
\begin{tikzpicture}
[x=1mm,y=1mm]
\node (al1) at (0,0) [fill,circle,inner sep=1pt] {};
\node (al2) at (0,2) [fill,circle,inner sep=1pt] {};
\node (al3) at (0,4) [fill,circle,inner sep=1pt] {};
\node (ar1) at (10,0) [fill,circle,inner sep=1pt] {};
\node (ar2) at (10,2) [fill,circle,inner sep=1pt] {};
\node (ar3) at (10,4) [fill,circle,inner sep=1pt] {};
\node (ar4) at (10,6) [fill,circle,inner sep=1pt] {};
\node (ar5) at (10,8) [fill,circle,inner sep=1pt] {};
\draw (al1) .. controls (5,0) and (5,4) .. (ar3);
\draw[white,line width=1mm] (al3) .. controls (5,4) and (5,0) .. (ar1);
\draw (al3) .. controls (5,4) and (5,0) .. (ar1);
\draw[white,line width=1mm] (al2) .. controls (5,2) and (5,8) .. (ar5);
\draw (al2) .. controls (5,2) and (5,8) .. (ar5);
\node at (13,4) {$\circ$};
\begin{scope}[xshift=16mm]
\node (bl1) at (0,0) [fill,circle,inner sep=1pt] {};
\node (bl2) at (0,2) [fill,circle,inner sep=1pt] {};
\node (bl3) at (0,4) [fill,circle,inner sep=1pt] {};
\node (bl4) at (0,6) [fill,circle,inner sep=1pt] {};
\node (bl5) at (0,8) [fill,circle,inner sep=1pt] {};
\node (br1) at (10,0) [fill,circle,inner sep=1pt] {};
\node (br2) at (10,2) [fill,circle,inner sep=1pt] {};
\node (br3) at (10,4) [fill,circle,inner sep=1pt] {};
\node (br4) at (10,6) [fill,circle,inner sep=1pt] {};
\draw (bl1) .. controls (5,0) and (5,2) .. (br2);
\draw (bl2) .. controls (5,2) and (5,6) .. (br4);
\draw[white,line width=1mm] (bl5) .. controls (5,8) and (5,4) .. (br3);
\draw (bl5) .. controls (5,8) and (5,4) .. (br3);
\end{scope}
\node at (32,2.5) {$=$};
\begin{scope}[xshift=38mm]
\node (cl1) at (0,0) [fill,circle,inner sep=1pt] {};
\node (cl2) at (0,2) [fill,circle,inner sep=1pt] {};
\node (cl3) at (0,4) [fill,circle,inner sep=1pt] {};
\node (cr1) at (10,0) [fill,circle,inner sep=1pt] {};
\node (cr2) at (10,2) [fill,circle,inner sep=1pt] {};
\node (cr3) at (10,4) [fill,circle,inner sep=1pt] {};
\node (cr4) at (10,6) [fill,circle,inner sep=1pt] {};
\draw (cl3) .. controls (5,4) and (5,2) .. (cr2);
\draw[white,line width=1mm] (cl2) .. controls (5,2) and (5,4) .. (cr3);
\draw (cl2) .. controls (5,2) and (5,4) .. (cr3);
\end{scope}
\end{tikzpicture}
\end{split}
\end{equation}
When $X$ is a point we will also write $\cB(M) = \cB(M,X)$. This is the \emph{partial braid category on $M$}.

\begin{defn}\label{dTCS}
A twisted coefficient system, associated to the direct system of spaces $\{ C_n(M,X) \}$, is a functor from $\cB(M,X)$ to the category $\ab$ of abelian groups.
\end{defn}

\begin{rmk}\label{rmk:when-B-is-symmetric-etc}
If the manifold $M$ splits as $M \cong \bR \times M^\prime$, then there is a monoidal structure on $\cB(M,X)$ (depending on the choice of such a splitting), given, intuitively, by placing two braids (i.e.\ paths of configurations) side by side in the $\bR$ direction -- in the figure above it corresponds to stacking braids vertically. If $M^\prime$ splits further as $M^\prime \cong \bR \times M^{\prime\prime}$, then a choice of such a splitting induces a braiding for this monoidal structure. Moreover, if $M^{\prime\prime}$ splits again as $M^{\prime\prime} \cong \bR \times M^{\prime\prime\prime}$, then this braiding is symmetric.

However, this is not the key structure that we use. Instead, we use (a) an endofunctor $s$ on $\cB(M,X)$\footnote{\Cf the stabilisation map (Definition \ref{dStabilisationMap}).} and a natural transformation $\iota \colon \mathrm{id} \to s$,\footnote{\Cf the morphisms $\iota_n$ defined immediately below and illustrated in \eqref{eIota}.} which together lead to the notion of the \emph{degree} of a functor $\cB(M,X) \to \mathsf{Ab}$, and (b) the existence of morphisms in $\cB(M,X)$ that ``forget'' points. More formally, the latter says that $\cB(M,X)$ has a subcategory (consisting of all constant braids) isomorphic to $\cI$, which is the category with non-negative integers as objects and where $\cI(m,n)$ is the power set of $\{1,\ldots,\mathrm{min}(m,n)\}$, with composition given by intersection. This latter structure leads to the notion of the \emph{height} of a functor $\cB(M,X) \to \mathsf{Ab}$.

There is an interaction between the structures (a) and (b) and the monoidal structure (when it exists), which may also be used to define notions of \emph{degree} and \emph{height} for functors $\cB(M,X) \to \mathsf{Ab}$. See \S 2 (especially \S 2.3) and \S 3 (especially \S 3.12) of \cite{Palmer2017comparisontwistedcoefficient} for a discussion of this interaction. See also Remark \ref{rmk:comparision-to-RWW} below.
\end{rmk}

\paragraph*{Further structure.} We now explain how the stabilisation map induces a map between homology groups of configuration spaces twisted by a functor $T \colon \cB(M,X) \to \ab$.

For each $n$, take $\{(a_1,x_0),\ldots,(a_n,x_0)\}$ as the basepoint of $C_n(M,X)$. Then the automorphism group of the object $n$ of $\cB(M,X)$ is precisely the fundamental group $\pi_1 C_n(M,X)$. So if we are given a functor $T\colon \cB(M,X)\to\ab$ this induces an action of $\pi_1 C_n(M,X)$ on $T_n \coloneqq T(n)$, and we can define the local homology $H_*(C_n(M,X);T_n)$.

For every object $n$ of $\cB(M,X)$ there is a natural morphism $\iota_n \colon n \to n+1$ represented by the path in $C_n(M,X)$ from $\{ (a_1,x_0),\ldots,(a_n,x_0) \}$ to $\{ (a_2,x_0),\ldots,(a_{n+1},x_0) \}$ where each configuration point $a_i$ travels along the path $p_i$ (see \S\ref{ssTCS-setup}) and the labels $x_0$ stay constant. Schematically, this may be pictured as:

\begin{equation}\label{eIota}
\centering
\begin{split}
\begin{tikzpicture}
[x=1mm,y=1mm]
\node (al1) at (0,0) [fill,circle,inner sep=1pt] {};
\node (al2) at (0,2) [fill,circle,inner sep=1pt] {};
\node (al3) at (0,4) [fill,circle,inner sep=1pt] {};
\node (al4) at (0,6) [fill,circle,inner sep=1pt] {};
\node (al5) at (0,8) [fill,circle,inner sep=1pt] {};
\node (ar1) at (20,0) [fill,circle,inner sep=1pt] {};
\node (ar2) at (20,2) [fill,circle,inner sep=1pt] {};
\node (ar3) at (20,4) [fill,circle,inner sep=1pt] {};
\node (ar4) at (20,6) [fill,circle,inner sep=1pt] {};
\node (ar5) at (20,8) [fill,circle,inner sep=1pt] {};
\node (ar6) at (20,10) [fill,circle,inner sep=1pt] {};
\draw (al1) -- (ar2);
\draw (al2) -- (ar3);
\draw (al5) -- (ar6);
\node at (10,7) {$\vdots$};
\node at (0,0) [anchor=east,font=\scriptsize] {$a_1$};
\node at (0,2) [anchor=east,font=\scriptsize] {$a_2$};
\node at (0,8) [anchor=east,font=\scriptsize] {$a_n$};
\node at (20,0) [anchor=west,font=\scriptsize] {$a_1$};
\node at (20,2) [anchor=west,font=\scriptsize] {$a_2$};
\node at (20,8) [anchor=west,font=\scriptsize] {$a_n$};
\node at (20,10) [anchor=west,font=\scriptsize] {$a_{n+1}$};
\node at (10,1.5) [anchor=north,font=\scriptsize] {$x_0$};
\node at (10,8.5) [anchor=south,font=\scriptsize] {$x_0$};
\end{tikzpicture}
\end{split}
\end{equation}

For any $\gamma\in \pi_1 C_n(M,X) = \Aut_{\cB(M,X)}(n)$ it is easy to check that
\[
\iota_n \circ \gamma = (s_n)_*(\gamma) \circ \iota_n,
\]
so for any $T$ the map $T\iota_n\colon T_n\to T_{n+1}$ is equivariant with respect to the group homomorphism $(s_n)_*\colon \pi_1 C_n(M,X)\to \pi_1 C_{n+1}(M,X)$. Hence we have an induced map
\[
(s_n;T\iota_n)_* \colon H_*(C_n(M,X);T_n) \to H_*(C_{n+1}(M,X);T_{n+1}).
\]
This is the map \eqref{eStabMapOnTwistedHomology} which induces the isomorphism in Theorem \ref{tMain}.

\begin{notation}
From now on, by abuse of notation, we will denote the induced map $T\iota_n\colon T_n\to T_{n+1}$ also by $\iota_n\colon T_n\to T_{n+1}$. Similarly for the left-inverse $\pi_n \colon n+1 \to n$ of $\iota_n$ (represented by the reverse of the path that represents $\iota_n$; see \S\ref{ssDegree}): we denote its image under $T$ also by $\pi_n\colon T_{n+1}\to T_n$.
\end{notation}

\subsection{A special case.}\label{ssTCS-specialcase}
Let $X$ be a point and assume that $M$ is simply-connected and of dimension at least $3$. These conditions imply that $\pi_1 C_n(M) \cong \Sigma_n$, in other words a path in $C_n(M)$ from the basepoint $\{a_1,\ldots,a_n\}$ to itself is determined by the permutation it induces on the set $\{a_1,\ldots,a_n\}$. More generally, any morphism $m \to n$ in $\cB(M)$ is determined by the partially-defined injection $\{a_1,\ldots,a_m\} \dashrightarrow \{a_1,\ldots,a_n\}$ that it induces. Hence there is a canonical isomorphism of categories $\cB(M)\cong \Sigma$, where $\Sigma$ is the category defined as follows.

\begin{defn}\label{dSigma}
The category $\Sigma$ has objects $\{0,1,2,\ldots\}$, and a morphism from $m$ to $n$ in $\Sigma$ is a partially-defined injection $m\dashrightarrow n$. Composition is then composition of partially-defined functions (where the composite function is defined exactly where it is possible to define it). Note that $\Sigma$ is an \emph{inverse category}, i.e.\ every morphism $f$ has a unique morphism $g$ such that $fgf=f$ and $gfg=g$ (this seems to have been first defined in \cite{Kastl1979Inversecategories}; see also \S 2 of \cite{Linckelmann2013inversecategoriesand}). It is a subcategory of the category with objects $\{0,1,2,\ldots\}$ and morphisms \emph{all} partially-defined functions (not necessarily injective), which is precisely $\Gamma^{\mathrm{op}}$, a skeleton of the category $\mathsf{Set}_*^{\mathsf{fin}}$ of finite pointed sets. Partially-defined injections are also sometimes called partially-defined bijections. The category $\Sigma$ also has other names in the literature, including $\mathbf{finPInj}$ \cite{Heunen2009Categoricalquantummodels}, $\text{FI}\sharp$ \cite{ChurchEllenbergFarb2015FImodulesstability}, $\Theta$ \cite{CollinetDjamentGriffin2013Stabilitehomologiquepour} and $\widetilde{\Theta}$ \cite{DjamentVespa2013Foncteursfaiblementpolynomiaux}.
\end{defn}

In particular we have $\cB(\bR^\infty) \cong \Sigma$. Of course, $\bR^\infty$ is not a finite-dimensional manifold, as was assumed of $M$, but the definitions make sense for arbitrary spaces $M$ and $X$, and $C_n(\bR^\infty)$ is the colimit of the spaces $C_n(\bR^d)$ under the obvious inclusions. The space $\mathrm{Emb}(n,\bR^\infty)$ is a contractible Hausdorff space on which the natural action of $\Sigma_n$ is free and properly discontinuous, so its quotient $C_n(\bR^\infty)$ is a model for the classifying space $B\Sigma_n$.

For any $M$ and $X$, there is a functor $\cB(M,X) \to \Sigma$ given by forgetting both the labels of the paths and the paths themselves, remembering only the partially-defined injection induced by the paths. This means that any twisted coefficient system $\Sigma\to\ab$ canonically induces a twisted coefficient system $\cB(M,X)\to \Sigma\to\ab$ (\cf Remark 4.6 of \cite{Palmer2017comparisontwistedcoefficient}).

\subsection{A functorial viewpoint.}\label{ssTCS-functorial}

This functor $\cB(M,X) \to \Sigma$ arises naturally in another way, if we view $\cB$ as a functor of $M$ and $X$. More precisely, we think of $\cB(-,-)$ as a functor taking as input a based space $X$ and a manifold $\mbar$ equipped with a collar neighbourhood together with a basepoint on $\partial\mbar$. (Morphisms of such data are based maps $X \to Y$ together with based embeddings $\mbar \hookrightarrow \nbar$ that are \emph{neat}, i.e., compatible with the collar neighbourhoods.) Its output is the category $\cB(M,X)$ equipped with certain additional structure. See \S 4 of \cite{Palmer2017comparisontwistedcoefficient} for more precise details. Now, by the Whitney Embedding Theorem,\footnote{\label{f:Whitney} The Whitney Embedding Theorem implies that any (paracompact) smooth manifold without boundary admits an embedding into some Euclidean space. One may deduce from this the appropriate analogous fact for manifolds with collared boundary -- see Lemma A.1 of \cite{Palmer2017comparisontwistedcoefficient} for the precise statement.} any such $\mbar$ admits a neat embedding into some Euclidean halfspace $\bR^N_{+}$. This embedding, together with the trivial map from $X$ to a point, induces a functor $\cB(M,X) \to \cB(\mathrm{int}(\bR^N_{+})) \cong \Sigma$, which is isomorphic to the forgetful functor described above.

\subsection{A more general case.}\label{ssTCS-moregeneral}
Instead of configurations of points (closed $0$-dimensional submanifolds), one may consider configurations of closed submanifolds of higher dimension. Let $\mbar$ be a connected manifold with non-empty boundary and of dimension at least $2$, as before. Also fix a closed manifold $P$ and an embedding $\iota_0 \colon P\hookrightarrow \partial\mbar$. Choose an embedding $e\colon \mbar \hookrightarrow \mbar$ which is isotopic to the identity and such that $e(\mbar)$ is disjoint from $\iota_0(P)$. We obtain a sequence of pairwise-disjoint embeddings of $P$ into $M$ by defining $\iota_n \coloneqq e^n \circ \iota_0$. Writing the disjoint union $P\sqcup \cdots \sqcup P$ of $n$ copies of $P$ as $nP$ for short, define $C_{nP}(M)$ to be the path-component of $\mathrm{Emb}(nP,M)/\mathrm{Diff}(nP)$ containing $[\iota_1 \sqcup \cdots \sqcup \iota_n]$. A stabilisation map $C_{nP}(M)\to C_{(n+1)P}(M)$ may then be defined by sending $[\phi_1 \sqcup \cdots \sqcup \phi_n]$ to $[(e\circ \phi_1) \sqcup \cdots \sqcup (e\circ \phi_n) \sqcup \iota_1]$. One may also define more complicated versions of this setup, in which the submanifolds in $C_{nP}(M)$ are parametrised modulo a subgroup of $\Diff(P)$ and come equipped with labels in some bundle over $\Emb(P,M)$.

Everything in this paper generalises to this setting, including an analogous notion of \emph{twisted coefficient system} for $\{C_{nP}(M)\}$, and the \emph{height} and \emph{degree} (see \S\ref{sHeightDegree}) of such a twisted coefficient system. In an article in preparation \cite{Palmer2018Homologicalstabilityconfiguration} we prove (untwisted) homological stability for these more general kinds of configuration spaces, as long as $\dim(P)\leq \frac12(\dim(M)-3)$. The arguments of this paper then immediately imply a twisted homological stability result for these spaces too.


\section{Height and degree of a twisted coefficient system}\label{sHeightDegree}

\subsection{Degree.}\label{ssDegree}
First we will define the \emph{degree} of a functor $T\colon \cB(M,X)\to\ab$. Recall from \S\ref{ssTCS-TCS} the natural morphisms $\iota_n \colon n \to n+1$. The adjective ``natural'' suggests that they should form a natural transformation, and in fact they do. For every morphism $\phi \colon m \to n$ of $\cB(M,X)$ we have a commutative square

\begin{equation}\label{eIotaSquare}
\centering
\begin{split}
\begin{tikzpicture}
[x=1.2mm,y=1.2mm,>=stealth']
\node (tl) at (0,10) {$m$};
\node (tr) at (20,10) {$m+1$};
\node (bl) at (0,0) {$n$};
\node (br) at (20,0) {$n+1$};
\draw[->] (tl) to node[above,font=\small]{$\iota_m$} (tr);
\draw[->] (bl) to node[above,font=\small]{$\iota_n$} (br);
\draw[->] (tl) to node[left,font=\small]{$\phi$} (bl);
\draw[->] (tr) to node[right,font=\small]{$S\phi$} (br);
\end{tikzpicture}
\end{split}
\end{equation}

where the morphism $S\phi$ is defined as follows: if $\phi$ is represented by a path $p$ in $C_k(M,X)$ for some $k\leq \mathrm{min}\{m,n\}$, then $S\phi$ is represented by the path $s_k\circ p$ in $C_{k+1}(M,X)$, where $s_k$ is the stabilisation map from \S\ref{ssTCS-stabmap}. Thus we have an endofunctor $S\colon \cB(M,X)\to \cB(M,X)$ (which we call the \emph{stabilisation endofunctor}) and a natural transformation $\iota\colon \mathrm{id}\Rightarrow S$. Note that each $\iota_n$ has an obvious left-inverse $\pi_n$, using the reverse of the path used to define $\iota_n$, and these morphisms fit together to form a left-inverse $\pi\colon S\Rightarrow \mathrm{id}$ for $\iota$.

So, given any $T\colon \cB(M,X)\to\ab$ we get a natural transformation $T\circ \iota\colon T\Rightarrow T\circ S$, or in other words a morphism in the abelian category $\ab^{\cB(M,X)}$. Denote its cokernel by $\Delta T\colon \cB(M,X)\to\ab$.

\begin{defn}\label{dDegree}
The \emph{degree} of a functor $T\colon \cB(M,X)\to\ab$ is defined recursively by
\[
\deg(0) = -1 \qquad\qquad \deg(T) = \deg(\Delta T) + 1,
\]
where $0$ is the identically-zero functor.
\end{defn}

\paragraph*{Restriction to fully-defined braids.}
The degree of $T$ in fact only depends on its restriction to the \emph{injective braid category}
\[
\cBf(M,X) \subset \cB(M,X)
\]
whose objects are the non-negative integers, just as for $\cB(M,X)$, and whose morphisms are ``braids'' in $M \times [0,1]$ from $\{a_1,\ldots,a_m\} \times \{0\}$ to $\{a_1,\ldots,a_n\} \times \{1\}$ (whose strands are labelled by $\Omega X$) with precisely $m$ strands -- in other words, the \emph{\textbf{f}ully-defined braids}, whereas morphisms in $\cB(M,X)$ are \emph{partially-defined braids}. Precisely, recall that a morphism in $\cB(M,X)$ is a choice of $k\leq\mathrm{min}\{m,n\}$ and a certain path in $C_k(M,X)$. This morphism belongs to $\cBf(M,X)$ if and only if $k=m$. Note that there are no morphisms from $m$ to $n$ if $m>n$. This means, in particular, that the object $0$ -- which is both initial and terminal in $\cB(M,X)$ -- fails to be terminal in the subcategory $\cBf(M,X)$, although it is still initial.

The stabilisation endofunctor $S\colon \cB(M,X) \to \cB(M,X)$ restricts to an endofunctor $S_{\mathsf{f}}$ on this subcategory $\cBf(M,X)$ and the natural transformation $\iota \colon \mathrm{id} \Rightarrow S$ restricts to $\iota_{\mathsf{f}} \colon \mathrm{id} \Rightarrow S_{\mathsf{f}}$. This does not in general have a left-inverse, so both functors $\coker(T\circ \iota_{\mathsf{f}})$ and $\kernel(T\circ \iota_{\mathsf{f}})$ may be non-trivial for a given functor $T\colon \cBf(M,X) \to \ab$.

\begin{defn}\label{dDegree2}
The zero functor $\cBf(M,X) \to \ab$ has degree $-1$. A non-zero functor $T\colon \cBf(M,X) \to \ab$ has degree $\leq d$ if and only if $\kernel(T\circ \iota_{\mathsf{f}})=0$ and $\degree(\coker(T\circ \iota_{\mathsf{f}}))\leq d-1$.
\end{defn}

This is called the \emph{injective degree} in \S 2 of \cite{Palmer2017comparisontwistedcoefficient} (see Definition 2.1), where we compare various related notions of degree. We note that $T\colon \cB(M,X) \to \ab$ has degree $d$ if and only if its restriction to $\cBf(M,X)$ has degree $d$ (this is easy to prove by induction on $d$).

\begin{rmk}\label{rmk:comparision-to-RWW}
If $M$ is of the form $\bR^2 \times M^\prime$ then, by Remark \ref{rmk:when-B-is-symmetric-etc}, $\cB(M,X)$ has a braided monoidal structure, hence in particular a so-called \emph{pre-braided} structure, so Definition 4.10 of \cite{Randal-WilliamsWahl2017Homologicalstabilityautomorphism} applies (if we take $A=0$ and $X=1$) and assigns a ``degree at $N$'' to a functor $\cB(M,X) \to \ab$ for each $N \in \bZ$. When $N=0$ this agrees with Definition \ref{dDegree}. Similarly, $\cBf(M,X)$ also has a pre-braided structure when $M = \bR^2 \times M^\prime$, so functors $\cBf(M,X) \to \ab$ are assigned a ``degree at $N$'' by \cite{Randal-WilliamsWahl2017Homologicalstabilityautomorphism}, which agrees with Definition \ref{dDegree2} when $N=0$. For a further discussion of the relation to the twisted coefficient systems of \cite{Randal-WilliamsWahl2017Homologicalstabilityautomorphism}, see \S 2.4 of \cite{Palmer2017comparisontwistedcoefficient}.
\end{rmk}

\begin{rmk}
Clearly, any constant functor $\cB(M,X) \to \ab$ has degree $\leq 0$, and therefore so does any functor isomorphic to a constant functor. Conversely, any functor of degree $\leq 0$ is isomorphic to a constant functor. One may see this as follows. Suppose that $\mathrm{deg}(T)\leq 0$. By definition of the degree, $T(\iota_n)$ is an isomorphism for all $n$, which, due to the structure of the category $\cB(M,X)$, implies that every morphism is sent to an isomorphism by $T$. Thus $T$ factors through the Grothendieck groupoid $\cG(\cB(M,X))$ of $\cB(M,X)$, which may be defined as the fundamental groupoid of its classifying space. But $\cB(M,X)$ has an initial object, so its classifying space is contractible and its Grothendieck groupoid is equivalent to the trivial category. Thus, up to isomorphism, $T$ factors through the trivial category.

Similarly, a functor $T \colon \cBf(M,X) \to \ab$ has degree $\leq 0$ if and only if it is isomorphic to a constant functor, by the same argument as above.
\end{rmk}

\subsection{Height.}\label{ssHeight}
Denote by $u$ the homomorphism $\pi_1 C_n(M,X)\to \Sigma_n$ which only remembers the permutation of the basepoint configuration (this is part of the canonical functor $\cB(M,X)\to \Sigma$ from \S\ref{ssTCS-specialcase}). Write $G_n \coloneqq \pi_1 C_n(M,X)$ and define $G_n^k \coloneqq u^{-1}(\Sigma_{n-k}\times \Sigma_k)$. To define the \emph{height} of a functor $T\colon \cB(M,X)\to\ab$ we need the following decomposition result:

\begin{prop}\label{pDecomp}
Let $T\colon \cB(M,X)\to\ab$ be any functor, and recall that we write $T_n = T(n)$. Then for $k=0, \ldots, n$ there is a direct summand \textup{(}as abelian groups\textup{)} $T_n^k$ of $T_n$ such that the action of $G_n^k\leq G_n$ on $T_n$ preserves it\textup{:} so it is also a direct summand as a $\bZ G_n^k$-module. Moreover, there is a decomposition of $T_n$ as a $\bZ G_n$-module\textup{:}
\begin{equation}\label{eDecomp}
T_n \;\cong\; \bigoplus_{k=0}^n \left( \bZ G_n \otimes_{\bZ G_n^k} T_n^k \right) .
\end{equation}
This identification is natural in the sense that $\iota_n \colon T_n \to T_{n+1}$ sends $T_n^k$ into $T_{n+1}^k$, and the map of the right-hand side induced by $\iota_n$ and $(s_n)_*$ corresponds under \eqref{eDecomp} to $\iota_n$ on the left-hand side.
\end{prop}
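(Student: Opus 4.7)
The plan is to build the decomposition via M\"obius inversion over the Boolean lattice of subsets of $\{1,\ldots,n\}$, i.e.\ a cross-effect argument adapted to the geometric category $\cB(M,X)$.

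First I would introduce, for each $S \subseteq \{1,\ldots,n\}$, canonical morphisms $\iota_S \colon (x_0)^{|S|} \to (x_0)^n$ and $\pi_S \colon (x_0)^n \to (x_0)^{|S|}$ in $\cB(M,X)$. Geometrically, $\iota_S$ is a path in $C_{|S|}(M,X)$ from $\{(a_1,x_0),\ldots,(a_{|S|},x_0)\}$ to $\{(a_i,x_0) : i \in S\}$, built by interleaving the basic shifts of \S\ref{ssTCS-TCS} with permutations in $\Sigma_n \subseteq G_n$, while $\pi_S$ is the corresponding left-inverse obtained by reversing the path. Tracing compositions in the manner of \eqref{eComposition} gives $\pi_S \iota_S = \mathrm{id}$ and, crucially, the intersection identity $\pi_S \iota_T = \iota_{S \cap T} \pi_{S \cap T}$ (with the ambient tuples understood from context). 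Hence $e_S \coloneqq \iota_S \pi_S \in \mathrm{End}_{\cB(M,X)}((x_0)^n)$ forms a commuting family of idempotents satisfying $e_S e_T = e_{S \cap T}$ and $e_{[n]} = \mathrm{id}$.

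Applying $T$ yields a commuting family of idempotents $\{T(e_S)\}_{S \subseteq [n]}$ on $T_n$. M\"obius inversion produces orthogonal idempotents $T(f_S) \coloneqq \sum_{S' \subseteq S} (-1)^{|S|-|S'|} T(e_{S'})$ with $\sum_S T(f_S) = \mathrm{id}$, giving the abelian-group decomposition $T_n = \bigoplus_{S \subseteq [n]} \image(T(f_S))$. To upgrade this to a $G_n$-module decomposition, I would verify that $T(\gamma)$ permutes the summands according to the $\Sigma_n$-action on subsets: the key identity is $\gamma \iota_S = \iota_{\sigma(S)} \widetilde{\gamma}$ in $\cB(M,X)$ for $\gamma \in G_n$, $\sigma \coloneqq u(\gamma)$, and the unique $\widetilde{\gamma} \coloneqq \pi_{\sigma(S)} \gamma \iota_S \in G_{|S|}$; this gives $T(\gamma)(\image(T(e_S))) = \image(T(e_{\sigma(S)}))$, and a short induction on $|S|$ using $\image(T(e_S)) = \bigoplus_{S' \subseteq S} \image(T(f_{S'}))$ promotes the same statement to the $f_S$'s.

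Taking $S_0 \coloneqq \{n-k+1,\ldots,n\}$ as the standard $k$-subset (whose $G_n$-stabiliser is exactly $G_n^k$) and setting $T_n^k \coloneqq \image(T(f_{S_0}))$, the $G_n$-orbit of $T_n^k$ assembles into $\bigoplus_{|S|=k} \image(T(f_S)) \cong \bZ G_n \otimes_{\bZ G_n^k} T_n^k$, and summing over $k$ yields \eqref{eDecomp}. Naturality under $\iota_n$ follows because $\iota_n$ shifts positions by $+1$ and hence carries $S_0^{(n)}$ to $S_0^{(n+1)}$, intertwining the relevant $T(f_{S_0})$'s. The main obstacle will be verifying the combinatorial identities $\pi_S \iota_T = \iota_{S \cap T} \pi_{S \cap T}$ and $\gamma \iota_S = \iota_{\sigma(S)} \widetilde{\gamma}$ at the level of homotopy classes in $\cB(M,X)$: these reduce to careful unwinding of the composition rule, but keeping track of which basepoints ``survive'' each partial concatenation is genuinely fiddly.
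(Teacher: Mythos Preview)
Your approach is correct and yields the same summands $T_n^k$ as the paper, but the route is genuinely different. The paper works with the idempotents $Tf_S$ (where $f_S$ is the constant-path endomorphism forgetting the strands in $S$), notes the relation $Tf_{S_1}\circ Tf_{S_2}=Tf_{S_1\cup S_2}$, and then builds the decomposition \emph{inductively}: it first establishes a split short exact sequence (Lemma~\ref{lSES}) decomposing $T_n[S_1\sqcup S_2|\cdots|S_p]$ into three pieces, and iterates this to reach the discrete partition (Lemma~\ref{lDecompGeneral}). Your idempotents $e_S=\iota_S\pi_S$ are exactly the paper's $f_{\undn\smallsetminus S}$ (since $\iota_S$ followed by its reverse $\pi_S$ is homotopic to the constant path on $\{a_i:i\in S\}$), so the relation $e_Se_T=e_{S\cap T}$ is the same as the paper's union relation in complementary indexing. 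Where you diverge is in extracting the summands: rather than induct, you pass directly to the orthogonal idempotents $\sum_{S'\subseteq S}(-1)^{|S|-|S'|}Te_{S'}$ via M\"obius inversion on the Boolean lattice, which is cleaner once one recognises it as an instance of the standard decomposition associated to $n$ commuting idempotents (namely $q_i\coloneqq Te_{\undn\smallsetminus\{i\}}$). Your treatment of the $G_n$-action is also slightly slicker: the paper observes that $G_n$ permutes the summands $T_n[Q^\delta]$ via the projection to $\Sigma_n$, whereas you deduce this directly from the conjugation identity $\gamma e_S=e_{\sigma(S)}\gamma$, which immediately gives $T(\gamma)T(f_S)=T(f_{\sigma(S)})T(\gamma)$ by linearity. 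The trade-off is that the paper's hands-on approach avoids having to verify the intersection identity $\pi_S\iota_T=\iota_{S\cap T}\pi_{S\cap T}$ at the level of homotopy classes in $\cB(M,X)$ (it only needs the simpler $f_{S_1}f_{S_2}=f_{S_1\cup S_2}$, which is immediate for constant paths), whereas your approach front-loads that bookkeeping but then finishes more quickly.
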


\begin{rmk}[\emph{A clarification}]
It is important to add that the $\bZ G_n^k$-submodules $T_n^k$ of $T_n$ in the above proposition are \emph{canonical}, that is to say that there is an operation that takes a functor $T\colon \cB(M,X)\to\ab$ as input and outputs a choice of such a submodule for each $k=0,\ldots,n$. This operation is given explicitly below (on the line immediately above Remark \ref{rObservations}). Thus when we speak of ``the'' module $T_n^k$ there is no ambiguity. Precisely \emph{how} this operation is defined is less important than the fact that it \emph{is} (well-)defined, so its definition is relegated to the proof of the proposition given below.

See also \S 4.5 of \cite{Palmer2017comparisontwistedcoefficient}, where a definition of the height of a twisted coefficient system is given in a more general setting, which includes an operation taking $T$ to $T^\prime$ (where $T^\prime$ encapsulates the data of the various $T_n^k$).
\end{rmk}

\begin{rmk}[\emph{Related decompositions}]
This is similar to the cross-effect decomposition of a functor from a pointed monoidal category (a monoidal category whose unit object is also initial and terminal) to an abelian category, which appears in \cite[Proposition 3.4]{HartlPirashviliVespa2015Polynomialfunctorsalgebras} (see also \cite[Proposition 2.11]{DjamentVespa2013Foncteursfaiblementpolynomiaux}), and the idea of which goes back to Eilenberg and MacLane~\cite[\S 9]{EilenbergMac1954groupsHnII}. However, our category $\cB(M,X)$ is not in general monoidal (it is when $M$ is of the form $\bR\times N$), so this setup does not cover our situation. A similar cross-effect decomposition appears in \cite[Proposition 1.4]{HartlVespa2011Quadraticfunctorspointed} for functors from a source category which has finite coproducts -- however, $\cB(M,X)$ also does not have finite coproducts. Yet another similar decomposition appears in \cite[Lemme 2.7(3)]{CollinetDjamentGriffin2013Stabilitehomologiquepour} for functors from a source category which is a wreath product $\cC \wr \Lambda$, where $\cC$ is any category and $\Sigma \leq \Lambda \leq \Se$. Here, $\Se$ is the category of finite sets and partially-defined functions and $\Sigma$ is its subcategory of partially-defined injections, as in Definition \ref{dSigma}. Our category $\cB(M,X)$ may be written as a wreath product $\pi_1(X,x_0) \wr \cB(M)$, where the wreath product is defined using the projection $\cB(M)\to\Sigma$. This is however not of the form considered in \cite{CollinetDjamentGriffin2013Stabilitehomologiquepour}, unless $M$ is simply-connected and of dimension at least $3$ (see \S\ref{ssTCS-specialcase}).
\end{rmk}

Since none of the existing decompositions in the literature covers the general case that we require, we give a complete proof of the decomposition \eqref{eDecomp} in our situation (i.e.\ Proposition \ref{pDecomp}). This is a little technical, so the reader may wish to skip directly to Definition \ref{dHeight} at this point. Before embarking upon the proof of Proposition \ref{pDecomp}, we point out a correction.

\begin{rmk}[\emph{A correction}]\label{rCorrection}
We should mention that the proof of the decomposition in Lemme 2.7(3) of \cite{CollinetDjamentGriffin2013Stabilitehomologiquepour} contains an error. We will briefly explain the error and sketch a corrected proof of their decomposition. See \cite[\S 2.1]{CollinetDjamentGriffin2013Stabilitehomologiquepour} for any unexplained notation. The first part of their proof establishes a decomposition
\begin{equation}\label{eCDG}
T(C) = \bigoplus_{P\subseteq \cP(E)} \bigcap_{A\in P} T_{A,M}(C),
\end{equation}
where $T_{A,S}(C)$ is defined to be $\kernel(T(d_{C,A})) \cap \image(T(d_{C,S}))$, $M = M_P$ is defined to be $\bigcap (\cP(E)\smallsetminus P)$ and the notation $\cP(E)$ means the power set of $E$.\footnote{There is a typo in \cite{CollinetDjamentGriffin2013Stabilitehomologiquepour}, where $M$ is incorrectly defined to be $\bigcap P$, rather than $\bigcap (\cP(E)\smallsetminus P)$.} The aim is then to show that this is equal to
\begin{equation}\label{eCDG2}
\bigoplus_{S\subseteq E} \bigcap_{A\in Q_S} T_{A,S}(C),
\end{equation}
where we define $Q_S \coloneqq \{ A\in \cP(E) \mid A\subsetneq S \}$. Define also $R_S \coloneqq \{ A\in \cP(E) \mid A \not\supseteq S \}$ and note that $Q_S \subseteq R_S$ with equality exactly when $S=E$. They state that $T_{A,S}(C)=0$ whenever $A\not\in Q_S$, but in fact this is only true under the stronger assumption that $A\not\in R_S$. We may therefore restrict the direct sum in \eqref{eCDG} to those $P$ such that $P\subseteq R_{M_P}$ (rather than $P\subseteq Q_{M_P}$, as claimed). The $P$ with this property are precisely the subsets $R_S$ for $S\subseteq E$. Moreover, the function $R\colon \cP(E) \to \cP(\cP(E))$ given by $S \mapsto R_S$ is injective (in contrast to the function $Q$), so we see that \eqref{eCDG} is equal to
\begin{equation}\label{eCDG3}
\bigoplus_{S\subseteq E} \bigcap_{A\in R_S} T_{A,S}(C).
\end{equation}
The final step of the proof is to show that restricting each intersection to the subset $Q_S$ of $R_S$ does not change it. The subset $Q_S$ is coinitial in $R_S$, but the function $\cP(E) \to \cP(T(C))$ given by $A \mapsto T_{A,S}(C)$ is non-\emph{increasing}, so this does not help us. Instead, this follows from the facts that $T_{A,S}(C) = T_{A\cap S,S}(C)$ and $\{ A\cap S \mid A\in R_S \} = Q_S$.

An alternative correction to the proof of Lemme 2.7(3) of \cite{CollinetDjamentGriffin2013Stabilitehomologiquepour} was pointed out to us later by Aur{\'e}lien Djament, which we also briefly sketch. The decomposition \eqref{eCDG} arises from the family of pairwise-commuting idempotents $\{ T(d_{C,S}) \mid S\subseteq E \}$ of $T(C)$. If we instead consider the subfamily $\{ T(d_{C,E\smallsetminus\{s\}}) \mid s\in E \}$, the corresponding decomposition is
\begin{equation}\label{eCDG4}
T(C) = \bigoplus_{S\subseteq E} \bigcap_{s\in S} T_{E\smallsetminus\{s\},S}(C).
\end{equation}
Using the fact that $T_{A,S}(C) = T_{A\cap S,S}(C)$, we may replace $T_{E\smallsetminus\{s\},S}(C)$ with $T_{S\smallsetminus\{s\},S}(C)$ on the right-hand side. Note that $\{ S\smallsetminus\{s\} \mid s\in S \}$ is cofinal in $Q_S$ and the function $\cP(E) \to \cP(T(C))$ given by $A \mapsto T_{A,S}(C)$ is non-increasing, so this is equal to \eqref{eCDG2}.
\end{rmk}

We now prove Proposition \ref{pDecomp}, for which we will need the following definitions.

\begin{defn}
For $S\subseteq \{1,\ldots,n\} \eqqcolon \undn$ let $f_S\colon n \to n$ be the endomorphism in $\cB(M,X)$ given by the constant path in $C_{n - \left| S\right|}(M,X)$ on the configuration $\{(a_i,x_0) \;|\; i\in \undn\smallsetminus S \}$. So this is the endomorphism which ``forgets'' the points $a_i$ for $i\in S$ and is the identity elsewhere.
\end{defn}

\begin{defn}\label{def:Tnk}
For $p\geq 0$ and $\lbrace S_1, \ldots, S_p \rbrace$ a partition of $S\subseteq\undn$ define
\[
T_n[S_1|\!\cdots\! |S_p] \;\coloneqq\; \image(Tf_{\undn\smallsetminus S}) \cap \bigcap_{i=1}^p \kernel(Tf_{S_i}).
\]
Note that the induced maps $Tf_S\colon T_n\to T_n$ are not in general $\bZ G_n$-module homomorphisms, so these are subgroups but not sub-$\bZ G_n$-modules.

We will write $S^\delta$ for the \emph{discrete} partition of $S$, and define
\[
T_n^k \;\coloneqq\; T_n[\{ n\!-\!k\!+\!1,\ldots,n \}^\delta].
\]
\end{defn}

\begin{rmk}\label{rObservations}
A few immediate observations are the following: Each $Tf_S\colon T_n\to T_n$ is idempotent. The composition of $Tf_{S_1}$ and $Tf_{S_2}$ is $Tf_{S_1 \cup S_2}$, so in particular the $Tf_S$ for $S\subseteq\undn$ all pairwise commute. By definition $T_n[\phantom{\cdot}] = \image(Tf_{\undn})$, and since $f_{\varnothing}=\mathrm{id}$ we also have $T_n[\undn] = \image(Tf_{\varnothing}) \cap \kernel(Tf_{\undn}) = \kernel(Tf_{\undn})$, so:
\begin{equation}\label{eDecomp0}
T_n = \image(Tf_{\undn}) \oplus \kernel(Tf_{\undn}) = T_n[\phantom{\cdot}] \oplus T_n[\undn].
\end{equation}
\end{rmk}

The following lemma is less immediate but can be proved by some diagram-chasing and drawing little cartoons like \eqref{eComposition} and \eqref{eIota}. We will give a proof in symbols.

\begin{lem}\label{lBijection}
For $k\leq m\leq n$, the map
\[
\iota_m^n \coloneqq \iota_{n-1} \circ \cdots \circ \iota_m \colon T_m \to T_n
\]
is split-injective and sends $T_m^k$ into $T_n^k$. Moreover, its restriction to a map $T_m^k \to T_n^k$ is a bijection. Hence any left-inverse for $\iota_m^n$ restricts to a bijection $T_n^k \to T_m^k$.
\end{lem}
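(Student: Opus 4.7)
My plan is to reduce the entire statement to a handful of explicit identities in the category $\cB(M,X)$ and then manipulate them by applying the functor $T$. Writing $\pi_a^b \coloneqq \pi_a \circ \pi_{a+1} \circ \cdots \circ \pi_{b-1}$ in parallel with the iterated notation $\iota_a^b$, I would first verify the following identities in $\cB(M,X)$, each of which can be checked either by drawing path cartoons in the style of \eqref{eComposition} and \eqref{eIota}, or by comparing the underlying partial injections and noting that the two sides of each identity decompose into the same concatenation of canonical $p_i$-moves and constant moves: (i) $\pi_a^b \circ \iota_a^b = \mathrm{id}$; (ii) $f_{\{1,\ldots,n-k\}} = \iota_k^n \circ \pi_k^n$; (iii) $f_{\{1,\ldots,b-k\}} \circ \iota_a^b = \iota_a^b \circ f_{\{1,\ldots,a-k\}}$ for $k \leq a \leq b$; (iv) $f_{\{j\}} \circ \iota_a^b = \iota_a^b \circ f_{\{j-(b-a)\}}$ for $k \leq a \leq b$ and $j \in \{b-k+1,\ldots,b\}$; and (v) $f_{\{i\}} \circ \pi_k^n = \pi_k^n \circ f_{\{i+(n-k)\}}$ for $i \in \{1,\ldots,k\}$.

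Split-injectivity of $T\iota_m^n$ is then immediate from (i), since $T\pi_m^n$ provides a left-inverse. Identities (iii) and (iv), combined with the definition of $T_b^k$, give by a direct image-and-kernel check that $T\iota_a^b$ carries $T_a^k$ into $T_b^k$ for any $k \leq a \leq b$; in particular $T\iota_m^n(T_m^k) \subseteq T_n^k$ and $T\iota_k^m(T_k^k) \subseteq T_m^k$.

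For surjectivity of the restricted map $T\iota_m^n \colon T_m^k \to T_n^k$, given $z \in T_n^k$ I would set $w' \coloneqq T\pi_k^n(z)$ and $w \coloneqq T\iota_k^m(w')$. By (ii), combined with the idempotency of $Tf_{\{1,\ldots,n-k\}}$ and the fact that $z$ lies in its image and is therefore fixed by it, I obtain $T\iota_k^n(w') = Tf_{\{1,\ldots,n-k\}}(z) = z$, whence $T\iota_m^n(w) = T\iota_k^n(w') = z$. To see that $w \in T_m^k$ (and not merely $T_m$), it suffices by the previous paragraph to verify $w' \in T_k^k = \bigcap_{i=1}^k \kernel(Tf_{\{i\}})$, since the image condition defining $T_k^k$ is vacuous. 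But (v) gives $Tf_{\{i\}}(w') = T\pi_k^n(Tf_{\{i+(n-k)\}}(z)) = 0$ for each $i \in \{1,\ldots,k\}$, using $z \in \kernel(Tf_{\{i+(n-k)\}})$. Together with split-injectivity this produces the required bijection $T\iota_m^n \colon T_m^k \xrightarrow{\cong} T_n^k$.

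The final ``hence'' statement is then formal: any left-inverse $\phi$ of $T\iota_m^n$ sends each $z = T\iota_m^n(w) \in T_n^k$ back to $w \in T_m^k$, so $\phi$ restricts to a well-defined map $T_n^k \to T_m^k$ that coincides with the inverse of the bijection just established, hence is itself a bijection. I expect the main obstacle to be confirming the homotopy content of identities (ii)--(v): the partial-injection bookkeeping is entirely mechanical, but one must also verify that the two sides of each identity are homotopic as paths in the relevant configuration space, which follows from writing each morphism as an explicit concatenation of $p_i$-moves, their reverses (inside the $\pi$'s), and constant moves, and then matching the two sides up to reparametrisation.
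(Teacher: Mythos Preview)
Your proposal is correct and uses the same underlying ideas as the paper's proof---namely the commutation relations between the maps $\iota$, $\pi$ and $f_S$---but you organise the argument differently. The paper proceeds by induction on $n-m$, reducing to the single step $\iota_m\colon T_m^k\to T_{m+1}^k$ and checking directly that this is a bijection onto $T_{m+1}^k$; your approach instead works globally with the composites $\iota_a^b$ and $\pi_k^n$, encoding the induction in your identities (ii)--(v). Your route has the pleasant feature of factoring the surjectivity argument through $T_k^k$, where the image condition is vacuous, which slightly streamlines the bookkeeping compared to the paper's element-by-element verification in \eqref{eProperty1}--\eqref{eProperty2}. Conversely, the paper's inductive organisation keeps each identity at the one-step level, so the homotopy content of the relations is marginally easier to verify by inspection. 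Both arguments are equally rigorous once the identities are checked.
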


\begin{proof}
As mentioned in \S\ref{ssDegree}, each $\iota_n$ has a natural left-inverse $\pi_n$ -- these compose to give a left-inverse $\pi_m^n$ for $\iota_m^n$. Just as for $\iota_n$ and $\pi_n$, by an abuse of notation we will denote the induced map $Tf_S\colon T_n\to T_n$ also by $f_S$.

We now show that $\iota_m(T_m^k)\subseteq T_{m+1}^k$, and hence by induction that $\iota_m^n(T_m^k)\subseteq T_n^k$. Suppose $x=\iota_m(y)$ for $y\in T_m^k$. Then by definition $y = f_{\{ 1,\ldots,m-k \}}(z)$ for some $z\in T_m$. Since $\pi_m\colon T_{m+1}\to T_m$ is split-surjective we have $z = \pi_{m}(w)$ for some $w\in T_{m+1}$. Hence
\begin{equation}\label{eProperty1}
x = \iota_m \circ f_{\{ 1,\ldots,m-k \}} \circ \pi_m (w) = f_{\{ 1,\ldots,m-k+1 \}} (w).
\end{equation}
For any $m-k+2 \leq i\leq m+1$ we have
\begin{equation}\label{eProperty2}
f_{\{i\}}(x) = f_{\{i\}} \circ \iota_m(y) = \iota_m \circ f_{\{i-1\}} (y) = \iota_m(0) = 0,
\end{equation}
since $y\in T_m^k$. The two properties \eqref{eProperty1} and \eqref{eProperty2} verify that $x\in T_{m+1}^k$.

Now we show that the restriction of $\iota_m$ to $T_m^k\to T_{m+1}^k$ is a bijection, and hence by induction that the restriction of $\iota_m^n$ to $T_m^k\to T_n^k$ is a bijection. Suppose $x\in T_{m+1}^k$, and define $z\coloneqq \pi_m(x)\in T_m$. Then
\[
\iota_m(z) = \iota_m \circ \pi_m (x) = f_{\{1\}} (x).
\]
But note that $x = f_{\{ 1,\ldots,m-k+1 \}}(y)$ for some $y\in T_{m+1}$, so
\begin{align*}
f_{\{1\}} (x) &= f_{\{1\}} \circ f_{\{ 1,\ldots,m-k+1 \}} (y) \\
\hspace*{2cm}&= f_{\{ 1,\ldots,m-k+1 \}} (y) \qquad \text{\small (by Remark \ref{rObservations} and since $k\leq m$)} \\
&= x.
\end{align*}
So it remains to prove that $z\in T_m^k$. Firstly,
\[
z = \pi_m \circ f_{\{ 1,\ldots,m-k+1 \}} (y) = f_{\{ 1,\ldots,m-k \}} \circ \pi_m (y).
\]
Secondly, for any $m-k+1 \leq i\leq m$, we have
\[
\iota_m \circ f_{\{i\}} (z) = f_{\{i+1\}} \circ \iota_m (z) = f_{\{i+1\}} (x) = 0,
\]
since $x\in T_{m+1}^k$. But $\iota_m$ is split-injective, so $f_{\{i\}}(z) = 0$. These two facts verify that $z\in T_m^k$.
\end{proof}

The following lemma will allow us to construct the required decomposition by induction:

\begin{lem}\label{lSES}
For all $\lbrace S_1, \ldots, S_p \rbrace$ partitioning $S\subseteq\undn$ with $p\geq 2$, there is a split short exact sequence
\[
0\to T_n[S_1|\!\cdots\! |S_p] \;\hookrightarrow\; T_n[S_1 \!\sqcup\! S_2|\!\cdots\! |S_p] \;\twoheadrightarrow\; T_n[S_1|S_3|\!\cdots\! |S_p] \;\oplus\; T_n[S_2|\!\cdots\! |S_p] \to 0.
\]
The first map is the inclusion, and a section of the second map is given by the inclusion of each of the two factors. So in other words we have a decomposition
\[
T_n[S_1 \!\sqcup\! S_2|\!\cdots\! |S_p] \;=\; T_n[S_1|\!\cdots\! |S_p] \;\oplus\; T_n[S_2|\!\cdots\! |S_p] \;\oplus\; T_n[S_1|S_3|\!\cdots\! |S_p].
\]
\end{lem}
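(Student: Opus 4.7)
The plan is to exploit the commuting-idempotent structure noted in Remark \ref{rObservations} and reduce to the standard decomposition of an abelian group under the action of two commuting idempotents. Throughout, I identify $f_S$ with its induced map $Tf_S\colon T_n\to T_n$, and I use freely the facts that each $Tf_S$ is idempotent, that $Tf_S\circ Tf_{S'}=Tf_{S\cup S'}$, and hence that any two such idempotents commute. In particular, for commuting idempotents $e,e'$ on an abelian group one has $\image(ee')=\image(e)\cap\image(e')$.

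First I would single out the ``background'' subgroup
\[
N \;\coloneqq\; \image(Tf_{\undn\smallsetminus S}) \,\cap\, \bigcap_{i\geq 3}\kernel(Tf_{S_i}),
\]
and observe that $e_1\coloneqq Tf_{S_1}$ and $e_2\coloneqq Tf_{S_2}$ each preserve $N$. Indeed, $e_j$ commutes with $Tf_{\undn\smallsetminus S}$ and with each $Tf_{S_i}$, so it sends $\image(Tf_{\undn\smallsetminus S})$ into itself and preserves each $\kernel(Tf_{S_i})$. Since $e_1,e_2$ are commuting idempotents acting on $N$, there is the standard internal direct sum decomposition
\[
N \;=\; N_{--} \,\oplus\, N_{-+} \,\oplus\, N_{+-} \,\oplus\, N_{++},
\]
where $N_{\epsilon_1\epsilon_2}\coloneqq e_1^{\epsilon_1}e_2^{\epsilon_2}N$ with $e^{+}=e$ and $e^{-}=1-e$.

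The next step is to match each of the four subgroups in the statement with a union of these summands. On $N$, one has $\image(e_j)|_N=e_jN$ and $\kernel(e_j)|_N=(1-e_j)N$, so
\[
T_n[S_1|\cdots|S_p] \;=\; N\cap\kernel(e_1)\cap\kernel(e_2) \;=\; N_{--}.
\]
For the middle term we use $Tf_{S_1\sqcup S_2}=e_1e_2$, so
\[
T_n[S_1\!\sqcup\! S_2|\cdots|S_p] \;=\; N\cap\kernel(e_1e_2) \;=\; N_{--}\oplus N_{-+}\oplus N_{+-}.
\]
The only subtle point is that the outer terms $T_n[S_1|S_3|\cdots|S_p]$ and $T_n[S_2|\cdots|S_p]$ carry a larger ``$\image$'' condition than $T_n[S_1|\cdots|S_p]$, since the total set $S$ changes when $S_2$, respectively $S_1$, is omitted. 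Using the commuting-idempotent identity $\image(Tf_{\undn\smallsetminus(S\smallsetminus S_2)})=\image(Tf_{\undn\smallsetminus S}\cdot e_2)=\image(Tf_{\undn\smallsetminus S})\cap\image(e_2)$, this extra condition becomes exactly $\image(e_2)$ on $N$; so
\[
T_n[S_1|S_3|\cdots|S_p] \;=\; N\cap\image(e_2)\cap\kernel(e_1) \;=\; N_{-+},
\]
and symmetrically $T_n[S_2|\cdots|S_p]=N_{+-}$.

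Putting the three identifications together gives $T_n[S_1\!\sqcup\! S_2|\cdots|S_p]=T_n[S_1|\cdots|S_p]\oplus T_n[S_2|\cdots|S_p]\oplus T_n[S_1|S_3|\cdots|S_p]$ as an internal direct sum of subgroups of $T_n$, which immediately yields the split short exact sequence with the inclusions and splittings described in the statement. I do not anticipate a genuine obstacle: the entire content is that $e_1,e_2$ preserve $N$ (bookkeeping with the commutation relations) and that the seemingly different ``$\image$'' conditions appearing in the definitions of $C$ and $D$ translate, on $N$, into the conditions $\image(e_2)$ and $\image(e_1)$ respectively; the rest is the standard four-piece decomposition.
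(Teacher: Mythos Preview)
Your proof is correct and is essentially the same as the paper's, just organised more conceptually: you first abstract to the standard four-piece decomposition of $N$ under the two commuting idempotents $e_1=Tf_{S_1}$ and $e_2=Tf_{S_2}$ and then identify each term of the lemma with a summand, whereas the paper constructs the surjection $(Tf_{S_2},Tf_{S_1})$ directly and verifies three facts that amount to the same identifications. In particular, your observation that $\image(Tf_{\undn\smallsetminus(S\smallsetminus S_2)})=\image(Tf_{\undn\smallsetminus S})\cap\image(e_2)$ is precisely the content of the paper's Facts~1 and~2, and the kernel computation at the end of the paper's proof is your identification $N\cap\kernel(e_1e_2)=N_{--}\oplus N_{-+}\oplus N_{+-}$.
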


\begin{proof}
One can check from the definitions that the following facts are true:
\begin{itemizeb}
\item[1.] $Tf_{S_2}$ restricts to a map $T_n[S_1\!\sqcup\! S_2|\!\cdots\! |S_p] \to T_n[S_1|S_3|\!\cdots\! |S_p]$,
\item[] and similarly $Tf_{S_1}$ restricts to a map $T_n[S_1\!\sqcup\! S_2|\!\cdots\! |S_p] \to T_n[S_2|\!\cdots\! |S_p]$.\vspace{0.2em}
\item[2.] $T_n[S_1|S_3|\!\cdots\! |S_p]$ and $T_n[S_2|\!\cdots\! |S_p]$ are contained in $T_n[S_1\!\sqcup\! S_2|\!\cdots\! |S_p]$.\vspace{0.2em}
\item[3.] For $\{i,j\} \subseteq \{1,2\}$ if $x\in T_n[S_i|S_3|\!\cdots\! |S_p]$, then $Tf_{S_j}(x)$ is $x$ when $i\neq j$ and $0$ when $i=j$.\end{itemizeb}
These facts imply that the map $(Tf_{S_2},Tf_{S_1})$ restricts to the required split surjection (with a section given by inclusion of each factor). The kernel of this is
\begin{align*}
& T_n[S_1\!\sqcup\! S_2|S_3|\!\cdots\! |S_p] \cap \kernel(Tf_{S_1}) \cap \kernel(Tf_{S_2}) \\
=\; & \image(Tf_{\undn\smallsetminus S}) \cap \bigcap_{i=3}^p \kernel(Tf_{S_i}) \cap \kernel(Tf_{S_1\sqcup S_2}) \cap \kernel(Tf_{S_1}) \cap \kernel(Tf_{S_2}) \\
=\; & T_n[S_1|\!\cdots\! |S_p],
\end{align*}
since $\kernel(Tf_{S_1}) \subseteq \kernel(Tf_{S_1\sqcup S_2})$.
\end{proof}

We can now use this to inductively prove a more general decomposition:

\begin{lem}\label{lDecompGeneral}
For any $\varnothing\neq S\subseteq\undn$ and $R\subseteq \undn\smallsetminus S$ there is a decomposition
\begin{equation}\label{eDecompGeneral}
T_n[S|R^\delta] = \bigoplus_{\varnothing\neq Q\subseteq S} T_n[(Q\!\sqcup\! R)^\delta].
\end{equation}
As before, $Q^\delta$ denotes the discrete partition of the set $Q$, so for example $T_n[\lbrace 1,2 \rbrace | \lbrace 3, 4, 5\rbrace^\delta]$ means $T_n[\lbrace 1,2\rbrace | \lbrace 3\rbrace | \lbrace 4\rbrace | \lbrace 5\rbrace]$. Note that this decomposition is an equality of subgroups, not just an abstract isomorphism of groups.
\end{lem}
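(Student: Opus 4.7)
The plan is to induct on $|S| \geq 1$, with Lemma \ref{lSES} serving as the driving engine.

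The base case $|S| = 1$ is tautological: the only non-empty $Q \subseteq S$ is $Q = S$ itself, so the right-hand side reduces to $T_n[(S \sqcup R)^\delta] = T_n[S|R^\delta]$ (note that $S^\delta = \{S\}$ when $|S|=1$).

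For the inductive step, suppose $|S| \geq 2$ and pick any $s \in S$; let $S' \coloneqq S \smallsetminus \{s\}$. Applying Lemma \ref{lSES} to the partition of $S \sqcup R$ whose blocks are $\{s\}$, $S'$, and the singletons of $R$ (taking $S_1 = \{s\}$ and $S_2 = S'$) gives the internal decomposition
\[
T_n[S|R^\delta] \;=\; T_n[\{s\}|S'|R^\delta] \;\oplus\; T_n[S'|R^\delta] \;\oplus\; T_n[\{s\}|R^\delta]
\]
of subgroups of $T_n[S|R^\delta]$. Since $T_n[\cdot]$ depends only on the underlying unordered partition, the first summand may be rewritten as $T_n[S'|(\{s\}\sqcup R)^\delta]$ and the third as $T_n[(\{s\}\sqcup R)^\delta]$. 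Because $|S'| < |S|$, the inductive hypothesis applies to the first two summands, yielding
\[
T_n[S|R^\delta] \;=\; \bigoplus_{\varnothing \neq Q \subseteq S'} T_n[(Q \sqcup \{s\} \sqcup R)^\delta] \;\oplus\; \bigoplus_{\varnothing \neq Q \subseteq S'} T_n[(Q \sqcup R)^\delta] \;\oplus\; T_n[(\{s\} \sqcup R)^\delta].
\]
Each non-empty $Q \subseteq S$ is either a non-empty subset of $S'$ (contributing to the second summand), or has the form $Q' \sqcup \{s\}$ for some unique $Q' \subseteq S'$, with $Q' \neq \varnothing$ contributing to the first summand and $Q' = \varnothing$ contributing to the third. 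This reindexing identifies the right-hand side with $\bigoplus_{\varnothing \neq Q \subseteq S} T_n[(Q \sqcup R)^\delta]$, completing the induction.

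The only subtle point is the claim that this is an \emph{equality of subgroups} of $T_n$, not merely an abstract isomorphism. This is inherited directly from Lemma \ref{lSES}, whose decomposition is internal: the three summands there are honest subgroups of the original, they pairwise intersect trivially, and their sum is the whole group. Iterating internal direct sum decompositions preserves this property, so the final decomposition is likewise internal. The bookkeeping in the reindexing step is the only part requiring care, but no substantial obstacle arises.
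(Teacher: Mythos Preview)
Your proof is correct and follows essentially the same approach as the paper: induct on $|S|$, split off a singleton $\{s\}$ via Lemma \ref{lSES}, apply the inductive hypothesis to the two summands involving $S' = S\smallsetminus\{s\}$, and then reindex according to whether $s\in Q$ (and whether $Q=\{s\}$). The only cosmetic difference is that you take $S_1=\{s\}$, $S_2=S'$ whereas the paper takes them in the opposite order, which is immaterial since $T_n[\cdot]$ depends only on the unordered partition.
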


\begin{proof}
The $|S|=1$ case is obvious, so we assume that $|S|\geq 2$ and assume the theorem for smaller values of $|S|$ by induction. Pick an element $s\in S$. Then by Lemma \ref{lSES},
\[
T_n[S|R^\delta] \;=\; T_n[S\!\smallsetminus\!\lbrace s\rbrace | (R\!\sqcup\!\lbrace s\rbrace)^\delta] \;\oplus\; T_n[S\!\smallsetminus\!\lbrace s\rbrace | R^\delta] \;\oplus\; T_n[\lbrace s\rbrace | R^\delta].
\]
Apply the inductive hypothesis to the right-hand side. The proposition then follows from the observation that for $\varnothing\neq Q\subseteq S$, exactly one of the following holds: (i) $s\in Q$ but $Q\neq\lbrace s\rbrace$; (ii) $s\notin Q$; (iii) $Q=\lbrace s\rbrace$.
\end{proof}

We can now use this to deduce the decomposition we want:

\begin{proof}[Proof of Proposition \ref{pDecomp}]
Combining \eqref{eDecompGeneral} (setting $R\coloneqq\varnothing$ and $S\coloneqq\undn$) with \eqref{eDecomp0} we obtain:
\begin{equation}\label{eDecompAdditive}
T_n = \bigoplus_{k=0}^n \; \bigoplus_{\substack{Q\subseteq \undn \\ \lvert Q \rvert = k}} T_n[Q^\delta].
\end{equation}
The action of $G_n$ on $T_n$ permutes the summands via the projection $G_n\to\Sigma_n$ and the obvious action of $\Sigma_n$ on subsets of $\undn$. So:
\begin{itemizeb}
\item[$\cdot$] $T_n^k = T_n[\{n\!-\!k\!+\!1,\ldots,n\}^\delta]$ is preserved by the action of $G_n^k \leq G_n$ on $T_n$.
\item[$\cdot$] The $G_n$-action on $T_n$ preserves the outer direct sum.
\item[$\cdot$] The inner direct sum is the induced module $\mathrm{Ind}_{G_n^k}^{G_n} T_n^k = \bZ G_n \otimes_{\bZ G_n^k} T_n^k$.
\end{itemizeb}
This establishes the decomposition of $\bZ G_n$-modules \eqref{eDecomp}. We proved in Lemma \ref{lBijection} above that $\iota_n \colon T_n \to T_{n+1}$ sends $T_n^k$ into $T_{n+1}^k$, and the naturality statement is clear.
\end{proof}

Having established this decomposition we can now define the \emph{height} of a twisted coefficient system:

\begin{defn}\label{dHeight}
The \emph{height} of a functor $T\colon\cB(M,X)\to\ab$ is the height at which the decomposition \eqref{eDecomp} is truncated. More precisely, we define $\mathrm{height}(T)$ by: $\mathrm{height}(T)\leq h$ if and only if $T_n^k = 0$ for all $k>h$ and all $n$. (So in particular $\mathrm{height}(T)=-1$ if and only if $T=0$.)
\end{defn}

\subsection{Height and degree.}
Despite their different definitions, these two notions are in fact equal:

\begin{lem}\label{lem:Equality}
For any functor $T\colon\cB(M,X)\to\ab$, $\mathrm{height}(T) = \mathrm{deg}(T)$.
\end{lem}

The important half of this equality is the inequality $\mathrm{height}(T) \leq \mathrm{deg}(T)$, since having an upper bound on the \emph{height} of a twisted coefficient system is what is needed to prove Theorem \ref{tMain}, whereas it is often easier to find an upper bound on the \emph{degree} in examples.

\begin{proof}
We will use induction on $d$ to prove the statement
\begin{equation}\label{eClaim}\tag{$\text{IH}_d$}
\degree(T)\leq d \;\Leftrightarrow\; \mathrm{height}(T)\leq d
\end{equation}
for all $d\geq -1$, using the decomposition \eqref{eDecompAdditive} above, which we restate as:
\begin{equation}\label{eDecompAdditiveRestated}
T_n \;=\; \bigoplus_{S\subseteq \undn} T_n[S^{\delta}].
\end{equation}
In this notation the height of $T$ is determined by saying that $\mathrm{height}(T)\leq d$ if and only if $T_n[S^\delta]=0$ for all $\lvert S \rvert > d$ and all $n$.

When $d=-1$ the definitions of height and degree coincide. This deals with the base case, so let $d\geq 0$ and assume that (\hyperref[eClaim]{$\text{IH}_{d-1}$}) holds. For all $n$ we have a split short exact sequence $0\to T_n \to T_{n+1} \to \Delta T_n \to 0$. Applying \eqref{eDecompAdditiveRestated}, this is
\[
0\to \bigoplus_{S\subseteq \undn} T_n[S^\delta] \longrightarrow \bigoplus_{R\subseteq \undnplusonescript} T_{n+1} [R^\delta] \longrightarrow \bigoplus_{Q\subseteq \undn} \Delta T_n [Q^\delta] \to 0.
\]
Analysing the maps carefully we see that
\begin{itemizeb}
\item[(a)] $T_n[S^\delta]$ is sent isomorphically onto $T_{n+1}[(S+1)^\delta]$ by the first map.
\item[(b)] $T_{n+1}[(Q\sqcup \{1\})^\delta]$ is sent isomorphically onto $\Delta T_n[(Q-1)^\delta]$ by the second map.
\end{itemizeb}

\noindent ($\Rightarrow$)
Suppose that $\degree(T)\leq d$. Then $\degree(\Delta T)\leq d-1$ by the definition of degree, and so by the inductive hypothesis (\hyperref[eClaim]{$\text{IH}_{d-1}$}), $\mathrm{height}(\Delta T)\leq d-1$. By fact (b) above this implies that
\begin{equation}\label{eStatement1}
T_{n+1}[R^\delta] = 0 \text{ whenever } \lvert R\rvert > d \text{ and } 1\in R.
\end{equation}
For any fixed $k$, the subgroups $\{ T_{n+1}[R^\delta] \;|\; \lvert R\rvert =k \}$ are all abstractly isomorphic via the action of $G_{n+1}$ on $T_{n+1}$. Also note that $d\geq 0$, so that $\lvert R\rvert > 0$, i.e.\ $R\neq\varnothing$. Hence:
\begin{equation}\label{eStatement2}
T_{n+1}[R^\delta] = 0 \text{ for \emph{all} } \lvert R\rvert > d.
\end{equation}
Therefore by (a), $T_n[S^\delta]=0$ for all $\lvert S\rvert > d$; in other words, $\mathrm{height}(T)\leq d$.

\noindent ($\Leftarrow$)
The other direction is simpler. Suppose that $\mathrm{height}(T)\leq d$; then we have the property \eqref{eStatement2} above. This imples that
\begin{equation}\label{eStatement3}
\Delta T_n [Q^\delta] \cong T_{n+1}[((Q+1)\sqcup \{1\})^\delta] = 0 \text{ for all } \lvert Q \rvert > d-1.
\end{equation}
Hence $\mathrm{height}(\Delta T)\leq d-1$, so we also have $\degree(\Delta T)\leq d-1$ by the inductive hypothesis (\hyperref[eClaim]{$\text{IH}_{d-1}$}). By definition this implies that $\degree(T)\leq d$, as required.
\end{proof}

\begin{rmk}\label{rmk:compare-height-and-degree}
The notion of \emph{height} in this paper is the same as the notion of degree in \cite{Betley2002Twistedhomologyof} (for twisted coefficient systems for symmetric groups) and \cite{Dwyer1980Twistedhomologicalstability} (for general linear groups), and goes back to Eilenberg and MacLane~\cite[\S 9]{EilenbergMac1954groupsHnII}. On the other hand, the notion of \emph{degree} in this paper is in the same spirit as the notion of degree in \cite{Ivanov1993homologystabilityTeichmuller}, \cite{CohenMadsen2009Surfacesinbackground} and \cite{Boldsen2012Improvedhomologicalstability} (for mapping class groups of surfaces) and \cite{Randal-WilliamsWahl2017Homologicalstabilityautomorphism} (for automorphism groups in a general categorical setting). See \cite{Palmer2017comparisontwistedcoefficient} for a detailed account of various notions of \emph{height} and \emph{degree} in the literature. Lemma \ref{lem:Equality} provides a link between these two different viewpoints (see also Remark 3.16 of \cite{Palmer2017comparisontwistedcoefficient}).
\end{rmk}

We finish this section with a few immediate facts about the degree of a twisted coefficient system.

\begin{lem}\label{lDegreeFacts}
For twisted coefficient systems $T,T^\prime \colon \cB(M,X)\to\ab$ and a fixed abelian group $A$,
\begin{itemizeb}
\item[\textup{(a)}] $\degree(T\oplus T^\prime) = \mathrm{max} \{ \degree(T), \degree(T^\prime) \}$,
\item[\textup{(b)}] $\degree(T\otimes A) \leq \degree(T)$,
\item[] and more generally, for $\degree(T)$ and $\degree(T^\prime)$ non-negative,
\item[\textup{(c)}] $\degree(T\otimes T^\prime) \leq \degree(T) + \degree(T^\prime)$,
\end{itemizeb}
where $\oplus$ and $\otimes$ are defined objectwise.
\end{lem}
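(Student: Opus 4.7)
\bigskip

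\textbf{Proof proposal.} All three statements will be proved by induction, with the central observation being that, since $\iota \colon \mathrm{id} \Rightarrow S$ has a left-inverse $\pi \colon S \Rightarrow \mathrm{id}$, the short exact sequence
\begin{equation*}
0 \longrightarrow T \xrightarrow{\;T\iota\;} T\circ S \longrightarrow \Delta T \longrightarrow 0
\end{equation*}
of functors $\cB(M,X) \to \ab$ is \emph{split}. Hence $T \circ S \cong T \oplus \Delta T$ naturally in $T$, and moreover $\Delta$ is an exact additive endofunctor of $\ab^{\cB(M,X)}$ which also commutes with tensoring (objectwise) by any fixed abelian group. The plan is now to feed this observation into each case.

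For (a), the formula $\Delta(T \oplus T') = \Delta T \oplus \Delta T'$ is immediate, so by induction on $\max\{\degree(T),\degree(T')\}$, using the base case when both functors are zero, one gets $\degree(T \oplus T') = \max\{\degree(\Delta T),\degree(\Delta T')\} + 1 = \max\{\degree(T),\degree(T')\}$. For (b), the split sequence above tensored with $A$ stays exact (because it is split), so $\Delta(T \otimes A) = \Delta T \otimes A$, and induction on $\degree(T)$ gives $\degree(T \otimes A) \leq \degree(\Delta T) + 1 = \degree(T)$ (with the convention that the zero functor has degree $-1$).

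Part (c) is the main point. Using the splitting $T \circ S \cong T \oplus \Delta T$ objectwise, we have
\begin{equation*}
(T \otimes T') \circ S \;=\; (T \circ S) \otimes (T' \circ S) \;\cong\; (T \otimes T') \;\oplus\; (T \otimes \Delta T') \;\oplus\; (\Delta T \otimes T') \;\oplus\; (\Delta T \otimes \Delta T'),
\end{equation*}
and one checks that the natural inclusion $(T \otimes T')\iota$ identifies with the inclusion of the first summand. Thus
\begin{equation*}
\Delta(T \otimes T') \;\cong\; (T \otimes \Delta T') \;\oplus\; (\Delta T \otimes T') \;\oplus\; (\Delta T \otimes \Delta T').
\end{equation*}
Applying (a) and the inductive hypothesis for (c) to each summand bounds $\degree(\Delta(T \otimes T'))$ by $\degree(T) + \degree(T') - 1$, and the result follows.

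The only subtlety is the bookkeeping of the induction in (c): one should induct on $\degree(T) + \degree(T')$, taking as base case the situation where both functors are of degree $0$ (equivalently, constant, by Example \ref{egDegreeZero}), in which case $\Delta T = \Delta T' = 0$ makes $\Delta(T \otimes T') = 0$ and the conclusion is immediate. The boundary cases in which exactly one of $\degree(T), \degree(T')$ is zero also need a brief mention, but they are handled by the same formula above together with part (b). I do not expect any serious obstacle beyond this indexing care.
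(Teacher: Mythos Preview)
Your proposal is correct and follows essentially the same approach as the paper: the paper's proof records exactly the three identities $\Delta(T\oplus T')=\Delta T\oplus\Delta T'$, $\Delta(T\otimes A)=\Delta T\otimes A$ (via splitness), and $\Delta(T\otimes T')=(T\otimes\Delta T')\oplus(\Delta T\otimes T')\oplus(\Delta T\otimes\Delta T')$, and then inducts. Your write-up simply unpacks the derivation of the last identity from the splitting $T\circ S\cong T\oplus\Delta T$ and is a little more careful about the induction indexing; the one small gap you flag (degree-$0$ functor versus constant abelian group in the base case) is present in the paper's sketch as well and is harmless once one notes that a degree-$0$ functor is naturally isomorphic to a constant one (Example~\ref{egDegreeZero}).
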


\begin{proof}
Fact (a) follows by induction from the fact that $\Delta (T\oplus T^\prime) \cong \Delta T \oplus \Delta T^\prime$. Fact (b) follows from the fact that $\Delta (T\otimes A) \cong \Delta T \otimes A$, which is true because tensoring a \emph{split} short exact sequence with $A$ preserves split-exactness. Fact (c) is proved by induction with base case (b), and inductive step using the fact that
\[
\Delta (T\otimes T^\prime) \;\cong\; (T\otimes \Delta T^\prime) \oplus (\Delta T \otimes T^\prime) \oplus (\Delta T \otimes \Delta T^\prime).\qedhere
\]
\end{proof}


\section{Examples of twisted coefficient systems}\label{sExamples}

In this section we give some examples of twisted coefficient systems to which Theorem \ref{tMain} applies, and use them to deduce Corollaries \ref{cHomology} and \ref{cPartitions}. These examples are all pulled back from twisted coefficient systems for $\Sigma$ along the canonical functor $\cB(M,X) \to \Sigma$. After this, we also briefly discuss further examples of twisted coefficient systems, related to surface braid groups, which do not arise in this manner.

\subsection[Examples of twisted coefficient systems for Sigma and proofs of the corollaries]{Examples of twisted coefficient systems for $\Sigma$ and proofs of the corollaries.}

Recall (Definition \ref{dSigma}) that the category $\Sigma$ has non-negative integers as objects and partially-defined injections as morphisms. We will give some examples of functors $T\colon \Sigma\to\ab$, which are twisted coefficient systems for the special case $M=\bR^\infty$ and $X=*$ since $\cB(\bR^\infty)\cong\Sigma$.

Recall (see \S\ref{ssTCS-specialcase}) that there is a canonical functor $U\colon \cB(M,X)\to\Sigma$ for each $M$ and $X$ (\cf Remark 4.6 of \cite{Palmer2017comparisontwistedcoefficient}), so these examples also give twisted coefficient systems in general. Moreover, one may check (see \S\ref{sHeightDegree} for notation) that $\Delta(T\circ U) \cong \Delta T\circ U$, so by induction $\deg(T\circ U) = \deg(T)$ (\cf Lemma 4.2 of \cite{Palmer2017comparisontwistedcoefficient}), and also that $(T\circ U)_n^k \cong T_n^k$, so $\mathrm{height}(T\circ U) = \mathrm{height}(T)$.

\begin{eg}\label{egHomology}
Fix a path-connected based space $(Z,*)$, an integer $q\geq 0$ and a field $F$. The functor $\hat{T}_Z \colon \Sigma \to \mathsf{Top}$ is defined on objects by $n\mapsto Z^n$, and on morphisms as follows: given a partially-defined injection $j\colon \{1,\ldots,m\}\dashrightarrow \{1,\ldots,n\}$ in $\Sigma$, define $\hat{T}_Z (j) \colon Z^m \to Z^n$ to be the map
\[
(z_1, \ldots, z_m) \mapsto (z_{j^{-1}(1)}, \ldots, z_{j^{-1}(n)}),
\]
where $z_{\varnothing}$ is taken to mean the basepoint $*$. For example:
\begin{center}
\begin{tikzpicture}
[x=1mm,y=1mm]
\node (bl1) at (0,0) [fill,circle,inner sep=1pt] {};
\node (bl2) at (0,2) [fill,circle,inner sep=1pt] {};
\node (bl3) at (0,4) [fill,circle,inner sep=1pt] {};
\node (br1) at (10,0) [fill,circle,inner sep=1pt] {};
\node (br2) at (10,2) [fill,circle,inner sep=1pt] {};
\node (br3) at (10,4) [fill,circle,inner sep=1pt] {};
\node (br4) at (10,6) [fill,circle,inner sep=1pt] {};
\draw (bl1) .. controls (5,0) and (5,2) .. (br2);
\draw (bl2) .. controls (5,2) and (5,6) .. (br4);
\node at (13,2) [anchor=west] {$: \quad (z_1,z_2,z_3) \mapsto (*,z_1,*,z_2).$};
\end{tikzpicture}
\end{center}
The functor $T_{Z,q,F} \colon \Sigma\to\mathsf{Ab}$ is then the composite functor $H_q(-;F)\circ \hat{T}_Z$.
\end{eg}
\begin{lem}\label{lEgHomology}
The twisted coefficient system $T_{Z,q,F}$ has degree at most $\lfloor \tfrac{q}{h+1} \rfloor$, where for a path-connected space $Z$,
\[
h = \hconn_F(Z) \coloneqq \mathrm{max} \{ k\geq 0 \;|\; \widetilde{H}_i(Z;F)=0 \text{ for all } i\leq k \} \geq 0.
\]
\end{lem}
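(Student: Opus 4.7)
The plan is to argue by induction on $q$, first identifying $\Delta T_{Z,q,F}$ as a direct sum of shifts of lower-$q$ twisted coefficient systems of the same form, and then invoking Lemma~\ref{lDegreeFacts}.

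The main preliminary computation is to understand $\hat T_Z$ applied to the stabilisation endofunctor $S$ of $\Sigma$. For the natural morphism $\iota_n\colon n\to n+1$ (the partial injection $i\mapsto i+1$), the induced map $\hat T_Z(\iota_n)\colon Z^n\to Z^{n+1}$ is the inclusion $(z_1,\ldots,z_n)\mapsto(\ast,z_1,\ldots,z_n)$. More generally, for any $j\colon m\to n$ in $\Sigma$ the morphism $Sj\colon m+1\to n+1$ fixes $1\mapsto 1$ and shifts the rest of $j$ up by one, which gives the product-form identity $\hat T_Z(Sj)=\mathrm{id}_Z\times\hat T_Z(j)$ as maps $Z\times Z^m\to Z\times Z^n$. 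Combining this with the K\"unneth theorem over the field $F$ and the canonical splitting $H_*(Z;F)=H_*(\ast;F)\oplus\widetilde H_*(Z;F)$ provided by the basepoint yields the natural decomposition
\[
H_q(Z^{n+1};F)\;\cong\;H_q(Z^n;F)\;\oplus\;\bigoplus_{p\geq 1}\widetilde H_p(Z;F)\otimes H_{q-p}(Z^n;F),
\]
in which $T_{Z,q,F}(\iota_n)$ is the inclusion of the first summand, and $T_{Z,q,F}(Sj)$ acts on the $p$-th summand as $\mathrm{id}_{\widetilde H_p(Z;F)}\otimes T_{Z,q-p,F}(j)$.

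The main (and mild) obstacle is precisely checking that this K\"unneth splitting is natural for \emph{all} morphisms in $\Sigma$, not just the $\iota_n$; this reduces exactly to the product-form identity $\hat T_Z(Sj)=\mathrm{id}_Z\times\hat T_Z(j)$ above. Taking cokernels and using that $\widetilde H_p(Z;F)=0$ for $p\leq h$ then gives an isomorphism of functors
\[
\Delta T_{Z,q,F}\;\cong\;\bigoplus_{p=h+1}^{q}\widetilde H_p(Z;F)\otimes T_{Z,q-p,F}.
\]

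With this identification in hand, the induction on $q$ is immediate. If $q\leq h$, the sum is empty and so $\Delta T_{Z,q,F}=0$, hence $\deg(T_{Z,q,F})\leq 0=\lfloor q/(h+1)\rfloor$ by Example~\ref{egDegreeZero}. If $q\geq h+1$, then each summation index satisfies $p\geq h+1$, so the inductive hypothesis gives $\deg(T_{Z,q-p,F})\leq\lfloor(q-p)/(h+1)\rfloor\leq\lfloor q/(h+1)\rfloor-1$. Parts (a) and (b) of Lemma~\ref{lDegreeFacts} then yield $\deg(\Delta T_{Z,q,F})\leq\lfloor q/(h+1)\rfloor-1$, so $\deg(T_{Z,q,F})=\deg(\Delta T_{Z,q,F})+1\leq\lfloor q/(h+1)\rfloor$, completing the induction.
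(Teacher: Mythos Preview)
Your proof is correct and follows essentially the same route as the paper: both identify $\Delta T_{Z,q,F}$ via the K\"unneth decomposition as $\bigoplus_{p=h+1}^{q} H_p(Z;F)\otimes T_{Z,q-p,F}$ and then induct on $q$ using Lemma~\ref{lDegreeFacts}. Your treatment of naturality (via the identity $\hat T_Z(Sj)=\mathrm{id}_Z\times\hat T_Z(j)$) is slightly more explicit than the paper's, which simply asserts the short exact sequence is natural and discusses why only in Remark~\ref{rEgHomology}.
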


\begin{proof}
First note that the K{\"u}nneth theorem gives us natural split short exact sequences
\begin{equation}\label{eKunnethSES}
0 \to H_q(Z^n;F) \longrightarrow H_q(Z^{n+1};F) \longrightarrow \bigoplus_{i=1}^q H_{q-i}(Z^n;F) \otimes_F H_i(Z;F) \to 0,
\end{equation}
which together with the fact that $H_i(Z;F)=0$ for $1\leq i\leq h$ implies that
\begin{equation}\label{eDeltaT}
\Delta T_{Z,q,F} = \bigoplus_{i=h+1}^q T_{Z,q-i,F} \otimes_F H_i(Z;F).
\end{equation}
So, by Lemma \ref{lDegreeFacts} above, $\degree(T_{Z,q,F}) \leq 1 + \mathrm{max} \{ \degree(T_{Z,q-i,F}) \;|\; h+1\leq i\leq q \}$. Abbreviating $\degree(T_{Z,q,F})$ to $t_q$, we have the recurrence inequality
\begin{equation}\label{eRecurr1}
t_q \;\leq\; 1 + \mathrm{max} \{ t_0, \ldots, t_{q-h-1} \} .
\end{equation}
Note that $H_0(Z^n;F) \to H_0(Z^{n+1};F)$ is the identity map $F\to F$ for all $n$, so $\Delta T_{Z,0,F} = 0$, and hence $\degree(T_{Z,0,F})=0$. Also note that for $1\leq q\leq h$, $\hconn_F(Z)\geq q$ implies that $\hconn_F(Z^n)\geq q$ for all $n$ (by the K{\"u}nneth theorem), so $T_{Z,q,F}(n) = H_q(Z^n;F) = 0$, and hence $\degree(T_{Z,q,F})=-1\leq 0$. So we also have the initial conditions
\begin{equation}\label{eRecurr2}
t_0, t_1, \ldots, t_h \leq 0.
\end{equation}
It now remains to prove that the recurrence inequality \eqref{eRecurr1} and the initial conditions \eqref{eRecurr2} imply that $t_q \leq \lfloor \tfrac{q}{h+1} \rfloor$ for all $q\geq 0$. This will be done by induction on $q$. The base case is $0\leq q\leq h$ which is covered by the initial conditions \eqref{eRecurr2}. Assume that $q\geq h+1$. Then:
\begin{align*}
t_q &\leq 1 + \mathrm{max} \{ t_0, \ldots, t_{q-h-1} \} \\
&\leq 1+ \lfloor \tfrac{q-h-1}{h+1} \rfloor \\
&= \lfloor \tfrac{q}{h+1} \rfloor \qedhere
\end{align*}
\end{proof}

\begin{rmk}
See also \cite[Proposition 5.2]{Hanbury2009Homologicalstabilityof}, where it is proved (in the terminology of this paper) that the \emph{height} of $T_{Z,q,F}$ is at most $q$.
\end{rmk}

\begin{rmk}\label{rEgHomology}
If, in Lemma \ref{lEgHomology}, we replace $F$ by a general principal ideal domain $R$ (such as $\bZ$), the short exact sequence \eqref{eKunnethSES} becomes
\begin{align}\label{eKunnethSESb}
\begin{split}
0\to H_q(Z^n;R) \longrightarrow H_q(Z^{n+1};R) \longrightarrow &\bigoplus_{i=1}^q H_{q-i}(Z^n;R) \otimes_R H_i(Z;R) \\
\oplus &\bigoplus_{i=1}^q \mathrm{Tor}^R(H_{q-i}(Z^n;R),H_{i-1}(Z;R)) \to 0.
\end{split}
\end{align}
Here, as in \eqref{eKunnethSES}, we have used the splitting in the K{\"u}nneth short exact sequence to move some summands from the left-hand side to the right-hand side. However, this splitting is not always natural, and so \eqref{eKunnethSESb} is not natural for general principal ideal domains $R$. When $R=F$ is a field, the Tor terms vanish and the K{\"u}nneth short exact sequence is of the form $0\to A\to B\to 0\to 0$, so its splitting is certainly natural in this case.\footnote{This just comes from the fact that a natural transformation is invertible if it is objectwise invertible.} This is the reason why the short exact sequence \eqref{eKunnethSES} \emph{is} natural -- which was necessary to deduce the isomorphism of functors \eqref{eDeltaT}. More generally, the Tor terms vanish if $H_*(Z;R)$ is flat over $R$ in each degree, so the most general version of Example \ref{egHomology} works for a principal ideal domain $R$ and path-connected space $Z$ satisfying this condition. In particular, if $H_*(Z;\bZ)$ is torsion-free, this example works for homology with integral coefficients.
\end{rmk}

\begin{proof}[Proof of Corollary \ref{cHomology}]
The first statement follows directly from Theorem \ref{tMain} applied to Example~\ref{egHomology}, using Lemma \ref{lEgHomology} and Remark \ref{rEgHomology} to compute the degree of the twisted coefficient system in this case. The improved ranges follow from Remark \ref{rImprovedRanges}.
\end{proof}

\begin{notation}
Write $\bN = \bZ_{>0}$. For $k\in\bN$ and $\lambda = (\lambda_1,\ldots,\lambda_k)\in \bN^k$ define $\lvert\lambda\rvert \coloneqq \lambda_1 + \cdots + \lambda_k$. For $\ell\in\bN$, define $\lambda \vdash \ell$ to be the statement
\begin{center}
$\lambda\in\bN^k$ for some $k\in\bN$ and $\lvert\lambda\rvert = \ell$.
\end{center}
In words, $\lambda$ is an \emph{ordered partition} of $\ell$ of length $k$. For a set $S$ with $\lvert S\rvert \geq \ell$, an \emph{ordered decomposition} of $S$ of type $\lambda$ is a tuple $(S_1,\ldots,S_k)$ of pairwise disjoint subsets $S_i \subseteq S$ such that $\lvert S_i \rvert = \lambda_i$. Note that this decomposes $S$ into either $k$ or $k+1$ subsets, depending on whether $\lvert S\rvert = \ell$ or $\lvert S\rvert > \ell$. As a final piece of notation, define $\lambda[n]=\lambda$ for $n=\ell$ and
\[
\lambda[n] = (n-\ell,\lambda_1,\ldots,\lambda_k)
\]
for $n>\ell$, so that $\lambda[n] \vdash n$.
\end{notation}

\begin{eg}\label{egSubsets}
Let $\Se$ be the category of finite sets and partially-defined functions. Note that this is equivalent\footnote{Although not isomorphic, for essentially set-theoretic reasons.} to the category $\Setp$ of finite pointed sets. There is a free functor $\bZ (-)\colon \Se \to \mathsf{Ab}$ taking $S$ to $\bZ S$ and taking a partially-defined function $j\colon S \dashrightarrow R$ to the homomorphism
\begin{equation}\label{eEgSubsets}
\sum_{s\in S} n_s s \mapsto \sum_{s\in S} n_s j(s),
\end{equation}
where $j(s)$ means $0\in\bZ R$ if $j$ is undefined on $s$. So any functor $\Sigma \to \Se$ gives a twisted coefficient system for $\Sigma$ by composing with $\bZ (-)$.

We now define a functor $P_\lambda \colon \Sigma \to \Se$ associated to any $\lambda \vdash \ell$. On objects, it is defined by
\[
P_\lambda(n) = \begin{cases}
\{ \text{ordered decompositions of } \undn \text{ of type } \lambda \} & n\geq\ell \\
\varnothing & n<\ell.
\end{cases}
\]
Given a partially-defined injection $j\colon \{1, \ldots, m\} \dashrightarrow \{1, \ldots, n\}$, we define $P_\lambda(j)\colon P_\lambda(m) \dashrightarrow P_\lambda(n)$ as follows. First, if $m<\ell$ or $n<\ell$ then $P_\lambda(j)$ is the empty function. If $m,n\geq\ell$ and $(S_1,\ldots,S_k) \in P_\lambda(m)$, then $P_\lambda(j)$ is defined on $(S_1,\ldots,S_k)$ exactly when $j$ is defined on every element of $\bigcup_{i=1}^k S_i$, in which case its value is $(j(S_1),\ldots,j(S_k)) \in P_\lambda(n)$.

Note that, when $\lambda=1$, the functor $P_\lambda$ is simply the inclusion of $\Sigma$ as a subcategory of $\Se$. There is a natural action of $\Sigma_n$ on $P_\lambda(n)$, since $\Sigma_n$ is the automorphism group of $n$ in $\Sigma$, and an isomorphism
\[
\bZ P_\lambda(n) \;\cong\; \bZ \bigl[ \Sigma_n / \Sigma_{\lambda[n]} \bigr]
\]
of $\bZ[\Sigma_n]$-modules, where we write $\Sigma_\mu$ for the subgroup $\Sigma_{\mu_1} \times \cdots \times \Sigma_{\mu_k}$ of $\Sigma_{\lvert\mu\rvert}$. Note that the right-hand side is only defined for $n\geq\ell$. In particular, when $\lambda=(1,\ldots,1)$ with $\lvert\lambda\rvert = \ell$, we have $\bZ P_\lambda(n) \cong \bZ[\Sigma_n / \Sigma_{n-\ell}]$.

We have the following isomorphisms in $\mathsf{Ab}$ for $\lvert\lambda\rvert\geq 2$:
\begin{align*}
\Delta \bZ P_\lambda(n) &\cong \bZ \bigl\lbrace (S_1,\ldots,S_k)\in P_\lambda(n+1) \mid 1\in \textstyle{\bigcup}_{i=1}^k S_i \bigr\rbrace \\
&\cong \bZ \bigl( \textstyle{\bigsqcup}_{i=1}^k P_{\lambda-e_i}(n) \bigr) \\
&\cong \textstyle{\bigoplus}_{i=1}^k \bZ P_{\lambda-e_i}(n),
\end{align*}
where $\lambda-e_i$ is the ordered partition $(\lambda_1,\ldots,\lambda_i-1,\ldots,\lambda_k)$.\footnote{And where $(\lambda_1,\ldots,\lambda_k)$ means $(\lambda_1,\ldots,\lambda_{a-1},\lambda_{a+1},\ldots,\lambda_k)$ if $\lambda_a=0$.} The first and third isomorphisms are obviously natural isomorphisms of functors $\Sigma \to \mathsf{Ab}$, and one can also explicitly check that the second isomorphism is natural. Hence we have an isomorphism
\begin{equation}\label{eDeltaPlambda}
\Delta \bZ P_\lambda \;\cong\; \bigoplus_{i=1}^k \bZ P_{\lambda-e_i}
\end{equation}
for $\lvert\lambda\rvert\geq 2$. This allows us to prove:

\begin{lem}\label{lEgSubsets}
The twisted coefficient system $\bZ P_\lambda$ has degree $\lvert\lambda\rvert$.
\end{lem}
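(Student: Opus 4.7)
The plan is to prove this by induction on $\lvert\lambda\rvert$, using the cokernel formula \eqref{eDeltaPlambda} established just before the statement of the lemma, together with Lemma \ref{lDegreeFacts}(a) which gives the degree of a direct sum.

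For the base case $\lvert\lambda\rvert = 1$ (so $\lambda = (1)$), I would compute $\Delta \bZ P_{(1)}$ directly from the definition. The functor $P_{(1)}$ sends $\underline{n}$ to the set of singleton subsets $\{\{1\},\ldots,\{n\}\}$ of $\underline{n}$, and the stabilisation morphism $\iota_n$ is induced by shifting indices, so it identifies $\bZ P_{(1)}(\underline{n})$ with the free $\bZ$-submodule of $\bZ P_{(1)}(\underline{n+1})$ spanned by $\{\{2\},\ldots,\{n+1\}\}$. Hence $\Delta \bZ P_{(1)}(\underline{n}) \cong \bZ\langle \{1\} \rangle \cong \bZ$. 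Moreover, for any partially-defined injection $j$, the stabilised morphism $Sj$ fixes position $1$ (i.e.\ it is defined at $1$ and sends $1$ to $1$), so the induced endomorphism on $\Delta \bZ P_{(1)}$ is the identity. By Example \ref{egDegreeZero}, $\Delta \bZ P_{(1)}$ is a constant nonzero functor, so $\degree(\Delta \bZ P_{(1)}) = 0$, and therefore $\degree(\bZ P_{(1)}) = 1 = \lvert\lambda\rvert$.

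For the inductive step, suppose $\lvert\lambda\rvert \geq 2$ and the lemma holds for all ordered partitions of smaller $\lvert\cdot\rvert$. By \eqref{eDeltaPlambda},
\[
\Delta \bZ P_\lambda \;\cong\; \bigoplus_{i=1}^{k} \bZ P_{\lambda-e_i},
\]
and each $\lambda - e_i$ satisfies $\lvert\lambda-e_i\rvert = \lvert\lambda\rvert - 1 \geq 1$ (with the convention from the footnote that zero entries are deleted). By the inductive hypothesis, $\degree(\bZ P_{\lambda-e_i}) = \lvert\lambda\rvert - 1$ for each $i$, and by Lemma \ref{lDegreeFacts}(a), $\degree(\Delta \bZ P_\lambda) = \lvert\lambda\rvert - 1$. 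Therefore $\degree(\bZ P_\lambda) = 1 + (\lvert\lambda\rvert - 1) = \lvert\lambda\rvert$, completing the induction.

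There is no real obstacle: the only nontrivial computation has already been done in the paragraph preceding the lemma, where the natural isomorphism \eqref{eDeltaPlambda} was established by analysing which ordered decompositions of $\underline{n+1}$ lie outside the image of the stabilisation. The use of Lemma \ref{lDegreeFacts}(a) is crucial (rather than merely the inequality version), because it gives an \emph{equality} for the degree of a direct sum, which is what yields the matching lower bound on $\degree(\bZ P_\lambda)$ and hence the exact equality claimed in the lemma.
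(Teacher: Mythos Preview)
Your proof is correct and follows essentially the same inductive strategy as the paper, using \eqref{eDeltaPlambda} together with Lemma \ref{lDegreeFacts}(a) for the inductive step. The only difference is in the handling of the base case $\lvert\lambda\rvert=1$: you compute directly that $Sj$ always fixes position $1$, so $\Delta \bZ P_{(1)}$ sends every morphism to the identity on $\bZ$, whereas the paper argues more abstractly that since morphisms in $\Sigma$ admit one-sided inverses, their images in $\mathrm{End}_{\mathsf{Ab}}(\bZ)$ must be automorphisms; your direct verification is arguably cleaner (and is exactly the computation alluded to in Remark \ref{rEgSubsets} for general $R$).
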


\begin{proof}
The proof is by induction on $\lvert\lambda\rvert$. First, if $\lvert\lambda\rvert = 1$ then $\Delta \bZ P_\lambda(n) \cong \bZ$ for all $n\geq 0$. Hence all morphisms in $\Sigma$ are sent by $\Delta \bZ P_\lambda$ to endomorphisms of $\bZ$ in $\mathsf{Ab}$. But all morphisms in $\Sigma$ have one-sided inverses, so their images in $\mathsf{Ab}$ are endomorphisms of $\bZ$ admitting one-sided inverses, and hence automorphisms. Thus $\Delta\Delta \bZ P_\lambda = 0$, and so $\bZ P_\lambda$ has degree $1$ by definition.

Now assume that $\lvert\lambda\rvert \geq 2$. By \eqref{eDeltaPlambda}, Lemma \ref{lDegreeFacts} and the inductive hypothesis, we have:
\[
\deg(\Delta \bZ P_\lambda) = \deg \Bigl( \bigoplus_{i=1}^k \bZ P_{\lambda-e_i} \Bigr) = \max_{i=1,\ldots,k} (\deg(\bZ P_{\lambda-e_i})) = \lvert\lambda\rvert -1,
\]
so $\deg(\bZ P_\lambda) = \lvert\lambda\rvert$ by the definition of degree.
\end{proof}
\end{eg}

\begin{rmk}\label{rEgSubsets}
Given an arbitrary ring $R$, there is also a functor $R(-)\colon \Se \to \mathsf{Ab}$ taking a set $S$ to the free $R$-module generated by $S$ (viewed as an abelian group) and with morphisms defined by the same formula \eqref{eEgSubsets} as for $\bZ(-)$. Thus we have twisted coefficient systems $RP_\lambda \colon \Sigma \to \mathsf{Ab}$ associated to any ring $R$ and ordered partition $\lambda$. Just as in the case $R=\bZ$, we have isomorphisms $RP_\lambda(n) \cong R[\Sigma_n/\Sigma_{\lambda[n]}]$ of $R[\Sigma_n]$-modules for all $n$, and the twisted coefficient system $RP_\lambda$ has degree $\lvert \lambda \rvert$. To see this, we can adapt the proof of Lemma \ref{lEgSubsets} directly, as long as we are slightly more careful about the base case. It is not in general true that the monoid $\mathrm{End}_{\mathsf{Ab}}(R)$ has the property that any one-sided inverse is a two-sided inverse (consider $R=\prod^\infty \bZ$ for example), so the base case does not come for free. However, one can explicitly compute the maps $\Delta RP_\lambda(m) \to \Delta RP_\lambda(n)$ induced by any $j\colon \{1, \ldots, m\} \dashrightarrow \{1, \ldots, n\}$ in $\Sigma$, and see that they are just the identity on $R$.
\end{rmk}

\begin{proof}[Proof of Corollary \ref{cPartitions}]
The first statement follows directly from Theorem \ref{tMain} applied to Example~\ref{egSubsets}, using Lemma \ref{lEgSubsets} and Remark \ref{rEgSubsets} to compute the degree of the twisted coefficient system in this case. The improved ranges follow from Remark \ref{rImprovedRanges}.
\end{proof}

\subsection{Examples of twisted coefficient systems for surface braid groups.}\label{ss:examples-braid-groups}

\paragraph*{Examples derived from LKB representations.}

The \emph{Lawrence-Krammer-Bigelow representations} are a family of representations of the braid groups first introduced by Lawrence~\cite{Lawrence1990HomologicalrepresentationsHecke}. They have since been studied by many people, including Bigelow~\cite{Bigelow2001Braidgroupslinear} and Krammer~\cite{Krammer2002Braidgroupslinear}, who proved (independently) that a certain one of these representations is faithful, thereby proving the linearity of the braid groups. These representations depend on a choice of positive integer $m$, and come in several flavours, defined as the (ordinary, reduced, Borel-Moore) homology of a certain covering space of the configuration space $C_m(\bC - \{ 1,2,\ldots,n \})$, with the $n$th braid group acting via compactly-supported diffeomorphisms of the punctured plane.

These representations, as well as further variants allowing more general punctured surfaces (giving rise to representations of surface braid groups), will be studied in detail in future work. Here we just remark that certain special cases are known to assemble into finite-degree twisted coefficient systems on the category $\Ubeta$ (\cf Remark \ref{rRelatedResults}), including the unreduced Burau representations \cite[Example 4.15]{Randal-WilliamsWahl2017Homologicalstabilityautomorphism}, the reduced Burau representations~\cite[Corollary 2.36]{Soulie2017LongMoodyconstruction} (both corresponding to $m=1$) and the \emph{Lawrence-Krammer representations}~\cite[Proposition 2.40]{Soulie2017LongMoodyconstruction} (corresponding to $m=2$).

\paragraph*{Further examples for braid groups.}

Another example of a finite-degree twisted coefficient system on $\Ubeta$ is constructed in \cite[Proposition 2.29]{Soulie2017LongMoodyconstruction} from the \emph{Tong-Yang-Ma representations}~\cite{TongYangMa1996newclassrepresentations}. Moreover, the main result of \cite{Soulie2017LongMoodyconstruction} is a functorial version of the \emph{Long-Moody construction} \cite{Long1994Constructingrepresentationsbraid}, which produces a new twisted coefficient system on $\Ubeta$ from an old one, increasing the degree by exactly one in the process. Iterating this construction therefore gives many more examples of finite-degree twisted coefficient systems on $\Ubeta$.


\section{A twisted Serre spectral sequence}\label{sTSSS}

To prove Theorem \ref{tMain} we will need a generalisation of the basic Serre spectral sequence, allowing the base space to be equipped with a local coefficient system. It is a special case of (the homology version of) an \emph{equivariant} generalisation of the Serre spectral sequence constructed by Moerdijk and Svensson in \cite{MoerdijkSvensson1993equivariantSerrespectral}. This section gives a brief description of their spectral sequence and deduces the particular case that we will need. We note that the required spectral sequence may also be deduced as a special case of \cite[\S 20.4]{MaySigurdsson2006Parametrizedhomotopytheory}, but we will not do this explicitly here.

\subsection{Recollections about homology with local coefficients.}\label{ss:homology-with-local-coefficients}

We start by recalling a basepoint-independent description of (co)homology with local coefficients (in the non-equivariant setting). Let $R$ be a commutative ring with unit. We first describe homology and cohomology of categories, which we view as functors
\begin{equation}\label{eq:cohomology-of-categories}
H_* \colon \catr \longrightarrow \grrmod \qquad\text{and}\qquad H^* \colon \catrr \longrightarrow \grrmod
\end{equation}
respectively, where the target is the category of graded $R$-modules and the sources are as follows. An object of $\catr$ is a category $C$ and a functor $F \colon C \to R\text{-mod}$ and a morphism from $(C,F)$ to $(D,G)$ is a functor $\phi \colon C \to D$ and a natural transformation $F \Rightarrow G \circ \phi$. An object of $\catrr$ is a category $C$ and a functor $F \colon C^{\mathrm{op}} \to R\text{-mod}$ and a morphism from $(C,F)$ to $(D,G)$ is a functor $\phi \colon D \to C$ and a natural transformation $F \circ \phi^{\mathrm{op}} \Rightarrow G$. These are defined using the derived functors Ext and Tor for representations of categories \cite{Mitchell1972Ringswithseveralobjects,BauesWirsching1985Cohomologysmallcategories}:\footnote{In \cite[\S 12]{Mitchell1972Ringswithseveralobjects}, the (co)homology of a category $C$ is defined more generally, allowing coefficients in any functor $C^{\mathrm{op}} \times C \to R\text{-mod}$, and in \cite{BauesWirsching1985Cohomologysmallcategories} this is further generalised to functors $FC \to R\text{-mod}$, where $FC$ is a certain category of ``factorisations in $C$''.}
\[
H_*(C,F) \coloneqq \mathrm{Tor}_*^C(R,F) \qquad\text{and}\qquad H^*(C,F) \coloneqq \mathrm{Ext}^*_C(R,F),
\]
where $R$ denotes the constant functor $C \to R\text{-mod}$ respectively $C^{\mathrm{op}} \to R\text{-mod}$ sending every object to the free $R$-module on one generator and sending every morphism to the identity. They may also be computed using the classical Ext and Tor functors for modules over rings, since there is an embedding $\mathsf{Fun}(C,R\text{-mod}) \hookrightarrow RC\text{-mod}$ for any category $C$ and ring $R$, where $RC$ is the \emph{category ring} of $C$ (this was defined in \cite[\S 7]{Mitchell1972Ringswithseveralobjects}, see also \cite[\S 5.1]{Palmer2017comparisontwistedcoefficient}):
\[
H_*(C,F) = \mathrm{Tor}_*^{RC}(R,F) \qquad\text{and}\qquad H^*(C,F) = \mathrm{Ext}^*_{RC^{\mathrm{op}}}(R,F),
\]
where $R$ is considered as an $RC$-module (respectively $RC^{\mathrm{op}}$-module) with the trivial action of the category ring.

\begin{defn}
For a space $Y$ let $\Delta(Y)$ be the category whose objects are all singular simplices in $Y$, and whose morphisms are simplicial operations (generated by face and degeneracy maps). For example, $\Delta(*)$ is the usual simplex category. Denote the fundamental groupoid of $Y$ by $\pi(Y)$, and the standard $n$-simplex by $\Delta^n = \{ (x_0,\ldots,x_n) \in \bR^{n+1} \mid x_i \geq 0,\; x_0 + \cdots + x_n = 1 \}$. There is a canonical functor $v_Y\colon \Delta(Y)\to \pi(Y)$ which takes a singular simplex $\Delta^n\to Y$ to the image of its barycentre $b_n = \bigl(\tfrac{1}{n+1},\ldots,\tfrac{1}{n+1}\bigr)$. A morphism $\Delta^k\xrightarrow{\alpha}\Delta^n\to Y$ is taken to the image of the straight-line path in $\Delta^n$ from $\alpha(b_k)$ to $b_n$. (One may alternatively define a functor using the last vertex $e_n = (0,\ldots,0,1)$ in place of the barycentre $b_n$ --- this is naturally isomorphic to $v_Y$.)

A \emph{coefficient system} for homology (resp.\ cohomology) is a covariant (resp.\ contravariant) functor $\Delta(Y) \to R\text{-mod}$. It is a \emph{local coefficient system} if it factors up to natural isomorphism through $v_Y$. Homology and cohomology of spaces with local coefficients are then functors
\begin{equation}\label{eq:cohomology-of-spaces}
H_* \colon \topr \longrightarrow \grrmod \qquad\text{and}\qquad H^* \colon \toprr \longrightarrow \grrmod,
\end{equation}
where an object of $\topr$ is a space $Y$ and a functor $F \colon \pi(Y) \to R\text{-mod}$ and a morphism from $(Y,F)$ to $(Z,G)$ is a continuous map $f \colon Y \to Z$ and a natural transformation $F \Rightarrow G \circ \Delta(f)$. An object of $\toprr$ is a space $Y$ and a functor $F \colon \pi(Y)^{\mathrm{op}} \to R\text{-mod}$ and a morphism from $(Y,F)$ to $(Z,G)$ is a continuous map $f \colon Z \to Y$ and a natural transformation $F \circ \Delta(f)^{\mathrm{op}} \Rightarrow G$. The definition is:
\[
H_*(Y,F) \coloneqq H_*(\Delta(Y),F \circ v_Y) \qquad\text{and}\qquad H^*(Y,F) \coloneqq H^*(\Delta(Y),F \circ v_Y).
\]
The \emph{homotopy-invariance} of (co)homology may then be expressed by the statement that the functors \eqref{eq:cohomology-of-spaces} are \emph{continuous} functors between topologically-enriched categories, where the topology on morphism-sets in $\topr$ and $\toprr$ is defined in the obvious way using the compact-open topology of mapping spaces, and the topology on morphism-sets in $\grrmod$ is discrete.
\end{defn}

\begin{rmk}\label{rmk:Davis-Kirk}
If we restrict to topological spaces that are locally path-connected and semi-locally simply-connected, then the categories $\topr$ and $\toprr$ are equivalent to the categories $\cL$ and $\cL^*$ mentioned in \S 5.4 of \cite{DavisKirk2001Lecturenotesin} (see Theorems 5.11 and 5.12 respectively). In particular $\topr$ may be thought of as follows: an object is a space (satisfying the above conditions) equipped with a bundle of $R$-modules over it; a morphism is a continuous map of spaces covered by a morphism of bundles that restricts to an $R$-linear isomorphism on each fibre.
\end{rmk}

\subsection{The spectral sequence.}\label{ss:TSSS}

In \cite{MoerdijkSvensson1993equivariantSerrespectral} the above is generalised to the equivariant setting: they define certain categories and a functor $v_Y\colon \Delta_G(Y) \to \pi_G(Y)$ for a $G$-space $Y$, and equivariant twisted cohomology $H^*_G(Y;F)$ for any coefficient system $F \colon \Delta_G(Y)^{\mathrm{op}}\to\mathsf{Ab}$. Again a coefficient system is \emph{local} if it factors up to natural isomorphism through $v_Y$. Cohomology with respect to local coefficient systems is $G$-homotopy invariant \cite[Theorem 2.3]{MoerdijkSvensson1993equivariantSerrespectral}. Their main theorem is the existence of a twisted equivariant Serre spectral sequence:

\begin{thm}[{\cite[Theorem 3.2]{MoerdijkSvensson1993equivariantSerrespectral}}]\label{Ttssscohomology}
For any $G$-fibration $f\colon Y\to X$ \textup{(}i.e.\ $Y^H\to X^H$ is a fibration for all $H\leq G$\textup{)} and any local coefficient system $F$ on $Y$, there is a local coefficient system $H^q_G(f;F)$ on $X$ for each $q\geq 0$ and a spectral sequence
\begin{equation}\label{Etssscohomology}
E_2^{p,q} = H^p_G\bigl(X;H^q_G(f;F)\bigr) \;\Rightarrow\; H^*_G(Y;F)
\end{equation}
with the usual cohomological grading.
\end{thm}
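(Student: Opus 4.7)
The plan is to realise this spectral sequence as the hypercohomology spectral sequence of a suitable double complex built from the fibration $f$. First I would define the target local coefficient system on $X$: to a singular simplex $\sigma\colon \Delta^n\to X$ (an object of $\Delta_G(X)$) assign the pullback $G$-fibration $\sigma^*f\colon \sigma^*Y\to \Delta^n$ equipped with the pulled-back coefficient system $\sigma^*M$, and set $H^q_G(f;M)(\sigma) \coloneqq H^q_G(\sigma^*Y;\sigma^*M)$. A simplicial operation $\alpha\colon \Delta^k\to \Delta^n$ induces a canonical map of pullback fibrations and hence a contravariant map between these cohomology groups, so $H^q_G(f;M)$ is at least a coefficient system on $X$. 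That it factors up to natural isomorphism through $v_X\colon \Delta_G(X)\to \pi_G(X)$ --- i.e.\ that it is \emph{local} --- follows from the $G$-homotopy invariance of equivariant cohomology with local coefficients (Theorem 2.3 of Moerdijk-Svensson), since pullback of a $G$-fibration along a $G$-homotopy produces $G$-homotopy equivalent total spaces.

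Second, I would build a double complex
\[
K^{p,q} = \prod_{\sigma\in N_p\Delta_G(X)} C^q_G\bigl(\sigma_0^*Y;\sigma_0^*M\bigr),
\]
where $\sigma_0$ is the leading simplex of the chain $\sigma$. The horizontal differential is the alternating sum of face operators of $N_\bullet\Delta_G(X)$ (each face inducing a canonical map between the pulled-back fibrations and hence between their $C^*_G$-complexes), and the vertical differential is the internal cochain differential of $C^*_G$. Running the column-first filtration spectral sequence, vertical cohomology yields $\prod_\sigma H^q_G(\sigma_0^*Y;\sigma_0^*M)$ on the $E_1$ page, and a subsequent horizontal cohomology identifies $E_2^{p,q}$ with $H^p_G(X;H^q_G(f;M))$, directly from the definition of $H^*_G(X;-)$ as the cohomology of $C^*(\Delta_G(X);-)$.

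It then remains to compute the abutment via the \emph{other} filtration, which requires exhibiting a natural augmentation $\varepsilon\colon C^*_G(Y;M)\to \mathrm{Tot}(K^{\bullet,\bullet})$ and showing that it is a quasi-isomorphism; equivalently, that if one takes horizontal cohomology first, the $E_1$ page is concentrated in column $0$ and equal there to $C^*_G(Y;M)$. This is the main obstacle. It is a Cartan-Eilenberg style local-to-global statement whose proof crucially uses the $G$-fibration hypothesis: for each subgroup $H\leq G$ the fixed-point fibration $Y^H\to X^H$ allows one to lift simplices of $X^H$ to simplices of $Y^H$ and to compare $C^*_G(Y;M)$ fibrewise, so that the simplicial object $\sigma\mapsto C^*_G(\sigma^*Y;\sigma^*M)$ on $\Delta_G(X)$ has the correct gluing properties to reassemble $C^*_G(Y;M)$ up to quasi-isomorphism.

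An essentially equivalent alternative would be to formulate the result as a Grothendieck spectral sequence associated with the composition of a direct-image functor $f_*$ between functor categories on $\Delta_G(Y)$ and $\Delta_G(X)$ and the global-sections-type functor computing $H^*_G(X;-)$, the fibration hypothesis being what guarantees that $f_*$ behaves as a genuine geometric pushforward and that its higher derived functors give the fibrewise cohomology system $H^q_G(f;M)$. Either way, the substantive content is the same local-to-global argument using the $G$-fibration structure.
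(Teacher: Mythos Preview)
The paper does not prove this statement: it is quoted verbatim as Theorem 3.2 of Moerdijk--Svensson \cite{MoerdijkSvensson1993equivariantSerrespectral} and used as a black box. There is therefore no proof in the paper to compare your proposal against.

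That said, your sketch is a reasonable outline of the standard double-complex construction of such a spectral sequence, and is in the spirit of the argument Moerdijk and Svensson give in their paper. You correctly identify the substantive step (that the augmentation $C^*_G(Y;M)\to \mathrm{Tot}(K^{\bullet,\bullet})$ is a quasi-isomorphism, equivalently that the row-first spectral sequence collapses onto column $0$) and correctly locate where the $G$-fibration hypothesis enters. If you were to turn this into an actual proof you would need to make that step precise: in the non-equivariant case it reduces to the contractibility of $\Delta^n$ and the fibration property, and the equivariant version requires the analogous statement for each fixed-point map $Y^H\to X^H$, which is exactly what the $G$-fibration hypothesis provides. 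Your Grothendieck spectral sequence reformulation is also correct in spirit, though verifying the hypotheses (enough injectives, acyclicity of $f_*$ on injectives) is not entirely trivial in this setting.
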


\begin{rmk}
We describe the local coefficient system $H^q(f;F)$ in the non-equivariant case. As a functor $\Delta(X)^{\mathrm{op}}\to\mathsf{Ab}$ it acts as follows. An object (i.e.\ singular simplex $\Delta^k\xrightarrow{\sigma}X$) is taken to the cohomology $H^q(\sigma^*(Y);F)$, where $\sigma^*(Y)$ is the pullback of $\sigma$ and $f$, and we denote any pullback of the coefficients $F$ also by $F$. A morphism $\Delta^l\xrightarrow{\alpha}\Delta^k\xrightarrow{\sigma}X$ induces a map of pullbacks $(\sigma\circ\alpha)^*(Y)\to \sigma^*(Y)$ and hence a map on cohomology. This coefficient system is a \emph{local} coefficient system since it factors up to natural isomorphism through $v_X$ by the following functor $\pi(X)^{\mathrm{op}}\to\mathsf{Ab}$. A point $x\in X$ is taken to $H^q(f^{-1}(x);F)$. Given a homotopy class $[I\xrightarrow{p}X]$ of paths from $x$ to $y$, there are induced maps of pullbacks $f^{-1}(x)\hookrightarrow p^*(Y)\hookleftarrow f^{-1}(y)$. These induce maps on cohomology, and since they are \emph{isomorphisms}\footnote{The inclusion $\{0\}\hookrightarrow [0,1]$ is an acyclic cofibration, so its pullback along the fibration $f$ is again an acyclic cofibration, in particular a weak equivalence.} the first one can be inverted to get a composite map $H^q(f^{-1}(x);F)\to H^q(f^{-1}(y);F)$. One can check that this map is independent of the choice of representing path $p$.
\end{rmk}

In \cite{MoerdijkSvensson1993equivariantSerrespectral} the authors point out that there is an analogous version of the spectral sequence \eqref{Etssscohomology} for homology. We will only need the non-equivariant (but twisted) version, which is:\footnote{This was also stated (referencing \cite{MoerdijkSvensson1993equivariantSerrespectral}) as Theorem 4.1 of \cite{Hanbury2009openclosedcobordism}.}

\begin{thm}\label{Ttssshomology}
For any fibration $f\colon Y\to X$ and any local coefficient system $F$ on $Y$, there is a local coefficient system $H_q(f;F)$ on $X$ for each $q\geq 0$ and a spectral sequence
\begin{equation}\label{Etssshomology}
E^2_{p,q} = H_p\bigl(X;H_q(f;F)\bigr) \;\Rightarrow\; H_*(Y;F)
\end{equation}
with the usual homological grading.
\end{thm}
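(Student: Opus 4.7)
The strategy is to adapt the Moerdijk--Svensson construction (Theorem~\ref{Ttssscohomology}) to the homological, non-equivariant setting, essentially by replacing products with direct sums and reversing the direction of the coefficient functors. First I would define the local coefficient system $H_q(f;M)\colon\Delta(X)\to\mathsf{Ab}$ on objects by sending a singular simplex $\sigma\colon\Delta^k\to X$ to $H_q(\sigma^*(Y);M)$ (with pullbacks of $M$ again denoted by $M$), and on morphisms sending a simplicial operator $\alpha\colon\Delta^l\to\Delta^k$ to the homomorphism induced by the canonical inclusion of pullbacks $(\sigma\alpha)^*(Y)\hookrightarrow\sigma^*(Y)$. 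Locality (factoring through $v_X$ up to natural isomorphism) is verified verbatim from the remark following Theorem~\ref{Ttssscohomology}: for any path $p\colon I\to X$, the endpoint fibre inclusions into $p^*(Y)$ are acyclic cofibrations and so induce isomorphisms on $H_q(-;M)$.

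Next I would assemble a first-quadrant bicomplex
\[
E^0_{p,q} \;=\; \bigoplus_{\sigma\in N_p\Delta(X)} C_q\bigl(\Delta(\sigma_0^*(Y));M\bigr),
\]
the sum running over $p$-chains $\sigma = (\Delta^{k_0}\to\cdots\to\Delta^{k_p}\to X)$ in the nerve of $\Delta(X)$, where $\sigma_0\colon\Delta^{k_0}\to X$ is the underlying simplex. The vertical differential is the local-coefficient chain differential in each column, and the horizontal differential is the alternating sum of the face maps of $N_\bullet\Delta(X)$, each combined with the canonical chain map $C_*(\Delta(\sigma_0^*(Y));M)\to C_*(\Delta((d_i\sigma)_0^*(Y));M)$ induced by the natural pullback inclusion. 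Filtering by columns and taking vertical homology yields $E^1_{p,q} = \bigoplus_{\sigma\in N_p\Delta(X)} H_q(\sigma_0^*(Y);M)$, on which the induced horizontal differential is by construction exactly the differential of $C_*(\Delta(X);H_q(f;M))$, so $E^2_{p,q} = H_p(X;H_q(f;M))$ as required.

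To identify the abutment with $H_*(Y;M)$, I would filter the bicomplex by rows and compare its total complex with $C_*(\Delta(Y);M)$ via the natural augmentation sending a chain in a pullback $\sigma_0^*(Y)$ to its pushforward along the projection $\sigma_0^*(Y)\to Y$. The main technical obstacle is showing that this augmentation is a quasi-isomorphism. The cleanest route is an acyclic-models / Eilenberg--Zilber style argument: after fixing a singular simplex $\tau$ of $Y$, the sub-bicomplex of factorisations of $\tau$ through the various pullbacks $\sigma_0^*(Y)$ admits a contracting chain homotopy constructed from the tautological lift $\Delta^q\to (f\tau)^*(Y)$ given by the universal property of the pullback, so each row of the augmented bicomplex is acyclic. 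Granting this, the row-filtration spectral sequence collapses on $E^1$ and identifies $H_*(\mathrm{Tot}\, E^0)$ with $H_*(\Delta(Y);M) = H_*(Y;M)$, completing the proof.
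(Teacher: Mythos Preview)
The paper does not actually prove Theorem~\ref{Ttssshomology}: it is stated as the homology analogue of the Moerdijk--Svensson spectral sequence and simply attributed to \cite{MoerdijkSvensson1993equivariantSerrespectral} (with a footnote noting that the same statement appears in \cite{Hanbury2009openclosedcobordism}). So there is no proof in the paper to compare your proposal against; you have gone further than the paper by sketching how the construction would actually be carried out.

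Your sketch is the right shape and is faithful to how Moerdijk--Svensson build the cohomological version: the bicomplex indexed by $N_\bullet\Delta(X)$ with fibrewise chains in each column, the column filtration to read off the $E^2$ page, and the row filtration plus an augmentation to identify the abutment. The definition of $H_q(f;M)$ and its locality are exactly as in the remark after Theorem~\ref{Ttssscohomology}, dualised correctly to the covariant setting. The one place where real work remains is precisely where you flag it: proving that the augmentation $\mathrm{Tot}\,E^0 \to C_*(\Delta(Y);M)$ is a quasi-isomorphism. Your acyclic-models outline is plausible, but as written it is only a hint --- for instance, you would need to check that the contracting homotopy you build from the tautological lift is compatible with the twisted coefficients $M$, and that the ``sub-bicomplex of factorisations'' is organised so that the homotopy is natural enough for the argument to assemble. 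None of this is wrong, but it is where the genuine content of the proof lives, and it would need to be written out carefully to count as a proof rather than a strategy.
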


The description of the local coefficient systems $H_q(f;F)$ is the same as above, replacing cohomology with homology. An important observation is that if the local coefficient system $F$ on $Y$ is pulled back from the base $X$, the local coefficient systems $H_q(f;F)$ are built out of the \emph{untwisted} homology of each fibre.

We now return to the viewpoint --- in the setting of based, path-connected spaces --- of local coefficient systems as an action of the fundamental group on an abelian group. In the special case where the local coefficient system on $Y$ is a pullback of one on $X$ the above can be rephrased as:

\begin{coro}\label{Ctssshomology}
For any fibration $f\colon Y\to X$ with fibre $F$ over the basepoint $x_0\in X$, and any $\pi_1(X)$-module $M$, there is a spectral sequence
\begin{equation}\label{Etssshomology2}
E^2_{p,q} = H_p\bigl(X;H_q(F;M)\bigr) \;\Rightarrow\; H_*(Y;M)
\end{equation}
with the usual homological grading. Here the action of $\pi_1(Y)$ on $M$ is pulled back from that of $\pi_1(X)$ via $f_*$ and the action of $\pi_1(F)$ on $M$ is trivial. The action of $\pi_1(X)$ on $H_q(F;M)$ is induced by its diagonal action on the chain complex $S_*(X)\otimes_{\bZ} M$.
\end{coro}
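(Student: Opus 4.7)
The plan is to deduce this corollary directly from Theorem~\ref{Ttssshomology}, applied with the local coefficient system $\widetilde{M}$ on $Y$ obtained by pulling back the $\pi_1(X)$-module $M$ along $f_*\colon \pi_1(Y) \to \pi_1(X)$. Theorem~\ref{Ttssshomology} immediately yields a spectral sequence
\[
E^2_{p,q} = H_p\bigl(X;H_q(f;\widetilde{M})\bigr) \;\Longrightarrow\; H_*(Y;\widetilde{M}),
\]
and by definition $H_*(Y;\widetilde{M}) = H_*(Y;M)$. So the only remaining task is to identify the local coefficient system $H_q(f;\widetilde{M})$ on $X$ with $H_q(F;M)$, equipped with the $\pi_1(X)$-action described in the statement.

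First I would observe that the restriction of $\widetilde{M}$ to the fibre $F = f^{-1}(x_0)$ is a constant local system with stalk $M$. This is because the composite $\pi_1(F) \to \pi_1(Y) \xrightarrow{f_*} \pi_1(X)$ is trivial, being induced by the constant composite $F \hookrightarrow Y \xrightarrow{f} \{x_0\} \hookrightarrow X$. Consequently, for any point $x \in X$ and any choice of path from $x_0$ to $x$, the fibre $f^{-1}(x)$ carries the constant local system $M$, and $H_q(f^{-1}(x); \widetilde{M}) \cong H_q(F;M)$.

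Next I would extract the $\pi_1(X)$-action from the functorial description of $H_q(f;\widetilde{M})$ given in the remark following Theorem~\ref{Ttssshomology}. A loop $\gamma$ based at $x_0$ determines a pullback $\gamma^*(Y) \to [0,1]$ with inclusions $F \hookrightarrow \gamma^*(Y) \hookleftarrow F$ at the two endpoints, each of which is a homotopy equivalence; the resulting self-map of $H_q(F; \widetilde{M}) = H_q(F;M)$ is the standard monodromy action, induced on chains by path-lifting $\tau_\gamma\colon F \to F$ tensored with the $\pi_1(X)$-action of $\gamma$ on the coefficients $M$. This is precisely the diagonal action on $S_*(F) \otimes_{\bZ} M$ described in the statement.

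The only step with real content is the last compatibility check: one must match the point-set definition of the local coefficient system $H_q(f;\widetilde{M})$ with the classical monodromy/diagonal description on $H_q(F;M)$. I do not expect a genuine obstacle here; it is a routine verification, provided one is careful with the homotopy equivalences coming from path-lifting over the contractible simplices that define the functor $\Delta(X)^{\mathrm{op}} \to \mathsf{Ab}$, and with the fact that $\widetilde{M}|_F$ is trivial as a local system so that the two possible sources of $\pi_1(X)$-action on $H_q(F;M)$ (on $F$ via monodromy, and on $M$ via $f_*$) really do combine diagonally rather than in some twisted way.
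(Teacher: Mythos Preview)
Your proposal is correct and follows exactly the approach the paper takes: the corollary is presented there simply as a rephrasing of Theorem~\ref{Ttssshomology} in the special case where the local coefficient system on $Y$ is pulled back from $X$, with the remark preceding it already noting that in this case the coefficient systems $H_q(f;M)$ are built from the untwisted homology of the fibres. You have filled in precisely the details the paper leaves implicit.
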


This is natural for maps of fibrations in the obvious way:

\begin{prop}\label{Ptsssnaturality}
Suppose we have a map of fibrations \textup{(}the vertical maps are fibrations, and the square commutes on the nose\textup{):}
\begin{center}
\begin{tikzpicture}
[x=0.8mm,y=1mm,>=stealth']
\node (tl) at (-10,10) {$Y^{\phantom{\prime}}$};
\node (tr) at (10,10) {$Y^\prime$};
\node (bl) at (-10,0) {$X$};
\node (br) at (10,0) {$X^\prime$};
\draw[->] (tl) to (tr);
\draw[->] (bl) to (br);
\draw[->] (tl) to (bl);
\draw[->] (tr) to (br);
\end{tikzpicture}
\end{center}
and a $\pi_1(X^\prime)$-module $M$. Denote the fibres over the basepoints by $F$ and $F^\prime$ respectively. Then there is a map of spectral sequences \eqref{Etssshomology2} where\textup{:}
\begin{itemizeb}
\item[$\circ$] The map $F\to F^\prime$ induces a map of untwisted homology $H_q(F;M)\to H_q(F^\prime;M)$, which is equivariant w.r.t.\ the homomorphism $\pi_1(X)\to \pi_1(X^\prime)$, so it induces a map of twisted homology $H_p(X;H_q(F;M))\to H_p(X^\prime;H_q(F^\prime;M))$. This is the map on the $E^2$ pages.
\item[$\circ$] The action of $\pi_1(Y)$ on $M$ is the pullback of the action of $\pi_1(Y^\prime)$ on $M$, so the map $Y\to Y^\prime$ induces a map of twisted homology $H_*(Y;M)\to H_*(Y^\prime;M)$. This is the map in the limit.
\end{itemizeb}
\end{prop}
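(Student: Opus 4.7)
The approach is to observe that the Moerdijk--Svensson construction producing the spectral sequence of Theorem \ref{Ttssshomology} is functorial in morphisms of the category whose objects are triples $(f\colon Y\to X, M)$ with $f$ a fibration and $M$ a local coefficient system on $Y$, and whose morphisms are commutative squares of fibrations equipped with a compatible morphism of coefficient systems. Granting this, the hypothesis supplies exactly such a morphism -- the identity on $M$ is a legitimate map of coefficient systems because the $\pi_1(Y)$-action on $M$ is by assumption the pull-back of the $\pi_1(Y')$-action -- so one obtains a map of spectral sequences formally, and it remains only to identify the induced maps on the $E^2$ page and on the abutment.

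For the functoriality statement, recall that the spectral sequence of Theorem \ref{Ttssshomology} is associated to a natural filtration on $C_*(\Delta(Y);M)$, or equivalently to a natural double complex whose total complex is quasi-isomorphic to $C_*(\Delta(Y);M)$. The vertical maps in the hypothesis induce functors $\Delta(Y)\to\Delta(Y')$ over $\Delta(X)\to\Delta(X')$ by post-composition, and the compatible coefficient identification then produces a map of chain complexes respecting the filtration; passing to the associated spectral sequences gives the desired map.

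On the abutment, the chain-level map is nothing more than post-composition of singular simplices with $Y\to Y'$ together with the identity on $M$, so the induced map on homology is exactly $H_*(Y;M)\to H_*(Y';M)$. On the $E^1$ page one obtains, at each singular simplex $\sigma\colon\Delta^p\to X$, the map on fibre homology $H_q(\sigma^*(Y);M)\to H_q((g\sigma)^*(Y');M)$, where $g\colon X\to X'$ is the induced map of bases; taking horizontal homology then yields the map on the $E^2$ page in terms of a morphism between the local coefficient systems $H_q(f;M)$ and $H_q(f';M)$ on $X$ and $X'$.

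The one mildly technical step -- and hence the main obstacle -- is to check that this map of local coefficient systems is exactly the one described in the statement, namely the map of $\pi_1(X)$-modules $H_q(F;M)\to H_q(F';M)$ induced by $F\to F'$ and viewed as equivariant along $f_*\colon\pi_1(X)\to\pi_1(X')$. This reduces to verifying that the parallel-transport isomorphisms used in defining $H_q(f;M)$ and $H_q(f';M)$ as local coefficient systems commute with the fibre map $F\to F'$, which is a routine covering-homotopy argument exploiting the strict commutativity of the hypothesised square together with the fact that pull-backs along acyclic cofibrations are again acyclic cofibrations. Once this identification is in place, both claims about the $E^2$ and $E^\infty$ pages follow immediately from the construction above.
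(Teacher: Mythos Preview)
The paper does not actually prove this proposition: it is stated without proof, immediately before \S\ref{sProof}, as a standard naturality fact accompanying the Moerdijk--Svensson spectral sequence. Your proposal is correct and supplies precisely the kind of argument one would give if asked to justify it --- functoriality of the filtered chain complex underlying the construction, together with the routine identification of the induced maps on $E^2$ and on the abutment. The point you flag as the ``mildly technical step'' (compatibility of parallel transport with the fibre map) is indeed the only thing requiring any care, and your covering-homotopy justification is the right one.
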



\section{Proof of twisted homological stability}\label{sProof}

We now use the twisted Serre spectral sequence of the previous section to prove Theorem \ref{tMain}. We first record another fact we will use:

\begin{lem}[Shapiro for covering spaces]\label{lShapiro}
Suppose we have a based, path-connected space $X$ which is locally path-connected and semi-locally simply-connected, a subgroup $H$ of $\pi_1(X)$ and an $H$-module $A$. Let $\hat{X}$ be the \textup{(}based\textup{)} covering space corresponding to $H$. Then
\begin{equation}\label{eShapiro}
H_*(\hat{X}; A) \cong H_*(X; \bZ \pi_1(X)\otimes_{\bZ H} A).
\end{equation}
Moreover, given a map of the above data, namely a \textup{(}based\textup{)} map $f\colon X\to X^\prime$ such that $f_*(H)\subseteq H^\prime$ \textup{(}so that there is a unique based lift $\hat{f} \colon \hat{X} \to \hat{X}^\prime$\textup{)} and a map $\phi \colon A\to A^\prime$ which is equivariant w.r.t.\ $f_*$, the identification \eqref{eShapiro} is natural in the sense that
\begin{equation}\label{eShapiroNaturality}
\centering
\begin{split}
\begin{tikzpicture}
[x=1mm,y=1mm,>=stealth']
\node (tl) at (0,0) {$H_*(X; \bZ \pi_1(X) \otimes_{\bZ H} A)$};
\node (tr) at (50,0) {$H_*(X^\prime; \bZ \pi_1(X^\prime) \otimes_{\bZ H^\prime} A^\prime)$};
\node (bl) at (0,10) {$H_*(\hat{X}; A)$};
\node (br) at (50,10) {$H_*(\hat{X}^\prime; A^\prime)$};
\draw[->] (tl) to (tr);
\draw[->] (bl) to (br);
\node at (0,5) {\rotatebox[origin=c]{270}{$\cong$}};
\node at (50,5) {\rotatebox[origin=c]{270}{$\cong$}};
\end{tikzpicture}
\end{split}
\end{equation}
commutes.
\end{lem}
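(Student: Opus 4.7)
The plan is to pass to the universal cover and deduce the isomorphism from the classical change-of-rings identity, which is the chain-level content of Shapiro's lemma. Let $p\colon \tilde{X} \to X$ be a universal cover; since $X$ is nice enough to admit one and $\hat{X}$ is the covering space corresponding to $H \leq \pi_1(X)$, the induced map $\tilde{X} \to \hat{X}$ is also a universal cover, with deck group $H$.

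Recall the standard description of twisted homology via the universal cover: for any $\pi_1(Y)$-module $M$,
\[
H_*(Y; M) \;\cong\; H_*\bigl( S_*(\tilde{Y}) \otimes_{\bZ \pi_1(Y)} M \bigr),
\]
where $S_*(\tilde{Y})$ is regarded as a complex of free right $\bZ \pi_1(Y)$-modules via the deck action. Applying this to both sides of \eqref{eShapiro}, the identity I want reduces to the chain-level isomorphism
\[
S_*(\tilde{X}) \otimes_{\bZ H} A \;\cong\; S_*(\tilde{X}) \otimes_{\bZ \pi_1(X)} \bigl( \bZ \pi_1(X) \otimes_{\bZ H} A \bigr),
\]
which is nothing more than transitivity of tensor product for the inclusion of group rings $\bZ H \hookrightarrow \bZ \pi_1(X)$, sending $c \otimes a$ to $c \otimes (1 \otimes a)$. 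Taking homology yields \eqref{eShapiro}.

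For naturality, choose basepoints of $\tilde{X}$ and $\tilde{X}^\prime$ over the given basepoints of $X$ and $X^\prime$. The hypothesis $f_*(H) \subseteq H^\prime$ implies that $f$ lifts uniquely to a basepoint-preserving map $\tilde{f}\colon \tilde{X} \to \tilde{X}^\prime$, and that this map descends to the unique basepoint-preserving lift $\hat{f}\colon \hat{X} \to \hat{X}^\prime$. The induced chain map $\tilde{f}_*$ is equivariant with respect to $f_*\colon \pi_1(X) \to \pi_1(X^\prime)$, and combined with the equivariant map $\phi\colon A \to A^\prime$ it furnishes the vertical maps on both sides of the chain-level identity above. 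This identity is manifestly functorial in the data $(\tilde{X}, \pi_1(X), H, A)$ with compatible equivariance, so the square \eqref{eShapiroNaturality} commutes after passing to homology.

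The only care required is bookkeeping with basepoints and with the universal cover lift of $f$; the algebraic content is entirely formal. I do not expect any serious obstacle — this is standard and is the reason the lemma can be stated as ``Shapiro for covering spaces''.
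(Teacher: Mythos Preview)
Your proof is correct and matches the paper's approach essentially line for line: both pass to the universal cover, invoke the chain-level transitivity isomorphism $S_*(\widetilde{X}) \otimes_{\bZ H} A \cong S_*(\widetilde{X}) \otimes_{\bZ \pi_1(X)} (\bZ\pi_1(X) \otimes_{\bZ H} A)$ sending $\sigma\otimes a$ to $\sigma\otimes 1\otimes a$, and check naturality by lifting $f$ to $\widetilde{f}$ and verifying the chain-level square commutes.
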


\begin{proof}
Denote the singular chain complex functor by $S_*(\phantom{-})$ and the universal cover of $X$ by $\widetilde{X}$. Then we have an isomorphism of chain complexes
\[
S_*(\widetilde{X}) \otimes_{\bZ H} A \longrightarrow S_*(\widetilde{X}) \otimes_{\bZ \pi_1(X)} \bZ \pi_1(X) \otimes_{\bZ H} A
\]
given by $\sigma\otimes a \mapsto \sigma\otimes [c_x] \otimes a$, where $c_x$ is the constant loop at the basepoint $x$ of $X$. Taking homology gives the identification \eqref{eShapiro}. Let $\widetilde{f}$ denote the unique (based) lift of $f$ to $\widetilde{X}\to \widetilde{X}^\prime$. The diagram \eqref{eShapiroNaturality} is induced by
\begin{center}
\begin{tikzpicture}
[x=1.2mm,y=1.2mm,>=stealth']
\node (bl) at (0,0) {$S_*(\widetilde{X}) \otimes_{\bZ \pi_1(X)} \bZ \pi_1(X) \otimes_{\bZ H} A$};
\node (br) at (50,0) {$S_*(\widetilde{X}^\prime) \otimes_{\bZ \pi_1(X^\prime)} \bZ \pi_1(X^\prime) \otimes_{\bZ H^\prime} A^\prime$};
\node (tl) at (0,10) {$S_*(\widetilde{X}) \otimes_{\bZ H} A$};
\node (tr) at (50,10) {$S_*(\widetilde{X}^\prime) \otimes_{\bZ H^\prime} A^\prime$};
\draw[->] (tl) to (tr);
\draw[->] (bl) to (br);
\draw[->] (tl) to node[left,font=\small]{$\cong$} (bl);
\draw[->] (tr) to node[right,font=\small]{$\cong$} (br);
\end{tikzpicture}
\end{center}
and one can check that both routes around the square send $\sigma \otimes a$ to $\widetilde{f}_\sharp (\sigma) \otimes [c_{x^\prime}] \otimes \phi(a)$.
\end{proof}

This will be applied to the following covering spaces of configuration spaces:

\begin{defn}
The configuration space $C_{(k,n-k)}(M,X)$ of $k$ red and $n-k$ green points in $M$ with labels in $X$ is defined to be
\[
(\mathrm{Emb}(n,M) \times X^n)/(\Sigma_{n-k} \times \Sigma_k)
\]
(\cf Remark \ref{rColoured}), and we give it the basepoint $\{(a_1,x_0),\ldots,(a_n,x_0)\}$ with the points $a_1,\ldots,a_{n-k}$ coloured green and the points $a_{n-k+1},\ldots,a_n$ coloured red. There is also a stabilisation map $s_n^k\colon C_{(k,n-k)}(M,X) \to C_{(k,n-k+1)}(M,X)$, which is defined exactly as in \S\ref{ssTCS-stabmap}, and adds a new green point to the configuration.
\end{defn}

\begin{defn}
Let $f\colon C_{(k,n-k)}(M,X)\to C_k(M,X)$ be the map which forgets the green points. We will also need the following two maps for technical reasons: Define $p\colon C_k(M,X)\to C_k(M,X)$ to be the self-homotopy-equivalence induced by the self-embedding $e|_{M}\colon M\emb M$ (see \S\ref{ssTCS-setup}). Choose a self-diffeomorphism of $M$ which is isotopic to the identity and which takes $a_i$ to $a_{i+n-k+1}$ for $i=1,\ldots,k$. Denote by $\phi$ the self-homeomorphism $C_k(M,X)\to C_k(M,X)$ induced by this.
\end{defn}

The forgetful maps $f$ are locally trivial fibre bundles, so we have a map of fibrations:

\begin{equation}\label{eMapOfFibrations}
\centering
\begin{split}
\begin{tikzpicture}
[x=2mm,y=1.5mm,>=stealth']
\node (tl) at (0,10) {$C_{(k,n-k)}(M,X)$};
\node (tr) at (20,10) {$C_{(k,n-k+1)}(M,X)$};
\node (bl) at (0,0) {$C_k(M,X)$};
\node (br) at (20,0) {$C_k(M,X)$};
\draw[->] (tl) to node[above,font=\small]{$s_n^k$} (tr);
\draw[->] (bl) to node[below,font=\small]{$\phi^{-1}\circ p$} (br);
\draw[->] (tl) to node[left,font=\small]{$f$} (bl);
\draw[->] (tr) to node[right,font=\small]{$\phi^{-1}\circ f$} (br);
\end{tikzpicture}
\end{split}
\end{equation}

The $p$ is there to ensure that it commutes on the nose, and the $\phi^{-1}$ is there to deal with basepoints: on the bottom-left we have to give $C_k(M,X)$ the basepoint $\{(a_{n-k+1},x_0),\ldots,(a_n,x_0)\}$, but on the bottom-right we can give it its usual basepoint of $\{(a_1,x_0),\ldots,(a_k,x_0)\}$.

The map $s_n^k$ restricted to the fibres over the basepoints is a map
\[
C_{n-k}(M\smallsetminus\{a_{n-k+1},\ldots,a_n\},X) \to C_{n-k+1}(M\smallsetminus\{a_{n-k+2},\ldots,a_{n+1}\},X),
\]
but this can be identified, up to homeomorphism, with the stabilisation map $s_{n-k}\colon C_{n-k}(M_k,X) \to C_{n-k+1}(M_k,X)$, where $M_k$ is $M$ with a subset of $M\smallsetminus U$ of size $k$ removed (see \S\ref{ssTCS-setup} for notation).

Finally, before beginning the proof proper, we mention how a certain local coefficient system pulls back along the maps in \eqref{eMapOfFibrations}. The covering space $C_{(k,n-k)}(M,X)\to C_n(M,X)$ corresponds to the subgroup $G_n^k \leq G_n = \pi_1 C_n(M,X)$. Recall from Proposition \ref{pDecomp} that $T_n^k$ is a $\bZ G_n^k$-module (it is a sub-$\bZ G_n^k$-module of $T_n$), so it is a local coefficient system for $C_{(k,n-k)}(M,X)$.

\begin{lem}\label{lPullbackLCS}
The local coefficient system $T_k^k$ on the right-hand base space pulls back to the local coefficient systems $T_n^k$ and $T_{n+1}^k$ on the total spaces of \eqref{eMapOfFibrations}.
\end{lem}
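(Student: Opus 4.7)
The plan is to translate the assertion into a comparison of $G_n^k$-modules, use Lemma \ref{lBijection} to produce a candidate isomorphism, and then verify equivariance by a direct identification of loops in $C_k(M,X)$. The statements for $T_n^k$ and $T_{n+1}^k$ are identical in structure, so I would argue only the $T_n^k$ case.

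First, I would unpack what is being claimed. Since $C_{(k,n-k)}(M,X) \to C_n(M,X)$ is the cover corresponding to $G_n^k \leq G_n$, its fundamental group is $G_n^k$, and by Proposition \ref{pDecomp} the subgroup $T_n^k \subseteq T_n$ is preserved by this action and hence defines a local coefficient system on $C_{(k,n-k)}(M,X)$. The pullback $(\phi^{-1}\circ f)^* T_k^k$ is by definition the abelian group $T_k^k$ equipped with the $G_n^k$-action obtained by composing $(\phi^{-1}\circ f)_*\colon G_n^k \to G_k$ with the intrinsic $G_k$-action on $T_k^k$. What is to be produced is therefore a $G_n^k$-equivariant isomorphism of abelian groups between these two local coefficient systems.

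Next, for the candidate isomorphism I would take the restriction of $T(\iota_k^n)\colon T_k \to T_n$ to $T_k^k$. Lemma \ref{lBijection} shows this is a bijection onto $T_n^k$ with inverse given by the restriction of $T(\pi_n^k)$. Using functoriality of $T$, checking equivariance reduces to the identity
\begin{equation*}
\pi_n^k \circ \gamma \circ \iota_k^n \;=\; (\phi^{-1}\circ f)_*(\gamma) \quad \text{in } G_k = \Aut_{\cB(M,X)}((x_0)^k)
\end{equation*}
for every $\gamma \in G_n^k$.

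The hard part is establishing this identity at the level of morphisms in $\cB(M,X)$. Both sides are homotopy classes of loops in $C_k(M,X)$ based at $\{(a_1,x_0),\ldots,(a_k,x_0)\}$. Unpacking the composition rule of $\cB(M,X)$ illustrated in \eqref{eComposition}, the left-hand side concatenates three paths: $\iota_k^n$ (which moves the configuration to $\{(a_{n-k+1},x_0),\ldots,(a_n,x_0)\}$ using the iterate of the isotopy $I$ of $\mbar$ from $e$ to the identity); then the restriction of $\gamma$ to the red strands, which is a loop at $\{(a_{n-k+1},x_0),\ldots,(a_n,x_0)\}$ in $C_k(M,X)$ precisely because $\gamma \in G_n^k$ preserves the red/green partition of the basepoint; and finally the reverse path $\pi_n^k$. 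The right-hand side first applies $f$ to $\gamma$ to forget the green strands, giving the same red loop, and then conjugates by the path in $C_k(M,X)$ induced by an isotopy from $\mathrm{id}_M$ to $\phi$. I would close the argument by choosing $\phi$ together with its isotopy to the identity so as to be compatible with $\iota_k^n$ — for instance, by taking $\phi$ to be a compactly-supported self-diffeomorphism of $M$ extending $e^{n-k}$ on a neighbourhood of $\{a_1,\ldots,a_k\}$, equipped with the isotopy inherited from iterating $I$. With this choice, the two composite loops are literally the same path in $C_k(M,X)$, which yields the required equivariance and hence the lemma.
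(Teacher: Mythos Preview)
Your approach is essentially the paper's: use Lemma~\ref{lBijection} to produce the bijection between $T_n^k$ and $T_k^k$, then verify equivariance by exploiting that the maps involved come from isotopies to the identity. The paper does exactly this, though much more tersely --- it simply states that the restriction of $\pi_k^n$ is the required isomorphism and that equivariance holds ``because both $e|_M$ and the diffeomorphism which induces $\phi$ are isotopic to the identity''. Your unpacking of the two loops in $C_k(M,X)$ is what lies behind that sentence.

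Two small slips to fix. First, for the left-hand total space the map down to the right-hand base is $\phi^{-1}\circ p\circ f$, not $\phi^{-1}\circ f$ (the latter is the right-hand vertical in \eqref{eMapOfFibrations}); you have dropped the $p$. Correspondingly, your suggested $\phi$ extending $e^{n-k}$ is off by one: the paper's $\phi$ must send $a_i$ to $a_{i+n-k+1}$, so you want $e^{n-k+1}$. (Your two errors happen to cancel, which is why your sketch still makes sense internally.) Second, the paper writes $\pi_k^n$ for the left-inverse of $\iota_k^n$, not $\pi_n^k$. Finally, ``literally the same path'' is a slight overstatement: what you actually get is that transporting the red loop by the homeomorphism $\phi^{-1}\circ p$ and conjugating it by the path $\iota_k^n$ give the same homotopy class, via the standard argument that an ambient isotopy $h_t$ from the identity relates $h_1\circ\gamma$ to the conjugate of $\gamma$ by $t\mapsto h_t(y_0)$.
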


\begin{proof}
By Lemma \ref{lBijection}, the left-inverse $\pi_k^n$ of $\iota_k^n\colon T_k\to T_n$ restricts to a bijection $T_n^k\to T_k^k$. So this is an isomorphism of abelian groups, and it is enough to check that it is equivariant w.r.t.\ the map on $\pi_1$ induced by the composite $\phi^{-1}\circ p\circ f$ in \eqref{eMapOfFibrations}. This is true because both $e|_{M}\colon M\emb M$ (which induces $p$) and the diffeomorphism which induces $\phi$ are isotopic to the identity. Exactly the same argument works for the right-hand side.
\end{proof}

\begin{proof}[Proof of Theorem \ref{tMain} \textup{(}except the split-injectivity claim\textup{)}]
We need to show that the map
\begin{equation}\label{Emap1}
H_*(C_n(M,X); T_n) \longrightarrow H_*(C_{n+1}(M,X); T_{n+1})
\end{equation}
induced by $s_n$ and $\iota_n$ is an isomorphism in the range $*\leq\frac{n-d}{2}$. By the decomposition \eqref{eDecomp} of Proposition \ref{pDecomp}, and the fact that $T$ has degree $d$, this is the same as the map
\begin{equation}\label{Emap2}
\bigoplus_{k=0}^d H_*(C_n(M,X); \bZ G_n \otimes_{\bZ G_n^k} T_n^k) \longrightarrow \bigoplus_{k=0}^d H_*(C_{n+1}(M,X); \bZ G_{n+1} \otimes_{\bZ G_{n+1}^k} T_{n+1}^k)
\end{equation}
induced by $s_n$, $\iota_n$ and $(s_n)_*$. By Shapiro's Lemma for covering spaces (Lemma \ref{lShapiro}) this is isomorphic to the map
\begin{equation}\label{Emap3}
\bigoplus_{k=0}^d H_*(C_{(k,n-k)}(M,X); T_n^k) \longrightarrow \bigoplus_{k=0}^d H_*(C_{(k,n-k+1)}(M,X); T_{n+1}^k)
\end{equation}
induced by $s_n^k$ and $\iota_n$. The map of fibrations \eqref{eMapOfFibrations} gives the following map of twisted Serre spectral sequences (Corollary \ref{Ctssshomology}, Proposition \ref{Ptsssnaturality} and Lemma \ref{lPullbackLCS}):
\begin{equation}\label{Emtsss}
\centering
\begin{split}
\begin{tikzpicture}
[x=1mm,y=1.2mm,>=stealth']
\node (t1) at (0,10) {$E^2_{p,q} = H_p(C_k(M,X); H_q(C_{n-k}(M_k,X); T_k^k))$};
\node (t2) at (70,10) {$H_*(C_{(k,n-k)}(M,X); T_n^k)$};
\node (b1) at (0,0) {$E^2_{p,q} = H_p(C_k(M,X); H_q(C_{n-k+1}(M_k,X); T_k^k))$};
\node (b2) at (70,0) {$H_*(C_{(k,n-k+1)}(M,X); T_{n+1}^k).$};
\draw[->] (t1) to (b1);
\draw[->] (t2) to (b2);
\node at ($ (t1.east)!0.5!(t2.west) $) {$\Rightarrow$};
\node at ($ (b1.east)!0.5!(b2.west) $) {$\Rightarrow$};
\end{tikzpicture}
\end{split}
\end{equation}
The map in the limit is the $k$th summand of \eqref{Emap3}, and the map on $E^2$ pages is induced by the stabilisation map $s_{n-k}$ on the fibres and the homotopy-equivalence $\phi^{-1}\circ p$ on the base. Note that $T_k^k$ is a \emph{constant} coefficient system once it has been pulled back to the fibres $C_{n-k}(M_k,X)$ and $C_{n-k+1}(M_k,X)$, since it was originally pulled back from the base.

Hence, by \emph{untwisted} homological stability for configuration spaces (Theorem \ref{tUntwistedStability}) and the universal coefficient theorem, the map on $E^2$ pages is an isomorphism for $q\leq\frac{n-k}{2}$ (and all $p\geq 0$). By the Zeeman comparison theorem\footnote{The required implication is contained in the proof of Theorem 1 of \cite{Zeeman1957proofofcomparison}, although stronger hypotheses are stated there. An explicit statement of the comparison theorem which applies to our case is Theorem 1.2 of \cite{Ivanov1993homologystabilityTeichmuller}. It is also written in Remarque 2.10 of \cite{CollinetDjamentGriffin2013Stabilitehomologiquepour}.} it is therefore an isomorphism in the limit for $*\leq\frac{n-k}{2}$. So in the range $*\leq\frac{n-d}{2}$ each summand in \eqref{Emap3} is an isomorphism, so \eqref{Emap1} is an isomorphism.
\end{proof}

\begin{rmk}\label{rRationalRange}
When $M$ is at least $3$-dimensional, the stabilisation map $C_n(M,X)\to C_{n+1}(M,X)$ is an isomorphism on homology with coefficients in $\bZ[\frac12]$ in the larger range $*\leq n$, by \cite{KupersMiller2015Improvedhomologicalstability}. Since $\bZ[\frac12]$ is a PID, this implies, via the universal coefficient theorem, the same for homology with coefficients in any $\bZ[\frac12]$-module. Suppose that $T\colon \cB(M,X) \to \bZ[\frac12]\text{-}\mathsf{mod} \leq \mathsf{Ab}$ is a twisted coefficient system of $\bZ[\frac12]$-modules (meaning that it takes values in the full subcategory $\bZ[\frac12]\text{-}\mathsf{mod}$ of $\mathsf{Ab}$) of degree $d$. Then the constant coefficients $T_k^k$ appearing in \eqref{Emtsss} above are all $\bZ[\frac12]$-modules, and the same proof tells us that the map
\begin{equation}\label{eImprovedRange}
H_*(C_n(M,X);T_n) \to H_*(C_{n+1}(M,X);T_{n+1})
\end{equation}
is an isomorphism in the larger range $*\leq n-d$ (rather than just $*\leq\frac{n-d}{2}$). When $M$ is a surface, there is a similar improvement to the range for rational coefficients. In this case the stabilisation map is an isomorphism on homology with rational coefficients in the range $*\leq n$ in the non-orientable case and in the range $*<n$ in the orientable case, by \cite[Corollary 3]{Church2012Homologicalstabilityconfiguration} and \cite[Theorem 1.3]{Knudsen2014Bettinumbersand}.\footnote{The maps used in these two references to induce isomorphisms between configuration spaces are not the stabilisation maps. However, we may reduce to the case where the manifolds are of finite type, so that the rational homology of the configuration spaces is a finite-dimensional vector space in each degree. Moreover, the stabilisation maps are always split-injective in all degrees (see Theorem \ref{tUntwistedStability}). So the fact that $H_*(C_n(M,X);\bQ)$ and $H_*(C_{n+1}(M,X);\bQ)$ are abstractly isomorphic in a range implies that the stabilisation map is an isomorphism in this range.} Thus if $T\colon \cB(M,X) \to \mathsf{Vect}_{\bQ} \leq \mathsf{Ab}$ is a \emph{rational} twisted coefficient system of degree $d$, the map \eqref{eImprovedRange} is an isomorphism in either the range $*\leq n-d$ (for non-orientable surfaces) or the range $*<n-d$ (for orientable surfaces).
\end{rmk}


\section{Split-injectivity}\label{sInjectivity}

To prove the split-injectivity part of Theorem \ref{tMain} we will use the following lemma which was used implicitly by Nakaoka in \cite{Nakaoka1960Decompositiontheoremhomology} and later written down explicitly by Dold in \cite{Dold1962DecompositiontheoremsSn}:

\begin{lem}[{\cite[Lemma 2]{Dold1962DecompositiontheoremsSn}}]\label{lDold}
Given a sequence $0\to A_1 \xrightarrow{\phi_1} A_2 \xrightarrow{\phi_2} \cdots$ of abelian groups and homomorphisms, the following is sufficient to imply that each of the maps $\phi_i$ is split-injective\textup{:} There exist maps $\tau_{k,n}\colon A_n \to A_k$ for $1\leq k\leq n$ with $\tau_{n,n} = \mathrm{id}$ such that
\begin{equation}\label{eDold}
\image(\tau_{k,n} - \tau_{k,n+1}\circ \phi_n) \leq \image(\phi_{k-1}).
\end{equation}
\end{lem}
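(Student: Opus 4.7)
The plan is to prove by strong induction on $n$ that each $\phi_n$ is split-injective, exhibiting an explicit retraction $\rho_n \colon A_{n+1}\to A_n$ built from the $\tau$'s. The base case $n=1$ is immediate: the hypothesis with $k=n=1$, using the convention $\image(\phi_0)=0$, forces $\tau_{1,2}\circ\phi_1=\mathrm{id}_{A_1}$, so $\rho_1\coloneqq\tau_{1,2}$ is a retraction. For the inductive step, assume $\phi_1,\ldots,\phi_{n-1}$ are split-injective; in particular each $\phi_{k-1}$ with $k\leq n$ is injective, so the hypothesis produces, for each such $k$, a \emph{unique} homomorphism $\beta_k\colon A_n\to A_{k-1}$ with $\phi_{k-1}\circ\beta_k=\tau_{k,n}-\tau_{k,n+1}\circ\phi_n$. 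In particular $\phi_{n-1}\circ\beta_n=\mathrm{id}_{A_n}-\tau_{n,n+1}\circ\phi_n$.

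The naive candidate $\rho_n=\tau_{n,n+1}+\phi_{n-1}\circ\widetilde{\beta_n}$ would require extending $\beta_n$ along $\phi_n$ to a map $A_{n+1}\to A_{n-1}$, which is essentially the problem we are trying to solve; instead I would use the remaining $\tau_{k,n+1}$ as candidate extensions and correct them by iterated compositions. Explicitly, define
\[
\rho_n \;\coloneqq\; \sum_{\substack{r\geq 2 \\ 1\leq k_1<k_2<\cdots<k_{r-1}\leq n}} (-1)^r\,(\phi_{n-1}\circ\cdots\circ\phi_{k_1})\circ\tau_{k_1,k_2}\circ\tau_{k_2,k_3}\circ\cdots\circ\tau_{k_{r-1},n+1},
\]
the sum running over nonempty chains in $\{1,\ldots,n\}$ extended by $n+1$ (with the convention that an empty product of $\phi$'s equals $\mathrm{id}$). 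To check $\rho_n\circ\phi_n=\mathrm{id}_{A_n}$, substitute the relation $\tau_{k_{r-1},n+1}\circ\phi_n=\tau_{k_{r-1},n}-\phi_{k_{r-1}-1}\circ\beta_{k_{r-1}}$ at the tail of each chain. The ``leading'' term from a chain $(k_1,\ldots,k_{r-2},n,n+1)$ of length $r$ cancels against the ``leading'' term from the chain $(k_1,\ldots,k_{r-2},n+1)$ of length $r-1$ obtained by deleting the penultimate $n$, since the two terms carry opposite signs. The remaining ``correction'' terms (those involving the $\beta_k$'s) must in turn telescope away by the same mechanism applied at smaller indices; I have verified this by direct computation for $n\leq 3$, and checked that only the identity survives.

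The main obstacle will be organising the combinatorial bookkeeping of this cancellation in full generality. A cleaner reformulation would double-index the resulting terms by (chain, position in the chain at which the hypothesis was invoked) and show that the resulting double sum manifestly telescopes; this should yield to a secondary induction on chain length, but one must be careful to keep track of the injectivity assumptions (provided by the primary induction hypothesis) that underpin uniqueness of each $\beta_k$.
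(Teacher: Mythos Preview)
The paper does not prove this lemma at all: it is simply quoted as \cite[Lemma~2]{Dold1962DecompositiontheoremsSn}, so there is no ``paper's own proof'' to compare with. What remains is to assess whether your argument is complete.

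Your explicit formula for $\rho_n$ is correct, and the pairing of ``leading'' terms that you describe (matching each chain $c\subseteq\{1,\dots,n\}$ not containing $n$ with $c\cup\{n\}$, leaving only the chain $\{n\}$ to contribute $\mathrm{id}$) is fine. The genuine gap is exactly where you say it is: the cancellation of the $\beta$-correction terms is asserted but not proved. ``I have verified this by direct computation for $n\leq 3$'' is not a proof, and the promised ``secondary induction on chain length'' is never carried out. As it stands this is an incomplete proof sketch.

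To close the gap, it helps to recognise the recursive structure hidden in your formula. Write $D_{k,m}\coloneqq\tau_{k,m}-\tau_{k,m+1}\circ\phi_m$, so that $\image(D_{k,m})\leq\image(\phi_{k-1})$ by hypothesis. After one substitution and the leading-term cancellation you described, the remaining terms of $\rho_n\circ\phi_n-\mathrm{id}$ regroup as
\[
\sum_{k=2}^{n}\;\Phi_{k,n}\circ\bigl(\phi_{k-1}\circ\rho_{k-1}-\mathrm{id}_{A_k}\bigr)\circ D_{k,n},
\]
where $\Phi_{k,n}=\phi_{n-1}\circ\cdots\circ\phi_k$ and $\rho_{k-1}\colon A_k\to A_{k-1}$ is the retraction already produced by the (strong) induction hypothesis --- the point is that the subsum over chains with maximum $\leq k-1$ is exactly $\Phi_{k,n}\circ\phi_{k-1}\circ\rho_{k-1}$ after reindexing. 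Since $D_{k,n}$ lands in $\image(\phi_{k-1})$ and $\rho_{k-1}\circ\phi_{k-1}=\mathrm{id}$, each summand vanishes. This removes the need for your undefined ``secondary induction'' and makes the r\^ole of the primary induction hypothesis transparent: you use it not to produce the maps $\beta_k$, but to know that $\phi_{k-1}\circ\rho_{k-1}$ is the identity on $\image(\phi_{k-1})$.
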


Let $U_n(M,X)$ be the universal cover of $C_n(M,X)$. One can think of its elements as $n$-strand ``open-ended braids'' in $M\times [0,1]$ ($n$ pairwise disjoint paths in $M\times [0,1]$ which are the identity in the second coordinate and start at $\{(a_1,0),\ldots,(a_n,0)\}$, up to endpoint-preserving homotopy) with each strand labelled by the based path space $PX$. Let $\widetilde{s}_n \colon U_n(M,X)\to U_{n+1}(M,X)$ be the lift of the stabilisation map which applies $e|_{M}\times \mathrm{id}_{[0,1]}$ to the braid and adds a vertical strand at $a_1$ labelled by the constant path $c_{x_0}$.

As before, denote $\pi_1 C_n(M,X)$ by $G_n$, and denote the singular chain complex of a space by $S_*(\phantom{-})$. Let $T\colon \cB(M,X)\to\ab$ be any twisted coefficient system (we do not assume finite-degree in this section). Then the map
\begin{equation}\label{Estabrepeated}
(s_n;\iota_n)_* \colon H_*(C_n(M,X);T_n) \longrightarrow H_*(C_{n+1}(M,X);T_{n+1}).
\end{equation}
is induced by the map of chain complexes
\begin{equation*}
(\widetilde{s}_n)_{\sharp} \otimes \iota_n \colon S_*(U_n(M,X)) \otimes_{\bZ G_n} T_n \longrightarrow S_*(U_{n+1}(M,X)) \otimes_{\bZ G_{n+1}} T_{n+1}.
\end{equation*}

\begin{proof}[Proof of Theorem \ref{tMain} \textup{(}split-injectivity claim\textup{)}]
We want to prove that \eqref{Estabrepeated} is split-injective for all $*$ and $n$. By Dold's Lemma \ref{lDold}, it is sufficient to construct chain maps
\[
t_{k,n}\colon S_*(U_n(M,X)) \otimes_{\bZ G_n} T_n \longrightarrow S_*(U_k(M,X)) \otimes_{\bZ G_k} T_k
\]
for $1\leq k\leq n$ such that $t_{n,n} = \mathrm{id}$ and
\begin{equation}\label{Edold2}
t_{k,n} \simeq t_{k,n+1} \circ ((\widetilde{s}_n)_{\sharp} \otimes \iota_n) - ((\widetilde{s}_{k-1})_{\sharp} \otimes \iota_{k-1}) \circ t_{k-1,n}.
\end{equation}

Let $S\subseteq\{1,\ldots,n\}$. There is a unique partially-defined injection $\{1, \ldots, n\} \dashrightarrow \{1, \ldots, \scard\}$ which is order-preserving and is defined precisely on $S$. This is a morphism $n\to\scard$ in the category $\Sigma$. Let $\pi_{S,n}$ be its lift along $\cB(M,X)\to\Sigma$ to a morphism $n \to \scard$ of $\cB(M,X)$ given by travelling along the paths $p_i$ (see \S\ref{ssTCS-setup}) and keeping the labels constant. By our standard abuse of notation we will denote its image under $T$ also by $\pi_{S,n}\colon T_n \to T_{\scard}$.

We also define a map $p_{S,n}\colon U_n(M,X) \to U_{\scard}(M,X)$ as follows. Given an open-ended braid in $U_n(M,X)$, forget the strands which start at $(a_i,0)$ for $i\in \{1,\ldots,n\}\smallsetminus S$, and then concatenate this with the reverse of $\pi_{S,n} \colon n \to \scard$ to get an open-ended braid in $U_{\scard}(M,X)$.

Directly from these definitions one can check (where the notation $(S-1)$ means $\{ s-1 \,|\, s\in S\}$):
\begin{itemizeb}
\item[(a)] If $1\notin S$ then $\pi_{S,n+1}\circ \iota_n = \pi_{(S-1),n}$ and $p_{S,n+1}\circ\widetilde{s}_n \simeq p_{(S-1),n}$.
\item[(b)] If $1\in S$ then $\pi_{S,n+1}\circ \iota_n = \iota_{\lvert S\rvert -1}\circ \pi_{(S\smallsetminus\{1\}-1),n}$ and $p_{S,n+1}\circ\widetilde{s}_n = \widetilde{s}_{\lvert S\rvert -1}\circ p_{(S\smallsetminus\{1\}-1),n}$.
\end{itemizeb}
We now define $t_{k,n}$ to be the following chain map:
\begin{equation*}
\sigma \otimes x \;\mapsto\; \sum_{S\subseteq \{1,\ldots,n\},\, \scard = k} (p_{S,n})_{\sharp} (\sigma) \otimes \pi_{S,n} (x) .
\end{equation*}
Clearly $t_{n,n}=\mathrm{id}$, so we just need to check the identity \eqref{Edold2}. The right-hand side of this is:
\begin{equation}\label{Ehuge1}
\begin{split}
\sigma \otimes x \quad\mapsto\quad &\sum_{S\subseteq \{1,\ldots,n+1\},\, \scard =k} \bigl( (p_{S,n+1})_{\sharp} \circ (\widetilde{s}_n)_{\sharp} (\sigma) \bigr) \otimes \bigl( \pi_{S,n+1} \circ \iota_n (x) \bigr) \\
-&\sum_{R\subseteq \{1,\ldots,n\},\, \rcard =k-1} \bigl( (\widetilde{s}_{k-1})_{\sharp} \circ (p_{R,n})_{\sharp} (\sigma) \bigr) \otimes \bigl( \iota_{k-1} \circ \pi_{R,n} (x) \bigr) .
\end{split}
\end{equation}
Using (a) and (b) above, we see that the top line of this decomposition is chain-homotopic to:
\begin{equation}\label{Ehuge2}
\begin{split}
\sigma \otimes x \;\;\mapsto\quad &\sum_{S\subseteq \{1,\ldots,n+1\},\, \scard =k,\, 1\in S} \bigl( (\widetilde{s}_{k-1})_{\sharp} \circ (p_{(S\smallsetminus\{1\}-1),n})_{\sharp} (\sigma) \bigr) \otimes \bigl( \iota_{k-1} \circ \pi_{(S\smallsetminus\{1\}-1),n} (x) \bigr) \\
+&\sum_{S\subseteq \{1,\ldots,n+1\},\, \scard =k,\, 1\notin S} (p_{(S-1),n})_{\sharp} (\sigma) \otimes \pi_{(S-1),n} (x) .
\end{split}
\end{equation}
The first line of \eqref{Ehuge2} cancels with the second line of \eqref{Ehuge1}, leaving just the second line of \eqref{Ehuge2}, which is precisely $t_{k,n}$, as required.
\end{proof}


\phantomsection
\addcontentsline{toc}{section}{References}
\renewcommand{\bibfont}{\normalfont\small}
\setlength{\bibitemsep}{0pt}
\printbibliography

\end{document}